\numberwithin{equation}{section}
\newtheorem{theorem}{Theorem}[section]
\newtheorem{lemma}[theorem]{Lemma}
\newtheorem{corollary}[theorem]{Corollary}
\theoremstyle{definition}
\newtheorem{definition}[theorem]{Definition} 
\newtheorem{remark}[theorem]{Remark}
\newtheorem{example}[theorem]{Example}
\newtheorem{convention}[theorem]{Convention}
\title{Splittings of ideals of points in $\mathbb{P}^{1}\times\mathbb{P}^{1}$}
\author{Elena Guardo}
\address[E. Guardo]{Dipartimento di Matematica e Informatica, Viale A. Doria 6, 95125, Catania}
\email{elena.guardo@unict.it}
\author{Graham Keiper}
\address[G. Keiper]{Dipartimento di Matematica e Informatica, Viale A. Doria 6, 95125, Catania}
\email{gtrkeiper@outlook.com}
\author{Adam Van Tuyl}
\address[A. Van Tuyl]
{Department of Mathematics and Statistics
McMaster University, Hamilton, ON L8S 4L8, Canada}
\email{vantuyla@mcmaster.ca}
\keywords{splitting ideals, points in multi-projective spaces}
\subjclass[2020]{13D02, 14M05}
\date{\today}
\begin{document}

\begin{abstract}
Let $I_\mathbb{X}$ be the bihomogeneous ideal
of a finite set of points $\mathbb{X} \subseteq
\mathbb{P}^1 \times \mathbb{P}^1$.  The
purpose of this note is to consider ``splittings''
of the ideal $I_\mathbb{X}$, that is, finding ideals $J$ and $K$
such that $I_\mathbb{X} = J+K$, where $J$ and $K$ have prescribed
algebraic or geometric properties. We show that for any set
of points $\mathbb{X}$, we cannot partition
the generators of $I_\mathbb{X}$ into two ideals of points.
The best case scenario is where at most one of 
$J$ or $K$ is an ideal of points.  To remedy this we introduce the notion of unions of lines and ACM (Arithmetically
Cohen-Macaulay) points which allows us to say more about splittings.
For a set $\mathbb{W}$ of unions of lines and ACM sets of points, we can
write $I_\mathbb{W} = J + K$ where both $J$ and $K$ are ideals of unions of lines and ACM points as well.  
When $\mathbb{W}$ is a union of lines and ACM points, we  
discuss some consequences for the 
graded Betti numbers of $I_{\mathbb{W}}$ in terms of these  splittings.
\end{abstract}
\maketitle

%%%%%%%%%%%%%%%%%%%%%%%%%%%%%%%%%%%%%%%%%%%%%%%%%%%%%%%%
\section{Introduction}\label{sec:intro}

Let $I$ be a homogeneous ideal of a polynomial ring
$R = \mathbb{K}[x_1,\ldots,x_n]$ over a field
$\mathbb{K}$ of characteristic zero.  If $\mathcal{G}(I) = \{g_1,\ldots,g_t\}$ is a minimal
set of generators of $I$, we say $I = J+K$ is a {\it splitting} of $I$
if there is  subset $A \subseteq [t]:=\{1,\ldots,t\}$
such that $J = \langle g_i ~|~ i \in A \rangle$ and $K =
\langle g_i ~|~ i \in [t] \setminus A \rangle$.  In 
other words, $J$ and $K$ are formed by partitioning the set of
generators of $I$.  While it is clear that there are many ways to
``split'' a given ideal $I$, some splittings are more distinguished 
in the sense 
that the ideals $J$ and $K$ may provide algebraic or 
geometric information about $I$.  In some cases, these splittings lend themselves to inductive arguments or allow us to iteratively construct new ideals with specified properties (for example, see \cite{FKVT2020}). 

A classical example of this idea is the work of
Eliahou and Kervaire \cite{EK90} where they split a stable 
monomial ideal  $I = J+K$
into two smaller stable monomial ideals $J$ and $K$, such 
that the graded Betti numbers of $I$ can be recursively 
computed in terms of those $J, K$ and $J \cap K$.
Inspired by Eliahou and Kervaire's work, 
Francisco, H\`a, and Van Tuyl \cite{FHVT2009} introduced the notion
of a {\it Betti splitting} of $I$, a splitting $I = J+K$
so that the graded Betti numbers of
$I$ are a function of the graded Betti numbers
of $J$, $K$, and $J \cap K$.  Betti splittings have
proven to be a useful tool, as is evident from the recent papers
\cite{BH2023,CJRS2025,FHH2023}. While the paper of Francisco, H\`a and Van Tuyl \cite{FHVT2009} considered only the case that $I$ is a monomial ideal, a (Betti)
splitting does not require $I$ to be a monomial ideal.
Consequently, there has been interest in 
splittings of various
families of ideals,
such as ideals of points in $\mathbb{P}^n$ \cite{F2001,F2005,V2005} and
toric ideals \cite{FHKV2021,FKVT2020,GS2025,KT2025a,KT2025b}, as well as interest in variations of splittings, such as partial Betti splittings, which were introduced 
 to study binomial edge ideals in \cite{JSVT2025}.

The purpose of this note is to consider 
splittings of $I_\mathbb{X}$, 
the defining ideal of a set 
of points $\mathbb{X}$ in $\mathbb{P}^1 \times 
\mathbb{P}^1$. 
Ideals of sets of points in $\mathbb{P}^1 \times 
\mathbb{P}^1$ and their quotient rings, while seemingly specialized, provide a rich example of a non-trivial
family of bigraded ideals that can be studied
algebraically, geometrically, and combinatorially (for a sample of some of the literature on such ideals, see
\cite{FGM2018,GMR1992, GKLL2021,GVT2004,GVT2015,HNVT2022,M2024}).
While there has been some work done on the splittings of
ideals of sets of points in $\mathbb{P}^n$, no previous work has considered the case
of points in a multi-projective space.

If $\mathbb{X}$ is a set of points in $\mathbb{P}^{1}\times\mathbb{P}^{1}$ and $\mathcal{G}(I_{\mathbb{X}})=\{g_{1},\dots ,g_{t}\}$ is a minimal generating set of $I_{\mathbb{X}}$, a natural first question to ask is if there
is a partition of $\mathcal{G}(I_{\mathbb{X}})$ into $A\subseteq\mathcal{G}(I_{\mathbb{X}})$ and $\mathcal{G}(I_{\mathbb{X}})\backslash A$ such that 
$I_{\mathbb{X}} = I_{\mathbb{X}_1} + I_{\mathbb{X}_2}$
where $I_{\mathbb{X}_1}=\left<A\right>$, $I_{\mathbb{X}_2}=\left<\mathcal{G}(I_{\mathbb{X}})\backslash A\right>$, and $\mathbb{X}_1$ and $\mathbb{X}_2$ are sets of
points in $\mathbb{P}^1 \times \mathbb{P}^1$?
As we show in Lemma \ref{no_split}, 
the answer to this question is {\it always} no. 
At best, we are only able to
split the ideal as  $I_\mathbb{X} = I_{\mathbb{X}_1} + K$
where $\mathbb{X}_1$ is a set of points and $K$
is a bihomogeneous ideal.  We call such a splitting
a {\it point splitting}.  As shown
in Corollary \ref{cor:pointsplitting}, we prove
that every ideal $I_\mathbb{X}$ has at least one
point splitting.

The existence of a point splitting leads to some
natural follow-up questions: (1) How many point splittings can the ideal $I_\mathbb{X}$ have? and 
(2) Given a point splitting $I_\mathbb{X} = 
I_{\mathbb{X}_1}+K$, what can we say about the ideal $K$?  Under that additional
assumption that $\mathbb{X}$ is arithmetically
Cohen--Macaulay (ACM), that is, $R/I_{\mathbb{X}}$ is
a Cohen--Macaulay ring, we are able to answer 
these questions.  Theorem \ref{pointsplittheorem} 
describes how to find all the point splittings
of $I_{\mathbb{X}}$, thus answering (1).

In order to attempt answering (2) we introduce the notion of unions of lines and ACM points. Using this (slightly) more general framework we are able to obtain Corollary \ref{cor:idealK} which shows that the
ideal $K$ is the defining ideal of a union of
lines and ACM points in $\mathbb{P}^1 \times \mathbb{P}^1$ if $\mathbb{X}$ is an ACM set of points.

In this more general framework of unions of lines and ACM points we manage to obtain the desired results pertaining to Betti splittings and characterize exactly when they occur in Theorem \ref{splitting}, Theorem \ref{cuttheorem} and Corollary \ref{characterization_cor}.
One interesting result of our characterization is Corollary \ref{no_betti_point_splittings} which states that {\it no} point splitting is a Betti splitting. 

Our paper is structured as follows: In Section 
\ref{sec:background}
we recall our relevant background on ideals of sets of points in $\mathbb{P}^1 \times \mathbb{P}^1$. In Section  \ref{sec:splittings} we introduce the notion of a point splitting of $I_\mathbb{X}$, show that every ideal has at least one point splitting, and consider point splittings of $I_{\mathbb{X}}$ under the additional assumption that $\mathbb{X}$ is ACM. In Section \ref{sec:ACM} we introduce the useful framework of ideal of unions of lines and ACM points in $\mathbb{P}^{1}\times\mathbb{P}^{1}$ and apply it to showing that all splittings of ideals of unions of lines and ACM points $I=J+K$ result in $J$, $K$, and $J\cap K$ being ideals of unions of lines and ACM points. In the final section, Section \ref{sec:Betti}, we consider consequences for the graded Betti numbers of $I_{\mathbb{W}}$ where $\mathbb{W}$ is a union of lines and ACM points including the hoped for the Betti splittings.

%%%%%%%%%%%%%%%%%%%%%%%%%%%%%%%%%%%%%%%%%%%%%%%%%%%%%%%%
\section{Background}\label{sec:background}

We start by recalling the relevant background on points in
$\mathbb{P}^1 \times \mathbb{P}^1$. Let 
$R=\mathbb{C}[x_{0},x_{1},y_{0},y_{1}]$ be 
the bigraded ring  where $\deg(x_{0})=\deg(x_{1})=(1,0)$ and $\deg(y_{0})=\deg(y_{1})=(0,1)$. The ring $R$ is
the bigraded coordinate ring  of $\mathbb{P}^1 \times \mathbb{P}^1$. We follow the notation of \cite{GVT2015}; see this reference
for further details.

A point $P \in \mathbb{P}^1 \times \mathbb{P}^1$ has the
form $P = A \times B \in \mathbb{P}^1 \times \mathbb{P}^1$.
If $A=[a_0:a_1]$ and $B = [b_0:b_1]$, then the  defining ideal
of $P$ is the bihomogeneous ideal $I_P = 
\langle a_1x_0-a_0x_1,b_1y_0-b_0y_1\rangle$.
We abuse notation and adopt the convention that $H_{A}$, and respectively $V_{B}$, denotes both the horizontal ruling, respectively, the vertical ruling, and the degree $(1,0)$ form $a_1x_0-a_0x_1$, respectively the degree $(0,1)$ form $b_1y_0-b_0y_1$.
Thus given a point $P=A\times B\in\mathbb{P}^{1}\times\mathbb{P}^{1}$, its defining ideal is given by $I_P=\langle H_{A},V_{B} \rangle$ and geometrically the point, P, is given as the intersection of two lines:
$P=H_{A}\cap V_B$.   Furthermore, we will call $H$ a {\it horizontal
ruling} if $H$ is given by the zero set of a degree $(1,0)$ form,
and $V$ is a {\it vertical ruling} if $V$ is given by the
zero set of a degree $(0,1)$ form.  We sometimes refer to $H$, respectively $V$,
as a horizontal line, respectively, a vertical
line.

Let $\mathbb{X} = \{P_1,\ldots,P_s\} \subseteq \mathbb{P}^1 \times 
\mathbb{P}^1$ be a set of points.  The defining ideal
of $\mathbb{X}$ is $I_{\mathbb{X}} = \bigcap_{i=1}^s I_{P_i}$.  
Let $\pi_{1}:\mathbb{P}^{1}\times\mathbb{P}^{1}\rightarrow\mathbb{P}^{1}$ be the  projection map onto the first coordinate and let $\pi_{2}:\mathbb{P}^{1}\times\mathbb{P}^{1}\rightarrow\mathbb{P}^{1}$ be the projection map onto the second coordinate.

\begin{definition}\label{alphabeta}
    Let $\mathbb{X} \subseteq\mathbb{P}^{1}\times\mathbb{P}^{1}$ be a finite set of reduced points and suppose that $\pi_{1}(\mathbb{X})=\{A_{1},\dots ,A_{h}\}$ and 
    $\pi_{2}(\mathbb{X})=\{B_{1},\dots ,B_{v}\}$. For $i=1,\dots ,h$, set $\alpha_{i}:=|\pi_{1}^{-1}(A_{i})\cap \mathbb{X}|$, and let $\alpha_{\mathbb{X}}:=(\alpha_{1},\dots ,\alpha_{h})$. Similarly, for $j=1,\dots ,v$, set $\beta_{j}:=|\pi_{2}^{-1}(B_{j})\cap \mathbb{X}|$ and $\beta_{\mathbb{X}}:=(\beta_{1},\dots ,\beta_{v})$.
\end{definition}

In the previous definition, $\alpha_i$ counts the number
of points in $\mathbb{X}$ whose first coordinate is $A_i$ and lies 
on the horizontal ruling $H_{A_i}$.  Similarly,
$\beta_j$ is the number of points whose second coordinate
lies on the vertical ruling $V_{B_j}$.  Moving forward,
we make use of the following convention.

\begin{convention}\label{conv:relabel}
    By relabeling the horizontal and vertical rulings, we can always assume that $|\mathbb{X}\cap H_{A_{1}}|
    \ge |\mathbb{X}\cap H_{A_{2}}|\ge\cdots$ and $
    |\mathbb{X}\cap V_{B_{1}}|\ge 
    |\mathbb{X}\cap V_{B_{2}}|\ge\cdots$. That is, we can assume that the first ruling contains the greatest number of points, the second ruling the same number or less, and so on.
\end{convention}

We will primarily be interested in the following class of points:
\begin{definition}
    A set of points $\mathbb{X} \subseteq \mathbb{P}^1 \times \mathbb{P}^1$
    is {\it arithmetically Cohen-Macualay} (ACM) if
    $R/I_{\mathbb{X}}$ is a Cohen-Macualay ring. 
\end{definition}
We can determine if $\mathbb{X}$ is ACM from the 
tuples $\alpha_\mathbb{X}$ and $\beta_\mathbb{X}$.  Given
$\alpha_\mathbb{X} = (\alpha_1,\ldots,\alpha_h)$, we define the 
{\it conjugate} of $\alpha_\mathbb{X}$ to be the tuple
$\alpha_\mathbb{X}^\star = (\alpha_1^\star,\ldots,\alpha_{\alpha_1}^\star)$ where
$\alpha_j^\star = |\{\alpha_i \in \alpha : \alpha_i \geq j\}|$.  We then have the following classification:

\begin{theorem}{\cite[Theorem 4.11]{GVT2015}}\label{thm.acmclassify}
Let $\mathbb{X}$ be a set of points in $\mathbb{P}^1 \times 
\mathbb{P}^1$.  Then $\mathbb{X}$ is ACM if and only if
$\alpha_\mathbb{X}^\star = \beta_\mathbb{X}$.
\end{theorem}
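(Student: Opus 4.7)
The plan is to prove the equivalence by translating the algebraic ACM condition, namely $\depth R/I_\mathbb{X} = \dim R/I_\mathbb{X} = 2$, into the combinatorial Young-diagram condition $\alpha_\mathbb{X}^\star = \beta_\mathbb{X}$. The geometric content is that, under Convention~\ref{conv:relabel}, the equality $\alpha_\mathbb{X}^\star = \beta_\mathbb{X}$ is exactly the condition that $\mathbb{X}$ fits as a Young-diagram configuration $\mathbb{X} = \{A_i \times B_j \mid 1 \leq i \leq h,\ 1 \leq j \leq \alpha_i\}$, while any violation forces a \emph{staircase defect} where a point is absent from a position it would occupy in such a diagram. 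Both directions are then handled via generic hyperplane sections by bihomogeneous forms of bidegrees $(1,0)$ and $(0,1)$.

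For the sufficiency direction I would induct on the number of horizontal rulings $h$. The base case $h = 1$ is the complete intersection $I_\mathbb{X} = \langle H_{A_1},\ V_{B_1}\cdots V_{B_{\alpha_1}}\rangle$, which is ACM. For the inductive step, remove the last row $\mathbb{Y} := \pi_1^{-1}(A_h) \cap \mathbb{X}$ to obtain $\mathbb{X}' := \mathbb{X} \setminus \mathbb{Y}$ with $h-1$ rows; a short bookkeeping check using $\alpha_h \leq \alpha_i$ for all $i$ yields $\alpha_{\mathbb{X}'}^\star = \beta_{\mathbb{X}'}$, so by induction $\mathbb{X}'$ is ACM, and $\mathbb{Y}$ is a complete intersection. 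Since $\mathbb{X}'$ and $\mathbb{Y}$ are disjoint one has $I_\mathbb{X} = I_{\mathbb{X}'} \cap I_\mathbb{Y}$ and the Mayer--Vietoris sequence
\[
0 \to R/I_\mathbb{X} \to R/I_{\mathbb{X}'} \oplus R/I_{\mathbb{Y}} \to R/(I_{\mathbb{X}'} + I_\mathbb{Y}) \to 0.
\]
The Depth Lemma then forces $\depth R/I_\mathbb{X} \geq 2$ provided $\depth R/(I_{\mathbb{X}'} + I_\mathbb{Y}) \geq 1$, which in turn follows from a direct analysis since $I_{\mathbb{X}'} + I_\mathbb{Y}$ is supported on the horizontal ruling $H_{A_h}$.

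For the necessity direction I would argue contrapositively: if $\alpha_\mathbb{X}^\star \neq \beta_\mathbb{X}$, then after applying Convention~\ref{conv:relabel} there must exist indices $i_1 < i_2$ and $j_1 < j_2$ with $A_{i_1} \times B_{j_2},\ A_{i_2} \times B_{j_1} \in \mathbb{X}$ but $A_{i_1} \times B_{j_1} \notin \mathbb{X}$. One can then produce a nontrivial socle element in a generic Artinian reduction $R/(I_\mathbb{X}, L_1, L_2)$, for linear forms $L_1, L_2$ of bidegrees $(1,0)$ and $(0,1)$, built from the forms $H_{A_{i_1}}$ and $V_{B_{j_1}}$ together with the local data at the defect; this witnesses $\depth R/I_\mathbb{X} \leq 1$. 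A parallel (and arguably cleaner) approach is to compare the bigraded Hilbert series of $R/I_\mathbb{X}$ with the series it would have under the ACM hypothesis: a regular-sequence reduction forces the ACM Hilbert series to be determined by $\alpha_\mathbb{X}$ via its conjugate, and equating this combinatorial expression with the series read off from $\beta_\mathbb{X}$ unravels to the identity $\alpha_\mathbb{X}^\star = \beta_\mathbb{X}$.

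The main obstacle will be the necessity direction: converting a purely combinatorial staircase defect into a concrete algebraic obstruction (a socle element or a Hilbert-series identity) requires a careful choice of generic forms and patient tracking of how the defect persists after hyperplane section. The sufficiency direction, by contrast, is a clean induction once the Young-diagram interpretation of the conjugate condition is made explicit, with the only real work being the depth bound on $R/(I_{\mathbb{X}'} + I_\mathbb{Y})$.
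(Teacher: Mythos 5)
The paper does not prove this statement: it is imported verbatim as a citation to \cite[Theorem 4.11]{GVT2015}, so there is no in-paper proof to compare against. What you have written is a from-scratch proof sketch, and the right question is whether it holds water on its own.

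\textbf{Sufficiency.} Your inductive argument is essentially sound, but it silently rests on a nontrivial combinatorial rigidity fact that you should state and justify: under Convention~\ref{conv:relabel}, the equality $\alpha_\mathbb{X}^\star = \beta_\mathbb{X}$ forces $\mathbb{X}$ to be, up to relabeling rows and columns, \emph{exactly} the Ferrers configuration $\{A_i\times B_j : 1\le j\le \alpha_i\}$. This is the ``tight'' case of Gale--Ryser: when the column-sum partition equals the conjugate of the row-sum partition, the $0$--$1$ incidence matrix is unique, and it is the Ferrers matrix. Without this, the ``short bookkeeping check'' that $\alpha_{\mathbb{X}'}^\star=\beta_{\mathbb{X}'}$ after deleting the bottom row is not automatic. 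Granting it, the rest goes through: $\mathbb{X}'$ is ACM by induction, $\mathbb{Y}$ is a complete intersection $\langle H_{A_h}, V_{B_1}\cdots V_{B_{\alpha_h}}\rangle$, and the Mayer--Vietoris sequence with the Depth Lemma reduces everything to $\depth R/(I_{\mathbb{X}'}+I_\mathbb{Y})\ge 1$. You should actually carry out this last depth computation rather than wave at it: reducing modulo $H_{A_h}$ and using $\alpha_h\le\alpha_i$ for all $i$ shows that $I_{\mathbb{X}'}+I_\mathbb{Y}$ collapses to the complete intersection $\langle H_{A_h},\, H_{A_1}\cdots H_{A_{h-1}},\, V_{B_1}\cdots V_{B_{\alpha_h}}\rangle$, so the quotient is Cohen--Macaulay of dimension $1$ and the bound holds.

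\textbf{Necessity.} This is where the proposal is genuinely incomplete. The existence of the ``staircase defect'' when $\alpha_\mathbb{X}^\star\ne\beta_\mathbb{X}$ again relies on the Gale--Ryser rigidity in contrapositive form and is fine once that is proved, but the construction of a socle element in a generic Artinian reduction is only gestured at: ``built from the forms $H_{A_{i_1}}$ and $V_{B_{j_1}}$ together with the local data at the defect'' is not a proof, and making it one is the entire difficulty of the direction. The alternative Hilbert-series route you propose is closer in spirit to the actual argument in \cite{GVT2015}, which proceeds via the bigraded Hilbert function (and its first difference) rather than via socle elements, but your description of how to ``equate the combinatorial expression with the series read off from $\beta_\mathbb{X}$'' is too vague to assess. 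As written, the forward implication is a plan, not a proof.

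In short: the approach is viable and is a reasonable alternative to the Hilbert-function argument of \cite{GVT2015}, but to stand as a proof it needs (i) an explicit statement and proof of the tight Gale--Ryser rigidity, (ii) the computation showing $\depth R/(I_{\mathbb{X}'}+I_\mathbb{Y})\ge 1$, and above all (iii) a concrete construction of the socle element (or a worked-out Hilbert-series comparison) in the necessity direction.
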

\noindent

\begin{remark}\label{ferrersdiagram}
Informally the above result implies that if the set of points resembles a Ferrer's diagram, then the set of points is ACM.
In particular, given any partition $\alpha = (\alpha_1,\ldots,\alpha_h)$,  $h$ distinct points $A_1,\ldots,A_{h}$
points in $\mathbb{P}^1$, and $\alpha_1$ distinct points
$B_1,\ldots,B_{\alpha_1}$ in $\mathbb{P}^1$, the set
of points \[\{A_i \times B_j : 1 \leq i \leq h, ~1 \leq j \leq \alpha_i\}\] is an ACM set of points in $\mathbb{P}^1 \times \mathbb{P}^1$.
\end{remark}

If $\mathbb{X}$ is ACM, we can describe both the bigraded minimal
free resolution of $I_{\mathbb{X}}$ and the minimal
set of generators using the tuple $\alpha_\mathbb{X}$. 
Note that because of Convention \ref{conv:relabel}, we can
always assume that the entries of $\alpha_\mathbb{X}$ 
and $\beta_\mathbb{X}$ are 
non-increasing.  In fact, both 
$\alpha_\mathbb{X}$ and $\beta_\mathbb{X}$ are integer
partitions of $|\mathbb{X}|$.  We first require the following
definitions:

\begin{definition}\label{bigraded_res_def}
    For any bihomogenous ideal $I\subset R$ there is an associated {\it bigraded minimal free resolution}, that is, an exact sequence of bigraded homomorphsisms of the form
    \[0\rightarrow\bigoplus_{\delta\in\mathbb{N}^{2}}R(-\delta)^{\beta_{i,\delta}(I)}\rightarrow \cdots\rightarrow\bigoplus_{\delta\in\mathbb{N}^{2}}R(-\delta)^{\beta_{1,\delta}(I)}\rightarrow\bigoplus_{\delta\in\mathbb{N}^{2}}R(-\delta)^{\beta_{0,\delta}(I)}\rightarrow I\rightarrow 0\]
    where the summands $R(-\delta)$ are mapped to distinct minimal generators of the preceding module (since $R$ is Noetherian each module is inductively finitely generated so the minimal number of generators is well defined). The $(-\delta)$ represents a shift in the bigrading by $\delta$, that is, we take $1\in R(-\delta)$ to have bidegree $\delta$. The $\beta_{i,\delta}(I)$ are the {\it bigraded Betti numbers of $I$} where $i$ is the {\it homological degree} and $\delta$ is the {\it bidegree}. 
\end{definition}

\begin{definition}\label{CXVX}
Let $\mathbb{X}$ be a set of points in $\mathbb{P}^{1}\times\mathbb{P}^{1}$ and $\alpha_{\mathbb{X}}$ as in Definition \ref{alphabeta}.  Define
\begin{itemize}
       \item $C_{\mathbb{X}}=\{(h,0),(0,\alpha_{1})\}\cup\{(i-1,\alpha_{i}):\alpha_{i}<\alpha_{i-1}\}$, and 
       \item $V_{\mathbb{X}}=\{(h,\alpha_{h})\}\cup\{(i-1,\alpha_{i-1}):\alpha_{i}<\alpha_{i-1}\}$. 
\end{itemize}
\end{definition}

\begin{theorem}{\cite[Theorem 5.2]{GVT2015}}\label{resolution}
    Suppose that $\mathbb{X}$ is an ACM set of points in $\mathbb{P}^{1}\times\mathbb{P}^{1}$ with $\alpha_{\mathbb{X}}=(\alpha_{1},\dots ,\alpha_{h})$. Let $C_{\mathbb{X}}$ and $V_{\mathbb{X}}$ be as in Definition \ref{CXVX}. Then the bigraded minimal free resolution of $I_{\mathbb{X}}$ has the form 

    \[0\longrightarrow\bigoplus_{(v_{1},v_{2})\in V_{\mathbb{X}}}R(-v_{1},-v_{2})\longrightarrow\bigoplus_{(c_{1},c_{2})\in C_{\mathbb{X}}}R(-c_{1},-c_{2})\longrightarrow I_{\mathbb{X}}\longrightarrow 0.\]
\end{theorem}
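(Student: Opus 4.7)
The plan is to combine the Auslander-Buchsbaum formula and Hilbert-Burch theorem, both made available by the ACM hypothesis, with an explicit construction of generators and syzygies from the Ferrers structure in Remark \ref{ferrersdiagram}.

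Since $\dim R = 4$ and $\dim R/I_\mathbb{X} = 2$, the ACM hypothesis yields $\depth(R/I_\mathbb{X}) = 2$, so Auslander-Buchsbaum gives $\pdim(R/I_\mathbb{X}) = 2$. The minimal bigraded free resolution of $I_\mathbb{X}$ therefore has the form $0 \to F_1 \to F_0 \to I_\mathbb{X} \to 0$, and Hilbert-Burch (applicable since $R/I_\mathbb{X}$ is Cohen-Macaulay of codimension $2$) forces $\rank F_1 = \rank F_0 - 1$.

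Next I would exhibit one minimal generator for each element of $C_\mathbb{X}$. By Theorem \ref{thm.acmclassify} and Convention \ref{conv:relabel}, we may write $\mathbb{X} = \{A_i \times B_j : 1 \le i \le h,\ 1 \le j \le \alpha_i\}$. The products $H_{A_1}\cdots H_{A_h}$ and $V_{B_1}\cdots V_{B_{\alpha_1}}$ lie in $I_\mathbb{X}$ and account for the corners $(h, 0)$ and $(0, \alpha_1)$. For each step $\alpha_i < \alpha_{i-1}$, set $F_i := H_{A_1} \cdots H_{A_{i-1}} V_{B_1} \cdots V_{B_{\alpha_i}}$, a form of bidegree $(i-1, \alpha_i)$; a short case check shows $F_i \in I_\mathbb{X}$, because any point $A_k \times B_l \in \mathbb{X}$ with $k < i$ is killed by the factor $H_{A_k}$, while if $k \ge i$ then $l \le \alpha_k \le \alpha_i$ and so $V_{B_l}$ is one of the vertical factors. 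To confirm these $|C_\mathbb{X}|$ forms minimally generate $I_\mathbb{X}$, I would compare the Hilbert function of the ideal they generate with the Hilbert function of $I_\mathbb{X}$ (the latter being determined by $\alpha_\mathbb{X}$ in the ACM case), organizing the argument as an induction on $h$ that peels off the ruling $H_{A_h}$.

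Finally, the syzygies arise from Koszul-type relations between consecutive corners of $C_\mathbb{X}$ (ordered by first coordinate): multiplying each of two consecutive generators by the complementary product of horizontal, respectively vertical, rulings produces the same monomial, giving a syzygy whose bidegree is the componentwise maximum of the two corner bidegrees. Because $\alpha_\mathbb{X}$ is constant between successive step indices, this maximum simplifies to $(i-1, \alpha_{i-1})$ for the syzygy at step $i$, and to $(h, \alpha_h)$ for the final syzygy linking the last step corner with $(h, 0)$. This yields $|V_\mathbb{X}| = |C_\mathbb{X}| - 1$ syzygies matching exactly the Hilbert-Burch count, so they form a minimal basis of $F_1$ with the stated bidegrees. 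The main obstacle I expect is the combinatorial bookkeeping ensuring the bidegrees match $V_\mathbb{X}$ precisely, and in particular ruling out accidental cancellations or redundancies when $\alpha_\mathbb{X}$ has repeated entries.
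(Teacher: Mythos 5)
The paper does not prove this theorem: it is imported verbatim from Guardo--Van Tuyl \cite[Theorem~5.2]{GVT2015}, so there is no internal argument to compare yours against. Assessed on its own, your Auslander--Buchsbaum/Hilbert--Burch scaffold is the natural route, and your combinatorial bookkeeping is correct. The forms $H_{A_1}\cdots H_{A_{i-1}}V_{B_1}\cdots V_{B_{\alpha_i}}$ (together with $H_{A_1}\cdots H_{A_h}$ and $V_{B_1}\cdots V_{B_{\alpha_1}}$) do lie in $I_{\mathbb{X}}$ with bidegrees exactly $C_{\mathbb{X}}$, your membership check is right, and the lcm of consecutive corners (ordered by first coordinate) collapses to $(i-1,\alpha_{i-1})$, respectively $(h,\alpha_h)$, precisely because $\alpha_{\mathbb{X}}$ is constant between successive drops --- so the proposed syzygy bidegrees match $V_{\mathbb{X}}$.

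Two steps, however, remain genuinely open. First, the Hilbert-function induction showing that these $|C_{\mathbb{X}}|$ forms actually minimally generate $I_{\mathbb{X}}$ is announced but not carried out; either execute the peel-off-a-ruling induction or appeal to Theorem~\ref{gens}, which the paper already records. Second, and more substantively, exhibiting $|V_{\mathbb{X}}|=|C_{\mathbb{X}}|-1$ Koszul-type syzygies with the right bidegrees does not by itself show that they span the first syzygy module: a free submodule of a free module of the same rank need not be the whole module (its cokernel can be a nonzero torsion module), so ``the count matches Hilbert--Burch'' is not a proof of minimality or of exactness. The clean way to close this is to reverse the logic: write down the $(|C_{\mathbb{X}}|-1)\times|C_{\mathbb{X}}|$ bidiagonal matrix $M$ whose two nonzero entries in each row are the complementary products of rulings, verify that the signed maximal minors of $M$ are (up to scalar) your listed generators, and check that $I_{|C_{\mathbb{X}}|-1}(M)$ has codimension $2$ (clear, since it vanishes only on the finite set $\mathbb{X}$). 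The Hilbert--Burch theorem (in its Buchsbaum--Eisenbud exactness form) then guarantees that $0\to R^{|C_{\mathbb{X}}|-1}\xrightarrow{M} R^{|C_{\mathbb{X}}|}\to I_{\mathbb{X}}\to 0$ is exact, which is exactly the displayed resolution. With that patch, your approach goes through.
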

\noindent
The minimal free resolution of $I_{\mathbb{X}}$ when $\mathbb{X}$ is ACM
was also described in \cite{GMR1992}, but not using
the tuple $\alpha_\mathbb{X}$.

In the statement
below $\beta_i(I_{\mathbb{X}})$ denotes the total
Betti numbers of $I_{\mathbb{X}}$, that is
$\beta_i(I_{\mathbb{X}}) = \sum_{\delta \in \mathbb{N}^2} \beta_{i,\delta}(I_{\mathbb{X}})$. We have the following 
corollary:

\begin{corollary}\label{b1b2}
    Suppose that $\mathbb{X}$ is an ACM set of points in $\mathbb{P}^{1}\times\mathbb{P}^{1}$ with $\alpha_{\mathbb{X}}=(\alpha_{1},\dots ,\alpha_{h})$. Then the total Betti numbers of $I_{\mathbb{X}}$ satisfy: \[\beta_{0}(I_{\mathbb{X}})=
    \beta_{1}(I_{\mathbb{X}})+1.\]
\end{corollary}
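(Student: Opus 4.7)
The plan is to extract the total Betti numbers directly from the explicit bigraded resolution given in Theorem \ref{resolution} and then compare cardinalities of the shift sets $C_{\mathbb{X}}$ and $V_{\mathbb{X}}$ from Definition \ref{CXVX}. Since the resolution
\[0\longrightarrow\bigoplus_{(v_{1},v_{2})\in V_{\mathbb{X}}}R(-v_{1},-v_{2})\longrightarrow\bigoplus_{(c_{1},c_{2})\in C_{\mathbb{X}}}R(-c_{1},-c_{2})\longrightarrow I_{\mathbb{X}}\longrightarrow 0\]
is minimal, we have $\beta_{0}(I_{\mathbb{X}})=|C_{\mathbb{X}}|$ and $\beta_{1}(I_{\mathbb{X}})=|V_{\mathbb{X}}|$. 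So the task reduces to showing $|C_{\mathbb{X}}|=|V_{\mathbb{X}}|+1$.

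Let $S=\{i\in\{2,\dots ,h\}:\alpha_{i}<\alpha_{i-1}\}$. Then by Definition \ref{CXVX}, $C_{\mathbb{X}}=\{(h,0),(0,\alpha_{1})\}\cup\{(i-1,\alpha_{i}):i\in S\}$ and $V_{\mathbb{X}}=\{(h,\alpha_{h})\}\cup\{(i-1,\alpha_{i-1}):i\in S\}$. The main step is to verify that in each of these unions all elements are pairwise distinct, so that $|C_{\mathbb{X}}|=2+|S|$ and $|V_{\mathbb{X}}|=1+|S|$. For $C_{\mathbb{X}}$: the two distinguished elements $(h,0)$ and $(0,\alpha_{1})$ differ in both coordinates (assuming $\mathbb{X}\ne\emptyset$, so $h\ge 1$ and $\alpha_{1}\ge 1$); the indexed elements $(i-1,\alpha_{i})$ for $i\in S$ have distinct first coordinates, so they are mutually distinct; and none of them can equal $(h,0)$ (since $i\le h$ forces $i-1<h$) or $(0,\alpha_{1})$ (since $i\ge 2$ forces $i-1\ge 1$). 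The analogous check for $V_{\mathbb{X}}$ is identical: the elements $(i-1,\alpha_{i-1})$ for $i\in S$ have distinct first coordinates among $\{1,\dots ,h-1\}$, none of which equals $h$, so none coincides with $(h,\alpha_{h})$.

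Granting these distinctness checks, the claim follows immediately:
\[\beta_{0}(I_{\mathbb{X}})-\beta_{1}(I_{\mathbb{X}})=|C_{\mathbb{X}}|-|V_{\mathbb{X}}|=(2+|S|)-(1+|S|)=1.\]
The only mildly subtle point is the distinctness verification, which is really just bookkeeping on the index set $S$; there is no serious obstacle. I would also briefly note the degenerate case where $\mathbb{X}$ is a single point (so $h=1$ and $S=\emptyset$) to confirm that $|C_{\mathbb{X}}|=2$, $|V_{\mathbb{X}}|=1$, matching the well-known resolution of the defining ideal of a point in $\mathbb{P}^{1}\times\mathbb{P}^{1}$.
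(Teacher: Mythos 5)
Your argument is correct and follows the same route as the paper: read off $\beta_{0}(I_{\mathbb{X}})=|C_{\mathbb{X}}|$ and $\beta_{1}(I_{\mathbb{X}})=|V_{\mathbb{X}}|$ from Theorem \ref{resolution} and compare cardinalities via Definition \ref{CXVX}. You simply spell out the distinctness bookkeeping that the paper leaves implicit.
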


\begin{proof}
    This is immediate from Theorem \ref{resolution} and Definition \ref{CXVX} because
    $\beta_0(I_{\mathbb{X}})=|C_\mathbb{X}|$ and $\beta_1(I_{\mathbb{X}}) = |V_\mathbb{X}|$.
\end{proof}

We also have the following description of the 
generators of $I_{\mathbb{X}}$ when $\mathbb{X}$ is ACM.

\begin{theorem}{\cite[Corollary 5.6]{GVT2004}}\label{gens}
    Suppose that $\mathbb{X}$ is an ACM set of points in $\mathbb{P}^{1}\times\mathbb{P}^{1}$ with $\alpha_{\mathbb{X}}=(\alpha_{1},\dots ,\alpha_{h})$. Let $H_{A_{1}},\dots ,H_{A_{h}}$ denote the horizontal rulings and let $V_{B_{1}},\dots ,V_{B_{v}}$ denote the vertical rulings which minimally contain $\mathbb{X}$. Then a minimal bihomogeneous set of generators of $I_{\mathbb{X}}$ is given by  \[\{H_{A_{1}}\cdots H_{A_{h}},V_{B_{1}}\cdots V_{B_{v}}\}\cup \{H_{A_{1}}\cdots H_{A_{i}}\cdot V_{B_{1}}\cdots V_{B_{\alpha_{i+1}}}:\alpha_{i+1}<\alpha_{i}\}.\]
\end{theorem}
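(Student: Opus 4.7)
The plan is to proceed in three steps: verify each listed element lies in $I_{\mathbb{X}}$, match the bidegrees and count against the resolution in Theorem \ref{resolution}, and then upgrade containment to equality of ideals.

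\emph{Step 1 (Vanishing).} By Remark \ref{ferrersdiagram} and Convention \ref{conv:relabel} I may assume
$\mathbb{X}=\{A_{i}\times B_{j} : 1\le i\le h,\ 1\le j\le \alpha_{i}\}$. Note first that $v=\alpha_{1}$: since $\mathbb{X}$ is ACM, Theorem \ref{thm.acmclassify} gives $\beta_{\mathbb{X}}=\alpha_{\mathbb{X}}^{\star}$, and $\alpha_{\mathbb{X}}^{\star}$ has exactly $\alpha_{1}$ entries. Now fix $P=A_{i}\times B_{j}\in\mathbb{X}$. The product $H_{A_{1}}\cdots H_{A_{h}}$ vanishes at $P$ via the factor $H_{A_{i}}$, and since $j\le \alpha_{i}\le \alpha_{1}=v$ the product $V_{B_{1}}\cdots V_{B_{v}}$ vanishes at $P$ via $V_{B_{j}}$. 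For a mixed candidate $H_{A_{1}}\cdots H_{A_{k}}\cdot V_{B_{1}}\cdots V_{B_{\alpha_{k+1}}}$ (with $\alpha_{k+1}<\alpha_{k}$) there are two cases: if $i\le k$ then $H_{A_{i}}$ is among the factors, and if $i>k$ then monotonicity of $\alpha_{\mathbb{X}}$ yields $j\le \alpha_{i}\le \alpha_{k+1}$, so $V_{B_{j}}$ is among the factors. In every case the candidate vanishes on all of $\mathbb{X}$, hence lies in $I_{\mathbb{X}}$.

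\emph{Step 2 (Bidegrees and count).} The listed generators have bidegrees $(h,0)$, $(0,\alpha_{1})$, and $(k,\alpha_{k+1})$ for each $k$ with $\alpha_{k+1}<\alpha_{k}$. Reindexing $i=k+1$ shows these bidegrees form exactly the set $C_{\mathbb{X}}$ of Definition \ref{CXVX}. As the bidegrees are pairwise distinct, the proposed list has cardinality $|C_{\mathbb{X}}|=\beta_{0}(I_{\mathbb{X}})$ by Theorem \ref{resolution}. Consequently, once we know the listed elements generate $I_{\mathbb{X}}$, minimality is automatic, since the count matches the minimal number of generators and each bidegree is hit by a single element.

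\emph{Step 3 (Generation).} Let $J$ be the ideal generated by the listed elements; Step 1 gives $J\subseteq I_{\mathbb{X}}$, and I expect promoting this containment to equality to be the main obstacle. The cleanest route is a bigraded Hilbert series comparison: compute the Hilbert series of $R/J$ directly, using that $H_{A_{1}}\cdots H_{A_{h}}$ and $V_{B_{1}}\cdots V_{B_{v}}$ form a regular sequence on $R$ (one involves only the $x$-variables, the other only the $y$-variables), then quotient by the remaining mixed generators one step at a time, and match against the Hilbert series of $R/I_{\mathbb{X}}$, which for an ACM set of points in $\mathbb{P}^{1}\times\mathbb{P}^{1}$ is determined by $\alpha_{\mathbb{X}}$. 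An alternative is induction on $|\mathbb{X}|$: strip off a corner point $P=A_{i}\times B_{\alpha_{i}}$ (with $i$ chosen so that $\alpha_{i}>\alpha_{i+1}$, where $\alpha_{h+1}:=0$), so that $\mathbb{X}\setminus\{P\}$ remains ACM with $\alpha_{\mathbb{X}\setminus\{P\}}$ obtained by decrementing the $i$-th entry of $\alpha_{\mathbb{X}}$, and then combine the inductive hypothesis with the exact sequence $0\to R/I_{\mathbb{X}}\to R/I_{\mathbb{X}\setminus\{P\}}\oplus R/I_{P}\to R/(I_{\mathbb{X}\setminus\{P\}}+I_{P})\to 0$. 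Either way, the delicate bookkeeping is to track that the mixed generator at bidegree $(k,\alpha_{k+1})$ appears precisely when the partition $\alpha_{\mathbb{X}}$ has a step between $\alpha_{k}$ and $\alpha_{k+1}$.
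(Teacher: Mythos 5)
The paper does not prove this result; it cites \cite[Corollary 5.6]{GVT2004}, so there is no in-paper proof to compare against. Evaluating your proposal on its own terms: Steps 1 and 2 are solid, but Step 3 is a plan rather than a proof, and both routes sketched there have genuine gaps as stated.

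Two concrete issues. First, at the very start you reduce ``WLOG'' to the Ferrers configuration $\mathbb{X}=\{A_i\times B_j: j\le\alpha_i\}$. Remark \ref{ferrersdiagram} gives only one implication (Ferrers shape $\Rightarrow$ ACM); the reverse (ACM $\Rightarrow$ Ferrers shape after relabeling) is the nontrivial content of the classification. It is true, via a Gale--Ryser--type uniqueness argument showing that a $0$-$1$ matrix with row sums $\alpha$ and column sums $\alpha^{\star}$ must be the Ferrers matrix, but you should at least cite or prove it rather than treat it as free. Second, in Step 3 the Hilbert series approach as written does not go through: the mixed generators are not regular elements on $R/\langle H_{A_1}\cdots H_{A_h},\,V_{B_1}\cdots V_{B_v}\rangle$ (that quotient already has the same dimension as $R/I_\mathbb{X}$), so ``quotient one step at a time'' does not yield the Hilbert series by simple division; you would need to know the syzygies, which is circular without further input. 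The inductive alternative via the exact sequence for $I_{\mathbb{X}}=I_{\mathbb{X}\setminus\{P\}}\cap I_P$ is more promising, but you stop at naming the sequence without extracting generators from it; the bookkeeping you flag as ``delicate'' is precisely the content that is missing. In short, Steps 1--2 establish $J\subseteq I_\mathbb{X}$ and match the Betti count, but equality of ideals is not established.
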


\begin{remark}\label{unique_gens}
    One can see that this set of minimal generators is unique up to choice of coefficients in $\mathbb{C}^{*}$. Theorem \ref{resolution} tells us that the generators have specified bi-degrees. If we wished to obtain a different minimal generator of degree, say, $(r-1,\alpha_{r})$, we would need to construct it as a linear combination of generators with bi-degrees $(i,j)$ where $i\le r-1$ and $j\le\alpha_{r}$. However the only generator that satisfies this property is the one of degree $(r-1,\alpha_{r})$ given by Theorem \ref{gens} and so can differ only by a multiple of an element of $\mathbb{C}^{*}$. 
\end{remark}

\begin{remark}\label{drops}
    In Theorem \ref{gens}, some of the generators 
    of $I_\mathbb{X}$ depend upon the location
    of the ``drops'' in $\alpha_X$.  More precisely,
    suppose $\alpha_\mathbb{X} = (\alpha_1,\ldots,\alpha_h)$.
    We let $\{j_1,\ldots,j_r\} \subseteq \{1,\ldots,h\}$
    denote the indices where $\alpha_{{j_i}+1} < \alpha_{j_i}$.   Each ``drop'' corresponds to 
    a generator of the form $H_{A_1}\cdots H_{A_{j_i}}
    V_{B_1}\cdots V_{B_{\alpha_{j_i+1}}}$.  

    Note that we can rewrite $\alpha_{\mathbb{X}}$
    in terms of these drops.  More formally,
    if $\{j_1,\ldots,j_r\}$ are the location
    of the drops, we have
    $$\alpha_\mathbb{X} = 
    (\underbrace{\alpha_1,\ldots,\alpha_1}_{j_1},
    \underbrace{\alpha_{j_1+1},\ldots,\alpha_{j_1+1}}_{j_2-j_1},\alpha_{j_2+1},\ldots,
    \underbrace{\alpha_{j_r+1},\ldots,\alpha_{j_r+1}}_{h-j_r}).$$
\end{remark}

\begin{example}\label{runex}
Consider the following ACM configuration of 20 points in $\mathbb{P}^{1}\times\mathbb{P}^{1}$ given below:
{\small\[
\begin{tikzpicture}[scale =.5]

\draw[fill=black](0,0) circle (5 pt);
\draw[fill=black](1,0) circle (5 pt);
\draw[fill=black](2,0) circle (5 pt);
\draw[fill=black](3,0) circle (5 pt);
\draw[fill=black](4,0) circle (5 pt);
\draw[densely dotted] (-.5,0) -- (4.5,0);

\draw[fill=black](0,-1) circle (5 pt);
\draw[fill=black](1,-1) circle (5 pt);
\draw[fill=black](2,-1) circle (5 pt);
\draw[fill=black](3,-1) circle (5 pt);
\draw[densely dotted] (-.5,-1) -- (4.5,-1);

\draw[fill=black](0,-2) circle (5 pt);
\draw[fill=black](1,-2) circle (5 pt);
\draw[fill=black](2,-2) circle (5 pt);
\draw[densely dotted] (-.5,-2) -- (4.5,-2);

\draw[fill=black](0,-3) circle (5 pt);
\draw[fill=black](1,-3) circle (5 pt);
\draw[fill=black](2,-3) circle (5 pt);
\draw[densely dotted] (-.5,-3) -- (4.5,-3);

\draw[fill=black](0,-4) circle (5 pt);
\draw[fill=black](1,-4) circle (5 pt);
\draw[densely dotted] (-.5,-4) -- (4.5,-4);

\draw[fill=black](0,-5) circle (5 pt);
\draw[fill=black](1,-5) circle (5 pt);
\draw[densely dotted] (-.5,-5) -- (4.5,-5);

\draw[fill=black](0,-6) circle (5 pt);
\draw[densely dotted] (-.5,-6) -- (4.5,-6);

\draw[densely dotted] (0,.5) -- (0,-6.5);
\draw[densely dotted] (1,.5) -- (1,-6.5);
\draw[densely dotted] (2,.5) -- (2,-6.5);
\draw[densely dotted] (3,.5) -- (3,-6.5);
\draw[densely dotted] (4,.5) -- (4,-6.5);

\node at (0,.5) [above]{$V_1$};
\node at (1,.5) [above]{$V_2$};
\node at (2,.5) [above]{$V_3$};
\node at (3,.5) [above]{$V_4$};
\node at (4,.5) [above]{$V_5$};

\node at (-.5,0) [left]{$H_1$};
\node at (-.5,-1) [left]{$H_2$};
\node at (-.5,-2) [left]{$H_3$};
\node at (-.5,-3) [left]{$H_4$};
\node at (-.5,-4) [left]{$H_5$};
\node at (-.5,-5) [left]{$H_6$};
\node at (-.5,-6) [left]{$H_7$};
\end{tikzpicture}
\]}
For this set of points, $\alpha_{\mathbb{X}} = (5,4,3,3,2,2,1)$ and
$\beta_\mathbb{X} = (7,6,4,2,1)$.  A direct computation
will show that $\alpha_\mathbb{X}^\star = 
\beta_\mathbb{X}$, so 
$\mathbb{X}$ is indeed an ACM set of points by Theorem 
\ref{thm.acmclassify}.  

From $\alpha_\mathbb{X}$, we see that $\alpha_2 < \alpha_1$,
$\alpha_3 < \alpha_2$, $\alpha_5 <\alpha_4$, 
and $\alpha_7 < \alpha_6$.  Thus, by Theorem 
\ref{gens}, the generators of $I_\mathbb{X}$ are given
by
\begin{align*}
    I_{\mathbb{X}} =\langle&V_{1}V_{2}V_{3}V_{4}V_{5},
    H_{1}H_{2}H_{3}H_{4}H_{5}H_{6}H_{7},
    H_{1}V_{1}V_{2}V_{3}V_{4}, H_{1}H_{2}V_{1}V_{2}V_{3},\\ &H_{1}H_{2}H_{3}H_{4}V_{1}V_{2},  
    H_{1}H_{2}H_{3}H_{4}H_{5}H_{6}V_{1}\rangle.
\end{align*}
Note that for this $\alpha_\mathbb{X}$, the location
of the drops are $\{1,2,4,6\}$.  These four
drops each correspond to the last four generators 
of $I_\mathbb{X}$ listed above.
\end{example}

While not a direct converse of Theorem
\ref{gens}, if we are given the
generators of an ideal which resemble
the generators of Theorem \ref{gens},
we can prove that the ideal it
defines is an ideal of ACM set of points.

\begin{lemma}\label{gens=>acm}
Let $R = \mathbb{C}[x_0,x_1,y_0,y_1]$.
Fix a positive integer $r > 0$ and
integers $0 < t_1 < t_2 < \cdots < t_r$
and $0 < s_1 < s_2 < \ldots < s_r$.  
Let $H_1,\ldots,H_{t_r}$ be $t_r$ forms of degree $(1,0)$ in $R$ distinct up to multiplication in $\mathbb{C}^{*}$
and let $V_1,\ldots,V_{s_r}$ be $s_r$ forms of degree $(0,1)$ in
$R$ distinct up to multiplication by $\mathbb{C}^{*}$. Consider the ideal $I$ generated by
\footnotesize
\[
\{H_1\cdots H_{t_r},
H_1\cdots H_{t_{r-1}}V_{1}\cdots V_{s_1},
H_1\cdots H_{t_{r-2}}V_1\cdots V_{s_2}, 
\dots, H_1\cdots H_{t_1}V_1\cdots V_{s_{r-1}},
V_1\cdots V_{s_r}\}.
\]
\normalsize
Then $I$ is the defining
ideal of an ACM set of points in 
$\mathbb{P}^1 \times \mathbb{P}^1$.
\end{lemma}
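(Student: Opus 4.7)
The plan is to reverse--engineer an ACM set of points $\mathbb{X}$ directly from the generators of $I$ and then appeal to Theorem \ref{gens} to conclude that $I=I_\mathbb{X}$. Since the horizontal forms $H_1,\dots,H_{t_r}$ are pairwise non-associate (and similarly for the $V_j$'s), each $H_i$ determines a distinct horizontal ruling and each $V_j$ a distinct vertical ruling in $\mathbb{P}^1\times\mathbb{P}^1$, so we may pass freely between the forms and their zero loci.

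First I would read off the bidegrees of the $r+1$ listed generators of $I$: they are $(t_r,0),\,(t_{r-1},s_1),\,(t_{r-2},s_2),\dots,(t_1,s_{r-1}),\,(0,s_r)$. Comparing with the shape dictated by Theorem \ref{gens}, this suggests defining the tuple
\[
\alpha := (\underbrace{s_r,\dots,s_r}_{t_1},\underbrace{s_{r-1},\dots,s_{r-1}}_{t_2-t_1},\dots,\underbrace{s_2,\dots,s_2}_{t_{r-1}-t_{r-2}},\underbrace{s_1,\dots,s_1}_{t_r-t_{r-1}}),
\]
which has length $t_r$. Because $s_1<s_2<\dots<s_r$, the tuple $\alpha$ is strictly decreasing exactly at the positions $t_1,\dots,t_{r-1}$; it is a valid non-increasing partition as in Convention \ref{conv:relabel}.

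Next I would form the candidate point set
\[
\mathbb{X} := \{H_i\cap V_j : 1\le i\le t_r,\; 1\le j\le \alpha_i\}\subseteq \mathbb{P}^1\times\mathbb{P}^1.
\]
By construction, $\pi_1^{-1}(H_i)\cap\mathbb{X}$ has cardinality $\alpha_i$, so $\alpha_\mathbb{X}=\alpha$, and $\mathbb{X}$ is a Ferrers-diagram configuration. By Remark \ref{ferrersdiagram}, $\mathbb{X}$ is ACM, so Theorem \ref{gens} applies.

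Finally I would apply Theorem \ref{gens} to $\mathbb{X}$ to identify the minimal generators of $I_{\mathbb{X}}$. The two ``boundary'' generators are $H_1\cdots H_{t_r}$ and $V_1\cdots V_{s_r}$, and each drop at position $t_j$ (for $j=1,\dots,r-1$) contributes the generator $H_1\cdots H_{t_j}V_1\cdots V_{\alpha_{t_j+1}}=H_1\cdots H_{t_j}V_1\cdots V_{s_{r-j}}$. Reindexing by $j\mapsto r-j$, this list is precisely the generating set of $I$, so $I=I_{\mathbb{X}}$ and $\mathbb{X}$ is an ACM set of points.

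I do not anticipate a serious obstacle: the only delicate point is bookkeeping with the two indexing conventions (drops read left-to-right in $\alpha_\mathbb{X}$ versus the right-to-left listing of generators in the statement), which is routine. The distinctness hypothesis on the $H_i$'s and $V_j$'s is what guarantees that $\mathbb{X}$ consists of $|\mathbb{X}|=\sum\alpha_i$ distinct points and that the Ferrers-diagram description in Remark \ref{ferrersdiagram} genuinely applies.
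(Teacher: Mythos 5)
Your proposal is correct and follows essentially the same route as the paper's own proof: construct the partition $\alpha$ from the data $(t_i,s_j)$, build the Ferrers-diagram point set $\mathbb{X}$ with $\alpha_{\mathbb{X}}=\alpha$, invoke Remark \ref{ferrersdiagram} to get ACM-ness, and then match the generators from Theorem \ref{gens} against the given list. The extra remarks you make about bidegrees and reindexing are just explicit bookkeeping of what the paper leaves implicit.
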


\begin{proof}
    From the $t_i$'s and $s_j$'s, define
    the tuple
    $$\alpha = 
    (\underbrace{s_r,\ldots,s_r}_{t_1},
    \underbrace{s_{r-1},\ldots,s_{r-1}}_{t_2-t_1},\ldots,\underbrace{s_1,\ldots,s_1}_{t_r-t_{r-1}}).$$
    Each $H_i$ defines a distinct point
    $A_i \in \mathbb{P}^1$, and 
    similarly, each $V_j$ defines a distinct
    point $B_j \in \mathbb{P}^1$.  Note that
    we can write $H_{A_i}$, respectively
    $V_{B_j}$, for $H_i$, respectively $V_j$.

    With this notation, we define the
    following set of points in
    $\mathbb{P}^1 \times \mathbb{P}^1$:
    $$\mathbb{X} = 
    \{A_i \times B_j : 1 \leq i \leq t_r, ~~
    1 \leq j \leq \alpha_i\}.$$
    By construction
    (also see Remark \ref{ferrersdiagram}) this is an ACM set
    of points in $\mathbb{P}^1 \times \mathbb{P}^1$ with $\alpha_{\mathbb{X}}  = \alpha$.
    By Theorem \ref{gens}, the generators 
    of this set of points is precisely the
    set of generators given in the statement.
    \end{proof}

    We can extend the previous result by showing that if two
    ideals are built from the same $H_i$'s and $V_j$'s, then
    the intersection of these ideals is also the ideal of an ACM set of
    points.

    \begin{lemma}\label{lem:intersect}
        Let $R = \mathbb{C}[x_0,x_1,y_0,y_1]$.
Fix a positive integer $r > 0$ and
integers 
$$\begin{array}{ll}
0 < t_1 < t_2 < \cdots < t_r,&
0 < t'_1 < \cdots < t'_{r-1} < t_r, \\
0 < s_1 < s_2 < \ldots < s_r, & ~\mbox{and}~~ 0 <s'_1 < \cdots < s'_{r-1} < s_r.
\end{array}
$$
(Note that we are requiring $t'_r = t_r$ and $s'_r = s_r$.)
Let $H_1,\ldots,H_{t_r}$ be $t_r$
forms of degree $(1,0)$ in $R$ distinct up to multiplication by $\mathbb{C}^{*}$
and let $V_1,\ldots,V_{s_r}$ be $s_r$ linear forms of degree $(0,1)$ in
$R$ distinct up to multiplication by $\mathbb{C}^{*}$. Let 
\footnotesize
\begin{eqnarray*}
I & = & \langle H_1\cdots H_{t_r},
H_1\cdots H_{t_{r-1}}V_{1}\cdots V_{s_1},
H_1\cdots H_{t_{r-2}}V_1\cdots V_{s_2}, 
\dots, H_1\cdots H_{t_1}V_1\cdots V_{s_{r-1}},
V_1\cdots V_{s_r}\rangle, ~\mbox{and}~ \\
J & =& \langle H_1\cdots H_{t_r},
H_1\cdots H_{t'_{r-1}}V_{1}\cdots V_{s'_1},
H_1\cdots H_{t'_{r-2}}V_1\cdots V_{s'_2}, 
\dots, H_1\cdots H_{t'_1}V_1\cdots V_{s'_{r-1}},
V_1\cdots V_{s_r}\rangle.
\end{eqnarray*}
\normalsize
Then the ideal $I \cap J$ is the defining
ideal of an ACM set of points in 
$\mathbb{P}^1 \times \mathbb{P}^1$.
    \end{lemma}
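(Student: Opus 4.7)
The plan is to identify both $I$ and $J$ as defining ideals of ACM point sets via Lemma \ref{gens=>acm}, observe that their intersection is the defining ideal of the union of these point sets, and then verify that the union is still an ACM configuration.

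First I would apply Lemma \ref{gens=>acm} to each ideal. This gives ACM sets of points $\mathbb{X}_I$ and $\mathbb{X}_J$ with $I = I_{\mathbb{X}_I}$ and $J = I_{\mathbb{X}_J}$, having associated tuples
$$\alpha_{\mathbb{X}_I} = (\underbrace{s_r,\ldots,s_r}_{t_1}, \underbrace{s_{r-1},\ldots,s_{r-1}}_{t_2-t_1},\ldots,\underbrace{s_1,\ldots,s_1}_{t_r-t_{r-1}})$$
and an analogous expression for $\alpha_{\mathbb{X}_J}$ using the $t'_i$'s and $s'_i$'s (with $t'_r = t_r$ and $s'_r = s_r$). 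The key point is that the same linear forms $H_1,\ldots,H_{t_r}$ and $V_1,\ldots,V_{s_r}$ are used in constructing both $\mathbb{X}_I$ and $\mathbb{X}_J$. Writing $A_i$ for the point in $\mathbb{P}^1$ defined by $H_i$ and $B_j$ for the point defined by $V_j$, both sets then live inside the grid $\{A_1,\ldots,A_{t_r}\}\times \{B_1,\ldots,B_{s_r}\}$, and explicitly
$$\mathbb{X}_I = \{A_i \times B_j : 1 \le i \le t_r,\ 1 \le j \le \alpha_{\mathbb{X}_I,i}\},$$
with the analogous description for $\mathbb{X}_J$.

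Next I would use the fact that $I$ and $J$, being defining ideals of reduced sets of points, are radical. Thus the intersection of radical ideals satisfies $I \cap J = I_{\mathbb{X}_I \cup \mathbb{X}_J}$, reducing the problem to showing that the union $\mathbb{X}_I \cup \mathbb{X}_J$ is ACM.

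Finally, from the explicit descriptions above, the union is
$$\mathbb{X}_I \cup \mathbb{X}_J = \{A_i \times B_j : 1 \le i \le t_r,\ 1 \le j \le \max(\alpha_{\mathbb{X}_I,i},\ \alpha_{\mathbb{X}_J,i})\}.$$
The entry-wise maximum of two non-increasing sequences is again non-increasing (since $\max(a_i,b_i) \ge a_{i+1}$ and $\max(a_i,b_i) \ge b_{i+1}$), so this data defines a Ferrers-type configuration on the rulings $H_1,\ldots,H_{t_r}$ and $V_1,\ldots,V_{s_r}$. By Remark \ref{ferrersdiagram} the union is ACM, and we conclude $I \cap J$ is the defining ideal of an ACM set of points. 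I do not anticipate a genuine obstacle here: the only subtlety is confirming the non-increasing property of the entry-wise maximum, and that two separate Ferrers configurations on the same underlying rulings combine into a single Ferrers configuration whose row lengths are exactly these maxima.
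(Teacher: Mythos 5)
Your proposal follows the paper's proof essentially verbatim: invoke Lemma \ref{gens=>acm} to identify $I$ and $J$ as ideals of ACM point sets on the same grid, observe $I \cap J = I_{\mathbb{X}_I \cup \mathbb{X}_J}$, describe the union via the entry-wise maximum of the two $\alpha$-tuples, and conclude ACM by Remark \ref{ferrersdiagram}. The only cosmetic difference is that you explicitly verify the entry-wise max of two non-increasing tuples is non-increasing, a point the paper leaves implicit.
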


\begin{proof}
    By Lemma \ref{gens=>acm}, the ideals $I$ and $J$ define
    sets of ACM points in $\mathbb{P}^1 \times \mathbb{P}^1$,
    say $\mathbb{X}_1$ and $\mathbb{X}_2$, respectively.
    So $I \cap J$ is the defining ideal of the union of these 
    points, i.e., $\mathbb{X}_1 \cup \mathbb{X}_2$.

    Let $A_i$ be the point that $H_i$ defines in $\mathbb{P}^1$
    and $B_j$ be the point that $V_j$ defines in $\mathbb{P}^1$.
    As shown in the proof of Lemma \ref{gens=>acm}, $\mathbb{X}_1$
    is the set 
    $$\mathbb{X}_1 = \{A_i \times B_j : 1 \leq i \leq t_r,~~
    1 \leq j \leq \alpha_i\}$$
    where $$\alpha = 
    (\underbrace{s_r,\ldots,s_r}_{t_1},
    \underbrace{s_{r-1},\ldots,s_{r-1}}_{t_2-t_1},\ldots,\underbrace{s_1,\ldots,s_1}_{t_r-t_{r-1}}).$$
    Similarly, $\mathbb{X}_2$ 
    is the set 
    $$\mathbb{X}_1 = \{A_i \times B_j : 1 \leq i \leq t_r,~~
    1 \leq j \leq \alpha'_i\}$$
    where
    $$\alpha' = 
    (\underbrace{s_r,\ldots,s_r}_{t'_1},
    \underbrace{s'_{r-1},\ldots,s'_{r-1}}_{t'_2-t'_1},\ldots,\underbrace{s'_1,\ldots,s'_1}_{t_r-t'_{r-1}}).$$
    It then follows that set of
    $$\mathbb{X} = \{A_i \times B_j : 1 \leq i \leq t_r, ~~
    1 \leq j \leq \max\{\alpha_i,\alpha'_i\}\},$$
    which can be constructed from the partition $\alpha_{\mathbb{X}}
    = (\max\{\alpha_1,\alpha'_1\},\ldots,
    \max\{\alpha_{t_r},\alpha'_{t_r}\})$.  By 
    Remark \ref{ferrersdiagram} this set of points is an ACM
    set of points in $\mathbb{P}^1 \times \mathbb{P}^1$.
\end{proof}

We require one further lemma for the sequel:

\begin{lemma}\label{lem:intersectpts+line}
    Let $\mathbb{X}$ be an ACM set of points in $\mathbb{P}^1
    \times \mathbb{P}^1$.  For any horizontal ruling $H$,
    we have 
    $$\langle H \rangle \cap I_{\mathbb{X}} = HI_{\mathbb{X} \setminus H}$$
    where $\mathbb{X} \setminus H$ is an ACM set of points.  
    Similarly, for any vertical ruling $V$, 
    we have $\langle V 
    \rangle\cap I_{\mathbb{X}} = V I_{\mathbb{X} \setminus V}$
    where $\mathbb{X} \setminus V$ is an ACM set of points.
\end{lemma}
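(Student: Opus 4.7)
The plan is to split into two cases based on whether the horizontal ruling $H$ contains any point of $\mathbb{X}$, and then in each case prove the ACM claim separately from the equality of ideals. The equality of ideals will ultimately come down to the fact that $H$ is a non-zero-divisor on $R/I_{\mathbb{X} \setminus H}$, and this in turn follows from $\mathbb{X} \setminus H$ being ACM with no point on $H$.

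\emph{Case 1:} $H$ contains no point of $\mathbb{X}$. Then $\mathbb{X}\setminus H=\mathbb{X}$ is ACM by hypothesis. Since $\mathbb{X}$ is ACM, the associated primes of $R/I_\mathbb{X}$ are exactly the ideals $I_P$ for $P\in\mathbb{X}$, and since $H\notin I_P$ for any $P\in\mathbb{X}$, the form $H$ is a non-zero-divisor on $R/I_\mathbb{X}$. This gives $\langle H\rangle\cap I_\mathbb{X}=H\cdot I_\mathbb{X}$ directly.

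\emph{Case 2:} $H=H_{A_k}$ for some $k\in\{1,\ldots,h\}$. Here I would first verify that $\mathbb{X}\setminus H$ is ACM. After applying Convention \ref{conv:relabel} to put $\alpha_\mathbb{X}=(\alpha_1,\ldots,\alpha_h)$ in non-increasing order, the Ferrers description in Remark \ref{ferrersdiagram} says that the points of $\mathbb{X}$ on $H_{A_i}$ are $A_i\times B_1,\ldots,A_i\times B_{\alpha_i}$. Removing row $k$ yields a new set of points with tuple $\alpha'=(\alpha_1,\ldots,\widehat{\alpha_k},\ldots,\alpha_h)$, which after reordering is still a partition, and the corresponding set still has the Ferrers shape of Remark \ref{ferrersdiagram}. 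Thus $\mathbb{X}\setminus H$ is ACM. (Alternatively, one can check $(\alpha')^\star=\beta'$ directly by computing $(\alpha')^\star_j=\alpha^\star_j-[\alpha_k\geq j]=\beta_j-[\alpha_k\geq j]=\beta'_j$ and invoking Theorem \ref{thm.acmclassify}.)

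Next I would prove the ideal identity $\langle H\rangle\cap I_\mathbb{X}=H\cdot I_{\mathbb{X}\setminus H}$. The inclusion $\supseteq$ is immediate: any element of $H\cdot I_{\mathbb{X}\setminus H}$ is a multiple of $H$, and it vanishes on $\mathbb{X}\cap H$ because of the factor $H$ and on $\mathbb{X}\setminus H$ because of the factor from $I_{\mathbb{X}\setminus H}$. For the reverse inclusion, write $I_\mathbb{X}=I_{\mathbb{X}\cap H}\cap I_{\mathbb{X}\setminus H}$. If $Hg\in I_\mathbb{X}$, then $Hg\in I_{\mathbb{X}\setminus H}$; since no point of $\mathbb{X}\setminus H$ lies on $H$, the form $H$ is a non-zero-divisor on $R/I_{\mathbb{X}\setminus H}$ (the associated primes are the $I_P$ with $P\in\mathbb{X}\setminus H$, none of which contain $H$), so $g\in I_{\mathbb{X}\setminus H}$, as desired. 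The statement for a vertical ruling $V$ follows by an identical argument with the roles of the two factors of $\mathbb{P}^1\times\mathbb{P}^1$ swapped.

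The main obstacle is the ACM claim in Case 2: while one expects ``removing a row from a Ferrers diagram keeps it a Ferrers diagram,'' this relies on the structural fact that an ACM set really does sit as a Ferrers configuration on a grid determined by its horizontal and vertical rulings, so that deleting all points on one horizontal ruling removes exactly one row and nothing else. Once that is pinned down via Remark \ref{ferrersdiagram} (or equivalently via the conjugate-partition identity of Theorem \ref{thm.acmclassify}), the ideal-theoretic conclusion is a short non-zero-divisor argument.
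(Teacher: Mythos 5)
Your proof is correct, and it takes a genuinely different route from the paper's. The paper establishes the identity through the colon ideal: it records $\langle H \rangle \cap I_{\mathbb{X}} = H(I_{\mathbb{X}}:\langle H \rangle)$ and then computes $I_{\mathbb{X}}:\langle H\rangle$ directly from the explicit minimal generating set of Theorem~\ref{gens}, observing that dividing each generator by $H_{A_i}$ where it appears (and leaving the rest alone) produces exactly the generator set of the ACM configuration with tuple $(\alpha_1,\ldots,\hat{\alpha}_i,\ldots,\alpha_h)$, which is identified with $I_{\mathbb{X}\setminus H}$. You instead first prove that $\mathbb{X}\setminus H$ is ACM (via the Ferrers picture of Remark~\ref{ferrersdiagram} or the conjugate-partition identity), and then deduce the ideal equality from the fact that $H$ is a non-zero-divisor on $R/I_{\mathbb{X}\setminus H}$. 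Your argument is cleaner and more conceptual --- it cleanly separates the geometric claim (ACM-ness of the residual set) from the algebraic one (the equality of ideals), whereas the paper's version, while more in the spirit of the surrounding hands-on generator manipulations, implicitly reuses the same structural fact in the sentence ``But this is the same as removing all the points of $\mathbb{X}$ that lie on $H_{A_i}$.'' Two small remarks: (i) the appeal to ACM to identify $\Ass(R/I_{\mathbb{X}\setminus H})$ is sufficient but not necessary --- since $I_{\mathbb{X}\setminus H}$ is an intersection of primes it is radical, so $\Ass = \Min$ automatically; (ii) in your ``alternative'' computation of $(\alpha')^\star = \beta'$, identifying $\beta'_j$ with $\beta_j - [\alpha_k \geq j]$ already presupposes the Ferrers shape (i.e.\ that $A_k\times B_j\in\mathbb{X}$ iff $j\le\alpha_k$), so this variant is not really independent of the Ferrers description --- which, as you correctly flag, is the one structural ingredient both proofs quietly rely on.
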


\begin{proof}
By symmetry, it is enough to prove the statement for
any horizontal ruling.  
By Theorem \ref{gens}, 
 there exists a minimal generating set \[\mathcal{G}(I_{\mathbb{X}})=\{H_{A_{1}}\cdots H_{A_{h}},V_{B_{1}}\cdots V_{B_{v}}\}\cup \{H_{A_{1}}\cdots H_{A_{i}}\cdot V_{B_{1}}\cdots V_{B_{\alpha_{i+1}}}:\alpha_{i+1}<\alpha_{i}\}\] of $I_{\mathbb{X}}$
  where $H_{A_{1}},\dots,H_{A_{h}},V_{B_{1}},\dots,V_{B_{v}}$ are the horizontal and vertical rulings associated with $\mathbb{X}$ and $\alpha_{\mathbb{X}} = (\alpha_1,\ldots,\alpha_h)$ is
  the tuple associated to $\mathbb{X}$.

  For any horizontal ruling $H$, we always have 
  $\langle H \rangle \cap I_{\mathbb{X}} = 
  H(I_{\mathbb{X}}:\langle H \rangle).$  We now consider two cases. For the first case suppose that $H$
  is not among $\{H_{A_{1}},\ldots,H_{A_{h}}\}$, then $I_{\mathbb{X}}:\langle H
  \rangle = I_{\mathbb{X}}$.  Moreover, since $H$ does not
  appear among $\{H_{A_{1}},\ldots,H_{A_{h}}\}$, this means that
  $\mathbb{X} \setminus H = \mathbb{X}$.  So 
  $\langle H \rangle \cap I_{\mathbb{X}} = HI_{\mathbb{X}\setminus H}$
  and $\mathbb{X} \setminus H $ is ACM.

  For the second case, suppose $H = H_{A_{i}} \in \{H_{A_{1}},\ldots,H_{A_{h}}\}$. 
  Then $I_{\mathbb{X}}:\langle H_{A_{i}} \rangle \subseteq I_{\mathbb{X} \setminus H_{A_{i}}}$.  For the reverse inclusion, note that for
  each generator $G \in \mathcal{G}(I_{\mathbb{X}})$,
  we have $G \in I_{\mathbb{X}}:\langle H \rangle$ if $H_{A_{i}}\nmid G$,
  and $G/H_{A_{i}} \in I_{\mathbb{X}}:\langle H \rangle$ if $H_{A_{i}} |G$.  
  But these generators are the generators of the
  ACM set of points constructed from the tuple
  $\alpha = (\alpha_1,\ldots,\hat{\alpha}_i,\ldots,\alpha_h)$,
  i.e., the tuple constructed from $\alpha_{\mathbb{X}}$ by
  removing the $i$-th entry.  But this is the same as removing
  all the points of $\mathbb{X}$ that lie on $H_{A_{i}}$.  
  So $I_{\mathbb{X} \setminus H_{A_{i}}} \subseteq I_{\mathbb{X}}:\langle H_{A_{i}} \rangle$, thus completing the proof.
\end{proof}
%%%%%%%%%%%%%%%%%%%%%%%%%%%%%%%%%%%%%%%%%%%%%%%%%%%%%%
\section{Point splittings}\label{sec:splittings}

As described in the introduction, we wish to create an analogue of splittings for ideals of points in
$\mathbb{P}^1 \times \mathbb{P}^1$. The idea is that we should be able to take a certain class of ideals and partition the minimal generators of these ideals in such a way that the ideals generated by two sets of the partition are in the same class as the original ideal. Further we would like if this partition allows us to compute the (bi)graded Betti numbers of the original ideal by looking only at the ideals generated by the partition and the ideal generated by their intersection. 

Although we will show that by taking a slightly more general class of ideals introduced in Section \ref{sec:ACM} that it is possible to find such splittings, unfortunately, it is not possible for ideals of points in $\mathbb{P}^{1}\times\mathbb{P}^{1}$ as shown by the following result.

\begin{lemma}\label{no_split}
    Let $I_{\mathbb{X}}$ be the defining ideal of a 
    set of points $\mathbb{X}$ in $\mathbb{P}^{1}\times\mathbb{P}^{1}$
    and suppose that $\mathcal{G}(I_{\mathbb{X}})=\{g_{1},\dots ,g_{r}\}$ is a minimal set of generators for $I_{\mathbb{X}}$. Then there is no partition  $\mathcal{G}(I_{\mathbb{X}})=A\sqcup B$ of the minimal generators of $I$, with $A\ne\emptyset$ and $B\ne\emptyset$, such that $J=\langle A\rangle$ and $K=\langle B\rangle$ 
    are both ideals of points in $\mathbb{P}^{1}\times\mathbb{P}^{1}$.
\end{lemma}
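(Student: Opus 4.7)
The plan is to argue by contradiction via a bidegree analysis. The crucial auxiliary fact I would establish first is that $I_{\mathbb{X}}$ has (up to a nonzero scalar) exactly one minimal generator of bidegree $(\ast,0)$, namely $F:=H_{A_{1}}\cdots H_{A_{h}}$ where $\pi_{1}(\mathbb{X})=\{A_{1},\dots,A_{h}\}$, and exactly one of bidegree $(0,\ast)$, namely $G:=V_{B_{1}}\cdots V_{B_{v}}$ with $\pi_{2}(\mathbb{X})=\{B_{1},\dots,B_{v}\}$. To see this, note that the bigraded piece $(I_{\mathbb{X}})_{(a,0)}$ consists of polynomials in $x_{0},x_{1}$ of degree $a$ vanishing at every $A_{i}$: it is zero for $a<h$, one-dimensional and spanned by $F$ for $a=h$, and for $a>h$ consists of elements $F\cdot q(x_{0},x_{1})$ with $\deg q\ge 1$, which lie in $\langle x_{0},x_{1},y_{0},y_{1}\rangle\cdot I_{\mathbb{X}}$ and so cannot be minimal. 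Hence $F$ must be a minimal generator and is the unique such of bidegree $(\ast,0)$; $G$ is handled symmetrically.

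Suppose for contradiction that a partition $\mathcal{G}(I_{\mathbb{X}})=A\sqcup B$ with $A,B\ne\emptyset$ exists such that $\langle A\rangle=I_{\mathbb{X}_{1}}$ and $\langle B\rangle=I_{\mathbb{X}_{2}}$ are both defining ideals of (nonempty) finite sets of points. Since $F$ and $G$ are each unique in their bidegree class, each lies entirely in $A$ or entirely in $B$. In each of the four cases at least one piece, say $A$, fails to contain $F$ or fails to contain $G$; say $A$ contains no minimal generator of bidegree $(\ast,0)$. I would then invoke the following bidegree-counting observation: if $\xi\in\langle A\rangle$ has bidegree $(a,0)$ and $\xi=\sum p_{i}g_{i}$ with $g_{i}\in A$, then matching bihomogeneous components forces every contributing $g_{i}$ to have bidegree $(\ast,0)$, so under our hypothesis $\xi=0$. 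Hence $\langle A\rangle$ contains no nonzero pure-$x$ element.

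This contradicts the fact that $I_{\mathbb{X}_{1}}$ must contain the nonzero bidegree-$(h',0)$ polynomial $\prod_{A\in\pi_{1}(\mathbb{X}_{1})}H_{A}$, which is well-defined and nonzero because $\mathbb{X}_{1}$ is nonempty and finite. The symmetric argument with pure-$y$ elements rules out the remaining cases. The only step requiring any care is verifying the uniqueness of the pure-variable minimal generators; once that is in hand, the bidegree bookkeeping is routine and no significant obstacle remains.
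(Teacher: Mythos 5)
Your proof is correct and follows the same logical structure as the paper's: both arguments hinge on the fact that $I_{\mathbb{X}}$ has a unique minimal generator of bidegree $(\ast,0)$ and a unique one of bidegree $(0,\ast)$, so any nontrivial partition $A \sqcup B$ leaves one side without a pure-$x$ (or pure-$y$) generator, and hence the ideal it generates cannot be the ideal of a finite set of points. The only difference is that the paper cites this fact from \cite[Theorem~1.2]{GMR1992} (using it both to locate the distinguished generators and to conclude the contradiction), whereas you make the argument self-contained by directly analyzing the bigraded pieces $(I_{\mathbb{X}})_{(a,0)}$ to identify $H_{A_1}\cdots H_{A_h}$ as the unique pure-$x$ minimal generator, and by adding the bidegree-matching observation that an ideal generated with no pure-$x$ generator contains no nonzero pure-$x$ element while the ideal of a nonempty finite point set always does.
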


\begin{proof}
    It follows from \cite[Theorem 1.2]{GMR1992}, that
    if $I_{\mathbb{X}}$ is the defining ideal of a set of points
    in $\mathbb{P}^1 \times \mathbb{P}^1$, $I_{\mathbb{X}}$ must
    have exactly one minimal generator of bidegree $(a,0)$
    and one minimal generator of bidegree $(0,b)$ for some positive integers $a$ and $b$.  

    Consequently, a minimal set of generators $\mathcal{G}(I_{\mathbb{X}})$ of $I_{\mathbb{X}}$ contains one 
    generator of bidegree $(a,0)$ and one generator of bidegree $(0,b)$, and
    the rest of the generators in $\mathcal{G}(I_{\mathbb{X}})$ have
    bidegree $(i,j)$ with $i > 0$ and $j>0$.  
    For all partitions $\mathcal{G}(I_{\mathbb{X}}) = A \sqcup B$ of the minimal generators of $I_{\mathbb{X}}$, one
    set contains the generator of degree $(a,0)$ and the
    other does not.  Without loss of generality,  say it belongs to $A$.  Thus $B$ has no generator
    of degree $(a,0)$, and thus by \cite[Theorem 1.2]{GMR1992}, the ideal $K$ cannot be the defining ideal
    of a set of points.
    \end{proof}

    By the previous result we know we cannot split the generators such that both ideals are ideals of points.
    The best we can hope for is that one of the two ideals
    is the ideal of a set of points.  In light of
    this idea, we 
    make the following definition.
   
\begin{definition}
    Let $I$ be an ideal of points in $\mathbb{P}^{1}\times\mathbb{P}^{1}$ with minimal generating set $\mathcal{G}(I)$. If there exists a partition $\mathcal{G}(I) = A \sqcup B$ of the minimal generators of $I$ with $J=\langle A\rangle$ and $K=\langle B\rangle$ such that $J$ is also an ideal of points, then we call the partition $I = J+K$   a {\it point splitting} of $I$. 
\end{definition}

\begin{theorem}\label{cor:pointsplitting}
    For every set of points $\mathbb{X} \subseteq \mathbb{P}^{1}\times\mathbb{P}^{1}$ there exists a point splitting of $I_{\mathbb{X}}$.
\end{theorem}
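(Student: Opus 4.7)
The plan is to exhibit an explicit point splitting by taking $J$ to be the ideal generated by the two ``extreme'' generators of $I_{\mathbb{X}}$ (those of bidegrees $(h,0)$ and $(0,v)$) and $K$ to be the ideal generated by all remaining minimal generators. Write $\pi_{1}(\mathbb{X})=\{A_{1},\dots,A_{h}\}$ and $\pi_{2}(\mathbb{X})=\{B_{1},\dots,B_{v}\}$. The proof of Lemma~\ref{no_split} (via \cite[Theorem 1.2]{GMR1992}) gives that $I_{\mathbb{X}}$ has exactly one minimal generator of bidegree $(a,0)$ and exactly one of bidegree $(0,b)$. Any nonzero form of bidegree $(a,0)$ vanishing on $\mathbb{X}$ must vanish on $\pi_{1}(\mathbb{X})$, forcing it to be divisible by $H_{A_{1}}\cdots H_{A_{h}}$; minimality then pins this generator down, up to a scalar in $\mathbb{C}^{*}$, to $H_{A_{1}}\cdots H_{A_{h}}$ (so $a=h$), and symmetrically the $(0,b)$ generator is $V_{B_{1}}\cdots V_{B_{v}}$ with $b=v$.

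With these two generators identified inside $\mathcal{G}(I_{\mathbb{X}})$, set
\[
A=\{H_{A_{1}}\cdots H_{A_{h}},\, V_{B_{1}}\cdots V_{B_{v}}\},\qquad B=\mathcal{G}(I_{\mathbb{X}})\setminus A,
\]
and put $J=\langle A\rangle$, $K=\langle B\rangle$. Then $\mathcal{G}(I_{\mathbb{X}})=A\sqcup B$ and $I_{\mathbb{X}}=J+K$ by construction, so it only remains to check that $J$ is itself the defining ideal of a set of points. This is precisely the degenerate instance $r=1$, $t_{1}=h$, $s_{1}=v$ of Lemma~\ref{gens=>acm}, which tells us that $\langle H_{A_{1}}\cdots H_{A_{h}},\, V_{B_{1}}\cdots V_{B_{v}}\rangle$ is the defining ideal of the ACM complete grid
\[
\mathbb{Y}=\{A_{i}\times B_{j}\mid 1\leq i\leq h,\ 1\leq j\leq v\}\subseteq\mathbb{P}^{1}\times\mathbb{P}^{1}.
\]
Therefore $J=I_{\mathbb{Y}}$, and $I_{\mathbb{X}}=I_{\mathbb{Y}}+K$ is a point splitting.

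There is no real technical obstacle; the substance of the argument is simply observing that the two bidegree-extremal generators of any $I_{\mathbb{X}}$ are forced to be the products $H_{A_{1}}\cdots H_{A_{h}}$ and $V_{B_{1}}\cdots V_{B_{v}}$ and that these two generators always cut out the bounding $h\times v$ grid of points. The only degenerate case is when $\mathbb{X}$ itself equals that complete grid, in which case $B=\emptyset$, $K=\langle 0\rangle$, and $I_{\mathbb{X}}=I_{\mathbb{X}}+\langle 0\rangle$ is a (trivial) point splitting, still fitting the stated definition.
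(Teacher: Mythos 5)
Your proof is correct and follows essentially the same route as the paper's: you take $A$ to consist of the two bidegree-extremal generators $H_{A_1}\cdots H_{A_h}$ and $V_{B_1}\cdots V_{B_v}$, put the remaining generators in $B$, and observe that $J = \langle A\rangle$ is the ideal of the complete $h\times v$ grid bounding $\mathbb{X}$. You justify more of the steps explicitly than the paper does (invoking \cite[Theorem~1.2]{GMR1992} and Lemma~\ref{gens=>acm} with $r=1$, and flagging the degenerate case where $\mathbb{X}$ is itself the bounding grid), but the decomposition and the key observation are the same.
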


\begin{proof}
For any set of points 
$\mathbb{X} \subset\mathbb{P}^{1}\times\mathbb{P}^{1}$, 
let $H_{A_{1}},\dots ,H_{A_{h}}$ denote the horizontal 
rulings and let $V_{B_{1}},\dots ,V_{B_{v}}$ denote the 
vertical rulings which minimally contain $\mathbb{X}$. 
Then $I_{\mathbb{X}}$ must have the minimal generators 
$H_{A_{1}}\cdots H_{A_{h}}$ and $V_{B_{1}}\cdots 
V_{B_{v}}$ (up to multiplication by elements of $\mathbb{C}^{*}$). 

If $\mathcal{G}(I)$ is a set of minimal generators,
we form the partition $\mathcal{G}(I) = A \sqcup B$
where $A=\{H_{A_{1}}\cdots H_{A_{h}},V_{B_{1}}\cdots V_{B_{v}}\}$.  Then $J=\langle A\rangle$ is an ideal of points 
in $\mathbb{P}^1 \times \mathbb{P}^1$; specifically,
$J$ is the ideal of the points
$$\mathbb{W} = \{A_i \times B_j ~|~ 1 \leq i \leq h, 
1 \leq j \leq v\}.$$
(Notice that in this case $J = I_{\mathbb{W}}$ is the smallest complete intersection which contains the points of $\mathbb{X}$.)
\end{proof}

When $\mathbb{X}$ is an ACM set of points, we have
many possible point splittings of $I_{X}$.

\begin{theorem}\label{pointsplittheorem} 
Let $I_\mathbb{X}$ be an ideal of an ACM set
of points $\mathbb{X}$ in $\mathbb{P}^1
\times \mathbb{P}^1$. Let $H_{A_{1}},\dots ,H_{A_{h}}$ denote the horizontal rulings and let $V_{B_{1}},\dots ,V_{B_{v}}$ denote the vertical rulings which minimally contain $\mathbb{X}$.  
    Suppose that $\mathcal{G}(I_\mathbb{X})=\{f_1,f_2,g_{1},\dots ,g_{r}\}$ is a minimal generating set with $f_{1}=H_{A_{1}}\cdots H_{A_{h}}$ and $f_{2}=V_{B_{1}}\cdots V_{B_{v}}$. 
    For any subset $A' \subseteq \{g_1,\ldots,g_r\}$,
    if  $A = \{f_1,f_2\} \cup A'$ and $B = \mathcal{G}(I) 
    \setminus A$, then $\langle A\rangle + \langle B\rangle$ is  a point
    splitting of $I_{\mathbb{X}}$.
   \end{theorem}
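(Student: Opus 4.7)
The plan is to apply Lemma \ref{gens=>acm} directly to the generating set $A = \{f_1, f_2\} \cup A'$, verifying that after a suitable relabeling its elements fit the template described in that lemma. Since we only need to check that $J = \langle A \rangle$ is the defining ideal of a set of points (the hypothesis $I_\mathbb{X}$ ACM is used implicitly, but the conclusion about $K = \langle B\rangle$ is automatic from the definition of a point splitting), this reduces the theorem to a direct indexing exercise.

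More concretely, I would begin by recalling from Theorem \ref{gens} and Remark \ref{drops} the explicit form of the $g_\ell$'s. If $j_1 < j_2 < \cdots < j_r$ are the drop locations of $\alpha_{\mathbb{X}} = (\alpha_1,\dots,\alpha_h)$, then $g_\ell = H_{A_1}\cdots H_{A_{j_\ell}} V_{B_1}\cdots V_{B_{\alpha_{j_\ell+1}}}$. Now fix $A' = \{g_{\ell_1},\dots,g_{\ell_k}\}$ with $\ell_1 < \cdots < \ell_k$, and define
\[
t_m = j_{\ell_m}\ (1 \le m \le k),\quad t_{k+1} = h,\quad s_m = \alpha_{j_{\ell_{k+1-m}}+1}\ (1 \le m \le k),\quad s_{k+1} = v.
\]
With this labeling the $k+2$ generators in $A$ are precisely the generators described in Lemma \ref{gens=>acm} (with $r$ there set to $k+1$): the generator $f_1$ equals $H_{A_1}\cdots H_{A_{t_{k+1}}}$, each $g_{\ell_m}$ equals $H_{A_1}\cdots H_{A_{t_m}} V_{B_1}\cdots V_{B_{s_{k+1-m}}}$, and $f_2$ equals $V_{B_1}\cdots V_{B_{s_{k+1}}}$.

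The remaining step is to verify the monotonicity conditions $0 < t_1 < \cdots < t_{k+1}$ and $0 < s_1 < \cdots < s_{k+1}$. The $t_m$'s are strictly increasing because $1 \le j_{\ell_1} < \cdots < j_{\ell_k} < h$ (the strict upper bound holds since a drop at position $j$ requires $j+1 \le h$). For the $s_m$'s, each $\alpha_{j_i+1} \ge \alpha_h \ge 1$, giving $s_1 \ge 1$; the defining property of a drop yields $\alpha_{j_{\ell_1}+1} > \alpha_{j_{\ell_2}+1} > \cdots > \alpha_{j_{\ell_k}+1}$ (using that $\alpha_{\mathbb{X}}$ is non-increasing and strictly decreases across each drop), so $s_1 < \cdots < s_k$; finally $s_k = \alpha_{j_{\ell_1}+1} < \alpha_{j_{\ell_1}} \le \alpha_1 = v = s_{k+1}$, where the last equality uses that $\mathbb{X}$ is ACM so $v = |\beta_{\mathbb{X}}| = \alpha_1$. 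Once the hypotheses of Lemma \ref{gens=>acm} are in place, that lemma gives $\langle A\rangle = J = I_{\mathbb{Y}}$ for some ACM set of points $\mathbb{Y} \subseteq \mathbb{P}^1 \times \mathbb{P}^1$, and hence $I_\mathbb{X} = \langle A\rangle + \langle B\rangle$ is indeed a point splitting.

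The proof is essentially bookkeeping, and I do not anticipate a serious obstacle. The only subtle point is the strict inequality $s_k < s_{k+1}$, which is what forces us to exploit both the non-increasing nature of $\alpha_\mathbb{X}$ and the fact that a drop is a \emph{strict} decrease; without the strictness of drops, the chain structure of Lemma \ref{gens=>acm} could fail.
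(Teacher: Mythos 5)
Your proof is correct and follows essentially the same route as the paper's: use Theorem~\ref{gens} and Remark~\ref{drops} to identify the chosen generators with drops of $\alpha_\mathbb{X}$, then invoke Lemma~\ref{gens=>acm} to see that $\langle A\rangle$ defines an ACM set of points. You go slightly further than the paper by writing out the relabeling $t_m = j_{\ell_m}$, $s_m = \alpha_{j_{\ell_{k+1-m}}+1}$ and explicitly verifying the strict monotonicity hypotheses of Lemma~\ref{gens=>acm} (including $s_k < s_{k+1} = v = \alpha_1$, which uses the ACM hypothesis), details the paper leaves implicit when it simply asserts the lemma applies.
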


\begin{proof}
    Let $\alpha_{\mathbb{X}} = (\alpha_1,\ldots,
    \alpha_h)$ be the tuple 
    associated to $\mathbb{X}$, and let
    $\pi_1(\mathbb{X}) = \{A_1,\ldots,A_h\}$
    and $\pi_2(\mathbb{X}) = \{B_1,\ldots,B_{\alpha_1}\}$.
    Since $\mathbb{X}$ is ACM, 
    Theorem  \ref{gens} and Remark \ref{unique_gens} implies that 
    the minimal generators of $I_{\mathbb{X}}$ have the form
    \[\{H_{A_{1}}\cdots H_{A_{h}},V_{B_{1}}\cdots V_{B_{v}}\}\cup \{H_{A_1}\cdots H_{A_{i}}V_{B_1}\cdots V_{B_{\alpha_{i+1}}}:\alpha_{i+1}<\alpha_{i}\}
    =\{f_1,f_2\} \cup \{g_1,\ldots,g_r\},\] 
    up to multiplication by an element of $\mathbb{C}^{*}$.
    Using the language of Remark \ref{drops}, there is 
    a one-to-one correspondence between the drops
    $\{j_1,\ldots,j_r\}$
    of  $\alpha_{\mathbb{X}}$ and the  
    generators in $\{g_1,\ldots,g_r\}$.

    Suppose $A' = \{g_{i_1},\ldots,g_{i_t}\}$ and 
    $\{j_{i_1},\ldots,j_{i_t}\}$ are the drops that correspond
    to these generators. Then the elements
    of $A$ can be written
    as 
    \begin{multline}
    \{H_{A_1}\cdots H_{A_r}, 
    H_{A_1}\cdots H_{A_{j_{i_1}}}V_{B_1}\cdots V_{B_{\alpha_{j_{i_1}+1}}},
    H_{A_1}\cdots H_{A_{j_{i_2}}}V_{B_1}\cdots V_{B_
    {\alpha_{j_{i_2}+1}}},
    \ldots, \\
    H_{A_1}\cdots H_{A_{j_{i_t}}}V_{B_1}\cdots V_{B_
    {\alpha_{j_{i_t}+1}}},
    V_{B_1}\cdots V_{B_v}\}.
    \end{multline}
    Now by applying Theorem \ref{gens=>acm}, we can conclude
    that the ideal generated by these elements 
    defines an ACM set of points in $\mathbb{P}^1 
    \times \mathbb{P}^1$.  Thus $\langle A \rangle + \langle B \rangle$
    is a point splitting of $I_{\mathbb{X}}$.
\end{proof}

\begin{example}
To illustrate the necessity of the ACM condition for 
the previous theorem we examine the following example 
of an ideal of points which violates the theorem stated without the ACM condition.
Let $\mathbb{X}$ be the following set of six points.
\[
\begin{array}{lll}
P_{1}=[1:1]\times[1:1] & P_{2}=[1:1]\times[2:1] & P_{3}=[1:1]\times[3:1] \\ P_{4}=[2:1]\times[1:1] & P_{5}=[2:1]\times[2:1] & P_{6}=[3:1]\times[4:1]. 
\end{array}
\]

These points can be visualized as 
{\small\[
\begin{tikzpicture}[scale =.5]

\draw[fill=black](0,0) circle (5 pt);
\draw[fill=black](1,0) circle (5 pt);
\draw[fill=black](2,0) circle (5 pt);

\draw[densely dotted] (-.5,0) -- (3.5,0);

\draw[fill=black](0,-1) circle (5 pt);
\draw[fill=black](1,-1) circle (5 pt);

\draw[densely dotted] (-.5,-1) -- (3.5,-1);

\draw[fill=black](3,-2) circle (5 pt);

\draw[densely dotted] (-.5,-2) -- (3.5,-2);

\draw[densely dotted] (0,.5) -- (0,-3.0);
\draw[densely dotted] (1,.5) -- (1,-3.0);
\draw[densely dotted] (2,.5) -- (2,-3.0);
\draw[densely dotted] (3,.5) -- (3,-3.0);

\node at (0,.5) [above]{$V_1$};
\node at (1,.5) [above]{$V_2$};
\node at (2,.5) [above]{$V_3$};
\node at (3,.5) [above]{$V_4$};

\node at (-.5,0) [left]{$H_1$};
\node at (-.5,-1) [left]{$H_2$};
\node at (-.5,-2) [left]{$H_3$};
%\node at (-.5,-3) [left]{$H_4$};

\end{tikzpicture}
\]}
Since $\alpha_{\mathbb{X}}=(3,2,1) $,  $\beta_{\mathbb{X}} = (2,2,1,1)$ and $\beta_{\mathbb{X}}^\star = (4,2)$, this set of points is not ACM.

Using Macaulay2, we can find a list of minimal 
generators:
\begin{align*} I=\langle &f_{1}=(y_{0}-y_{1})(2y_{0}-y_{1})(3y_{0}-y_{1})(4y_{0}-y_{1}), \\
&f_{2}=(x_{0}-x_{1})(2x_{0}-x_{1})(3x_{0}-x_{1}), \\ 
&g_{1}=(x_0-x_1)(2x_{0}-x_{1})(4y_{0}-y_{1}),\\
&g_{2}=(y_{0}-y_{1})(2y_{0}-y_{1})(24x_{1}y_{0}-3x_{0}y_{1}-5x_{1}y_{1}), \\
&g_{3}=(y_{0}-y_{1})(2y_{0}-y_{1})(24x_{0}y_{0}-9x_{0}y_{1}+x_{1}y_{1}),\\
&g_{4}=(x_{0}-x_{1})(16x_{1}y_{0}^{2}-24x_{1}y_{0}y_{1}+18x_{0}y_{1}^{2}-x_{1}y_{1}^{2})\rangle.
\end{align*}
Using the notation of the previous theorem, we 
let $A'=\{g_{2}\}$. 
Then $A=\{f_{1},f_{2},g_{2}\}$ and $B=\{g_{1},g_{3},g_{4}\}$. 
We examine the ideal $J=\langle A\rangle$. To show that $J$ is not an ideal of points and hence not a summand of a point splitting of $I$ we compute the primary decomposition of $J$. The primary ideals that appear in the decomposition
are:
\footnotesize
\begin{align*}
Q_{1}=& \langle x_{0}-x_{1},\,  y_{0}-y_{1} \rangle,~~
Q_{2}= \langle 2\,x_{0}-x_{1},\, y_{0}-y_{1} \rangle,~~ Q_{3}= \langle 3\,x_{0}-x_{1},\, y_{0}-y_{1} \rangle, \\
Q_{4}=& \langle x_{0}-x_{1},\, 2\,y_{0}-y_{1} \rangle,~~ Q_{5}= \langle 2\,x_{0}-x_{1},\, 2\,y_{0}-y_{1} \rangle,~~ Q_{6}= \langle 3\,x_{0}-x_{1},\, 2\,y_{0}-y_{1} \rangle, \\
Q_{7}=& \langle x_{0}-x_{1},\, 3\,y_{0}-y_{1} \rangle, Q_{8}=\langle 3\,x_{0}-x_{1},\, 4\,y_{0}-y_{1} \rangle, \\
Q_{9}=& \langle 2\,x_{0}-x_{1}, \,25\,y_{0}y_{1}^{2}-7\,y_{1}^{3}, \,25\,y_{0}^{2}y_{1}-2\,y_{1}^{3},\,300\,y_{0}^{3}-7\,y_{1}^{3}\rangle,\\
Q_{10}=& \langle x_{1}^{3}, \,x_{0}x_{1}^{2},\,x_{0}^{2}x_{1}, \,x_{0}^{3},\,y_{1}^{4},\,y_{0}y_{1}^{3}, \,y_{0}^{2}y_{1}^{2}, \,y_{0}^{3}y_{1}, \,y_{0}^{4}, \\ 
&48\,x_{1}y_{0}^{3}-6\,x_{0}y_{0}^{2}y_{1}-82\,x_{1}y_{0}^{2}y_{1}+9\,x_{0}y_{0}y_{1}^{2}+39\,x_{1}y_{0}y_{1}^{2}-3\,x_{0}y_{1}^{3}-5\,x_{1}y_{1}^{3}, \\ &11\,x_{0}x_{1}y_{0}y_{1}^{2}+7\,x_{1}^{2}y_{0}y_{1}^{2}-5\,x_{0}x_{1}y_{1}^{3}-x_{1}^{2}y_{1}^{3}, 121\,x_{0}^{2}y_{0}y_{1}^{2}-49\,x_{1}^{2}y_{0}y_{1}^{2}-55\,x_{0}^{2}y_{1}^{3}+24\,x_{0}x_{1}y_{1}^{3}+7\,x_{1}^{2}y_{1}^{3}, \\
&12\,x_{1}^{2}y_{0}^{2}y_{1}-7\,x_{1}^{2}y_{0}y_{1}^{2}+x_{1}^{2}y_{1}^{3},\,132\,x_{0}x_{1}y_{0}^{2}y_{1}+49\,x_{1}^{2}y_{0}y_{1}^{2}-24\,x_{0}x_{1}y_{1}^{3}-7\,x_{1}^{2}y_{1}^{3}, \\
&1452\,x_{0}^{2}y_{0}^{2}y_{1}-343\,x_{1}^{2}y_{0}y_{1}^{2}-264\,x_{0}^{2}y_{1}^{3}+168\,x_{0}x_{1}y_{1}^{3}+49\,x_{1}^{2}y_{1}^{3}, \\
&17424x_{0}^{2}y_{0}^{3}-1813x_{1}^{2}y_{0}y_{1}^{2}-1188x_{0}^{2}y_{1}^{3}+888x_{0}x_{1}y_{1}^{3}+259x_{1}^{2}y_{1}^{3}\rangle.
\end{align*}
\normalsize
From the decomposition, we see that $J$ is not the
ideal of a set of points since $Q_{9}$ and $Q_{10}$ do not correspond to a points. Thus, we do not have a point
splitting. \\

One may then suspect that when $\mathbb{X}$ is not ACM, we cannot obtain a point splitting. However we could have chosen an
alternative partition, namely $A'=\{g_{1}\}$, and so 
$A=\{f_{1},f_{2},g_{1}\}$ and $B=\{g_{2},g_{3},g_{4}\}$. Let $J=\langle A\rangle$. Then examining the primary decomposition we get the associated primes
\[
\begin{array}{lll}
Q_{1}=\langle x_{0}-x_{1},\, y_{0}-y_{1} \rangle & Q_{2}=\langle 2\,x_{0}-x_{1},\, y_{0}-y_{1} \rangle &
Q_{3}=\langle x_{0}-x_{1},\, 2\,y_{0}-y_{1} \rangle \\
Q_{4}=\langle 2\,x_{0}-x_{1},\, 2\,y_{0}-y_{1} \rangle & Q_{5}=\langle x_{0}-x_{1},\, 3\,y_{0}-y_{1} \rangle & Q_{6}=\langle 2\,x_{0}-x_{1},\, 3\,y_{0}-y_{1} \rangle \\
Q_{7}=\langle x_{0}-x_{1},\, 4\,y_{0}-y_{1} \rangle & Q_{8}=\langle 2\,x_{0}-x_{1},\, 4\,y_{0}-y_{1} \rangle  & Q_{9}=\langle 3\,x_{0}-x_{1},\, 4\,y_{0}-y_{1} \rangle.
\end{array}
\]
So, the ideal $J$ corresponds to the set of points
{\small\[
\begin{tikzpicture}[scale =.5]

\draw[fill=black](0,0) circle (5 pt);
\draw[fill=black](1,0) circle (5 pt);
\draw[fill=black](2,0) circle (5 pt);
\draw[fill=black](3,0) circle (5 pt);

\draw[densely dotted] (-.5,0) -- (3.5,0);

\draw[fill=black](0,-1) circle (5 pt);
\draw[fill=black](1,-1) circle (5 pt);
\draw[fill=black](2,-1) circle (5 pt);
\draw[fill=black](3,-1) circle (5 pt);

\draw[densely dotted] (-.5,-1) -- (3.5,-1);

\draw[fill=black](3,-2) circle (5 pt);

\draw[densely dotted] (-.5,-2) -- (3.5,-2);

\draw[densely dotted] (0,.5) -- (0,-2.5);
\draw[densely dotted] (1,.5) -- (1,-2.5);
\draw[densely dotted] (2,.5) -- (2,-2.5);
\draw[densely dotted] (3,.5) -- (3,-2.5);

\node at (-1.2,0) {$H_1$};
\node at (-1.2,-1) {$H_2$};
\node at (-1.2,-2) {$H_3$};

\node at (0,1) {$V_1$};
\node at (1,1) {$V_2$};
\node at (2,1) {$V_3$};
\node at (3,1) {$V_4$};

\end{tikzpicture}
\]}
So, for this partition, we get a point splitting.
\end{example}

\begin{remark}
    In the previous example we managed to obtain a point splitting by taking the partition that contains all generators which are products of linear forms. It seems possible that doing so always results in a point splitting. Perhaps this even characterizes the partitions of $I$ that give point splittings. This question will be explored in future work.  
\end{remark}

%%%%%%%%%%%%%%%%%%%%%%%%%%%%%%%%%%%%%%%%%%%%%%%%%%%%%%%%%%%%%%%%%%%%%%%%%%
\section{Unions of lines and ACM Sets of Points}\label{sec:ACM}

Suppose $I_{\mathbb{X}}$ is an ideal of an ACM
set of points in $\mathbb{P}^{1}\times\mathbb{P}^{1}$ with generating set $\mathcal{G}(I_{\mathbb{X}})$.
By Theorem \ref{pointsplittheorem} we know 
how to get a  partition $\mathcal{G}(I_{\mathbb{X}}) = A \sqcup B$ of $\mathcal{G}(I_{\mathbb{X}})$
so that $\langle A\rangle$ is also an ideal of an ACM set of points.
While an arbitrary partition of $\mathcal{G}(I_{\mathbb{X}})$ may not result 
in an ideal of points, this partition still has
some geometric structure.
In particular, if $I_{\mathbb{X}} = \langle A\rangle + \langle B\rangle$
with $\mathcal{G}(I_{\mathbb{X}}) = A \sqcup B$, then we will show that  both $\langle A\rangle$ and
$\langle B\rangle$ are ideals of  an ACM
sets of points {\it and/or} a collection of lines
in $\mathbb{P}^1 \times \mathbb{P}^1$.
We start with some relevant definitions.

\begin{definition}\label{PandL}
    Let $\mathbb{W}=\{P_{1},\dots ,P_{r},
    H_1,\ldots,H_t,V_1,\ldots,V_p\}  \subset\mathbb{P}^{1}\times\mathbb{P}^{1}$ where the $P_i$'s
    are points, the $H_i$'s are
    horizontal lines of type $(1,0)$, and
    the $V_i$'s are vertical
    lines of type $(0,1)$. Additionally we assume that none of the $P_{i}$ lie on the $H_{j}$ or $V_{k}$.
    Then we call $\mathbb{W}$ a {\it union of lines and points} in $\mathbb{P}^{1}\times\mathbb{P}^{1}$.
     We say that $\mathbb{W}$ is a {\it union of lines and ACM points} in $\mathbb{P}^{1}\times\mathbb{P}^{1}$ if the set of points
     $\mathbb{X}_{\mathbb{W}} = \{P_1,\ldots,P_s\}$ is 
     an ACM set of points.
\end{definition}

\begin{example}
Let $\mathbb{W}$ be the following union of lines and points in $\mathbb{P}^{1}\times\mathbb{P}^{1}$:

$$\mathbb{W}=\{[2:1]\times[3:1],[2:1]\times[4:1],[3:1]\times[3:1],[4:1]\times[3:1], H_1, V_1, V_2\}.$$
where $H_1$ is a horizontal line of bidegree $(1,0)$, $V_1$ and $V_2$ two vertical lines of bidegree $(0,1)$, given by:
\begin{eqnarray*}
H_1  = V(x_{0}-x_{1}), ~~ 
V_1 & =& V(y_{0}-y_{1}), ~~\mbox{and}~~ V_2  =V(y_{0}-2y_{1}).
\end{eqnarray*}
Then $\mathbb{W}$ can be represented 
as
{\small\[
\begin{tikzpicture}[scale =.5]

\draw[very thick] (-.5,0) -- (3.5,0);

\draw[fill=black](2,-1) circle (5 pt);
\draw[fill=black](3,-1) circle (5 pt);
\draw[densely dotted] (-.5,-1) -- (3.5,-1);

\draw[fill=black](2,-2) circle (5 pt);
\draw[densely dotted] (-.5,-2) -- (3.5,-2);

\draw[fill=black](2,-3) circle (5 pt);
\draw[densely dotted] (-.5,-3) -- (3.5,-3);

\draw[very thick] (0,.5) -- (0,-3.5);
\draw[very thick] (1,.5) -- (1,-3.5);
\draw[densely dotted] (2,.5) -- (2,-3.5);
\draw[densely dotted] (3,.5) -- (3,-3.5);

\node at (0,.5) [above]{$V_1$};
\node at (1,.5) [above]{$V_2$};
\node at (2,.5) [above]{$V_3$};
\node at (3,.5) [above]{$V_4$};

\node at (-.5,0) [left]{$H_1$};
\node at (-.5,-1) [left]{$H_2$};
\node at (-.5,-2) [left]{$H_3$};
\node at (-.5,-3) [left]{$H_4$};

\end{tikzpicture}
\]}
In this example, $\mathbb{W}$ is a union of
lines and ACM points since $\mathbb{X}_\mathbb{W}$
is an ACM set of points.
\end{example}

We have the following structure result for
the ideal of a union of lines and points.

\begin{theorem}\label{product}
        Let $\mathbb{W}=\{P_{1},\dots ,P_{r},
    H_1,\ldots,H_t,V_1,\ldots,V_p\} = 
        \mathbb{X}_{\mathbb{W}} \cup \{H_1,\ldots,H_t,V_1,\ldots,V_p\}$ 
        be a union of lines and points 
        in $\mathbb{P}^1 \times \mathbb{P}^1$.
        Assume all points and lines are unique and no points of $\mathbb{X}_{\mathbb{W}}$ lie on any of the lines.
        Then $$I_{\mathbb{W}}=
        I_{\mathbb{X}_{\mathbb{W}}}
        \left(\prod_{i=1}^{t}H_{i}\right)
        \left(\prod_{j=1}^{p}V_{j}\right)
        $$
        where we  abuse notation to let
        $H_i$ and $V_{j}$ denote both the line and the form
        defining the line.
\end{theorem}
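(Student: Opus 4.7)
The plan is to establish both inclusions of the claimed identity after rewriting $I_\mathbb{W}$ as an intersection of the prime ideals of its irreducible components. Since $\mathbb{W}$ is the disjoint union of the points of $\mathbb{X}_\mathbb{W}$ and the distinct reduced lines $H_1,\ldots,H_t,V_1,\ldots,V_p$, one has
\[
I_\mathbb{W} = I_{\mathbb{X}_\mathbb{W}} \cap \bigcap_{i=1}^t \langle H_i \rangle \cap \bigcap_{j=1}^p \langle V_j \rangle.
\]
The containment $I_{\mathbb{X}_\mathbb{W}}\bigl(\prod_i H_i\bigr)\bigl(\prod_j V_j\bigr) \subseteq I_\mathbb{W}$ is then immediate, since every generator of the product already sits inside each of the ideals on the right.

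For the reverse inclusion I would proceed by peeling off one line at a time, using two ingredients. First, the $H_i$'s and $V_j$'s are pairwise coprime irreducibles in the UFD $R=\mathbb{C}[x_{0},x_{1},y_{0},y_{1}]$: distinct $H_i$'s (respectively, distinct $V_j$'s) are non-proportional linear forms of the same bidegree, and every $H_i$ is coprime to every $V_j$ since one involves only the $x$'s and the other only the $y$'s. Second, the hypothesis that no point of $\mathbb{X}_\mathbb{W}$ lies on any of the lines gives $I_P : \langle H_i \rangle = I_P$, and likewise for $V_j$, for every prime $I_P$ with $P \in \mathbb{X}_\mathbb{W}$ (the linear form is not in the prime $I_P$, so the colon is unchanged); intersecting over $P$ yields $I_{\mathbb{X}_\mathbb{W}} : \langle H_i \rangle = I_{\mathbb{X}_\mathbb{W}}$ and $I_{\mathbb{X}_\mathbb{W}} : \langle V_j \rangle = I_{\mathbb{X}_\mathbb{W}}$.

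With these in hand the iteration is routine. The base step $I_{\mathbb{X}_\mathbb{W}} \cap \langle H_1 \rangle = H_1\bigl(I_{\mathbb{X}_\mathbb{W}} : \langle H_1 \rangle\bigr) = H_1 I_{\mathbb{X}_\mathbb{W}}$ is the standard colon identity $I \cap \langle f \rangle = f(I : \langle f \rangle)$ valid in an integral domain. Inductively, if $f \in \bigl(H_1\cdots H_{k-1} I_{\mathbb{X}_\mathbb{W}}\bigr) \cap \langle H_k \rangle$, write $f = H_1\cdots H_{k-1}g$ with $g \in I_{\mathbb{X}_\mathbb{W}}$; coprimality of $H_k$ with the earlier $H_i$'s forces $H_k \mid g$ in $R$, and the colon identity then places $g/H_k$ inside $I_{\mathbb{X}_\mathbb{W}}$. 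Iterating over all the $H_i$'s and subsequently over all the $V_j$'s produces the claimed factorization. The only point that requires a little care is the transition from the $H_i$'s to the $V_j$'s, where the coprimality argument crosses bidegrees; but since bidegree-$(1,0)$ and bidegree-$(0,1)$ forms share no variables at all, this step is immediate rather than a genuine obstacle.
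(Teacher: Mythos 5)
Your proof is correct, and it takes a genuinely different (though closely related) route from the paper's. You both start from the same decomposition $I_\mathbb{W} = I_{\mathbb{X}_\mathbb{W}} \cap \bigcap_i \langle H_i\rangle \cap \bigcap_j \langle V_j\rangle$ and dispose of the easy containment the same way. Where you differ is in proving the reverse containment: the paper first collapses $\bigcap_i \langle H_i\rangle \cap \bigcap_j\langle V_j\rangle$ to the single principal ideal $\langle \prod H_i \prod V_j\rangle$ (invoking pairwise coprimality just as you do), so any $f\in I_\mathbb{W}$ factors as $f = g\cdot\prod H_i\prod V_j$, and then it \emph{evaluates} at each point $P\in\mathbb{X}_\mathbb{W}$: since the product does not vanish at $P$, one concludes $g(P)=0$ for all $P$, hence $g\in I_{\mathbb{X}_\mathbb{W}}$. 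You instead avoid evaluation entirely and run a purely ideal-theoretic induction, peeling off one linear form at a time via the colon identity $I\cap\langle f\rangle = f(I:\langle f\rangle)$ together with $I_{\mathbb{X}_\mathbb{W}}:\langle H_i\rangle = I_{\mathbb{X}_\mathbb{W}}$. The paper's argument is shorter and exploits the geometric description of $I_{\mathbb{X}_\mathbb{W}}$ as a vanishing ideal (implicitly its radicality); yours is slightly longer but more self-contained algebraically, and in fact generalizes to any ideal $I$ for which the forms lie outside every associated prime, not only to radical ideals of points. A minor simplification of your argument: you could apply the colon identity once with $f=\prod H_i\prod V_j$ and observe directly that $I_{\mathbb{X}_\mathbb{W}}:\langle f\rangle = I_{\mathbb{X}_\mathbb{W}}$, folding the induction over the $H_i$'s and $V_j$'s into the standard fact that a product of elements outside a prime is outside the prime.
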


\begin{proof}
By definition
    \[
    I_{\mathbb{W}}=
    \left(\bigcap_{i=1}^{r}I_{P_{i}}\right)\cap
    \left(\bigcap_{i=1}^{t}I(H_{i})\right) \cap
    \left(\bigcap_{j=1}^{p}I(V_{j})\right)
    =
    I_{\mathbb{X}_{\mathbb{W}}} \cap
    \left(\bigcap_{i=1}^{t}I(H_{i})\right) \cap
    \left(\bigcap_{j=1}^{p}I(V_{j})\right)
    .\]
    Now we note that
    each $I(H_{i})$ is generated by a single 
    irreducible element $a_{1,i}x_{0}-a_{0,i}x_{1}$
    and each $I(V_{j})$ is generated by a single irreducible element $b_{1,j}y_{0}-b_{0,j}y_{1}$.
    Thus we have 
    \[
    \left(\bigcap_{i=1}^{t}I(H_{i})\right) \cap
    \left(\bigcap_{j=1}^{p}I(V_{j})\right)
    =
     \left(\prod_{i=1}^{t}H_{i}\right)
        \left(\prod_{j=1}^{p}V_{j}\right)
        .\] 

Because $A \cap B \supseteq AB$ for any two ideals $A$
and $B$, we have 
the $\supseteq$ containment.
For the reverse containment,
since $
\left(\prod_{i=1}^{t}H_{i}\right)\left(\prod_{j=1}^{p}V_{j}\right)
$ is principal, any element that belongs to $I_{\mathbb{W}}$ is a multiple of the generator. Further, by assumption, we have that none of the points
of $\mathbb{X}_\mathbb{W}$ lie on any of the $H_{i}$ or $V_{j}$. Therefore 
for any 
$$f = g\cdot \left(\prod_{i=1}^{t}H_{i}\right)
\left(\prod_{j=1}^{p}V_{j}\right)
\in I_{\mathbb{W}} = 
 I_{\mathbb{X}_{\mathbb{W}}} \cap
    \left(\bigcap_{i=1}^{t}I(H_{i})\right) \cap
    \left(\bigcap_{j=1}^{p}I(V_{j})\right)
,$$ 
if 
$P\in \mathbb{X}_{\mathbb{W}}$, 
then $f(P) =g\cdot
\left(\prod_{i=1}^{t}H_{i}\right)\left(\prod_{j=1}^{p}V_{j}\right)
(P)=0$ implies that $g\in I_{\mathbb{X}_{\mathbb{W}}}$.
Hence \[I_{\mathbb{W}}=\left(\prod_{i=1}^{t}H_{i}\right)
        \left(\prod_{j=1}^{p}V_{j}\right)
I_{\mathbb{X}_{\mathbb{W}}}\] as required. 
\end{proof}

\begin{corollary}\label{isACM}
 Let  $\mathbb{W}=\{P_{1},\dots ,P_{r},
    H_1,\ldots,H_t,V_1,\ldots,V_p\} = 
        \mathbb{X}_{\mathbb{W}} \cup \{H_1,\ldots,H_t,V_1,\ldots,V_p\}$ 
        be a union of points and lines
        in $\mathbb{P}^1 \times \mathbb{P}^1$.
        Assume all points and lines are unique and no points of $\mathbb{X}_{\mathbb{W}}$ lie on any of the lines.
    Then the resolution of $I_{\mathbb{W}}$ is identical to the resolution of $I_{\mathbb{X}_{\mathbb{W}}}$ with every generator multiplied by the product of the lines and with the grading shifted by the degree of this product.
\end{corollary}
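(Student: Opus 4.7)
The plan is to exhibit a degree-shifted $R$-module isomorphism between $I_{\mathbb{X}_{\mathbb{W}}}$ and $I_{\mathbb{W}}$, and then transport the minimal bigraded free resolution across it. The substance has already been done in Theorem \ref{product}, so this corollary is essentially a bookkeeping exercise on bigradings.

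First, I would set $F = \left(\prod_{i=1}^{t} H_i\right)\left(\prod_{j=1}^{p} V_j\right)$; this is a bihomogeneous element of $R$ of bidegree $(t,p)$. By Theorem \ref{product} we have $I_{\mathbb{W}} = F \cdot I_{\mathbb{X}_{\mathbb{W}}}$.

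Next, I would consider the multiplication-by-$F$ map $\varphi : I_{\mathbb{X}_{\mathbb{W}}}(-t,-p) \to I_{\mathbb{W}}$ sending $g \mapsto Fg$. This map is surjective by Theorem \ref{product} and injective because $R$ is an integral domain and $F \ne 0$. The shift $(-t,-p)$ on the source is chosen precisely so that $\varphi$ is a degree $(0,0)$ homomorphism of bigraded $R$-modules, hence a bigraded isomorphism.

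Finally, if $\mathbb{F}_\bullet \to I_{\mathbb{X}_{\mathbb{W}}} \to 0$ is the bigraded minimal free resolution of $I_{\mathbb{X}_{\mathbb{W}}}$ (for instance, the one produced by Theorem \ref{resolution}), then the shifted complex $\mathbb{F}_\bullet(-t,-p) \to I_{\mathbb{W}} \to 0$ obtained by applying $-\otimes_R R(-t,-p)$ is a bigraded free resolution of $I_{\mathbb{W}}$. Minimality is preserved because shifting every free summand by the same bidegree does not alter the entries of the differential matrices, which still lie in the bigraded maximal ideal $\langle x_0,x_1,y_0,y_1\rangle$. Reading the generators of $I_{\mathbb{W}}$ off the zeroth term of this resolution recovers exactly $F$ times the minimal generators of $I_{\mathbb{X}_{\mathbb{W}}}$, matching the statement. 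No serious obstacle arises here; the only point requiring care is the direction and bookkeeping of the bidegree shift, since all the geometric content has already been absorbed into Theorem \ref{product}.
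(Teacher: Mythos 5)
Your proposal is correct and follows essentially the same route as the paper: both rely on Theorem \ref{product} to write $I_{\mathbb{W}} = F \cdot I_{\mathbb{X}_{\mathbb{W}}}$ and then observe that multiplication by the nonzerodivisor $F$ gives a bigraded isomorphism up to the shift $(-t,-p)$, under which the minimal free resolution transports unchanged. The paper's proof is stated more tersely (it simply asserts that a multiple of an ideal has the same resolution up to shift), while you make explicit the multiplication map $\varphi$, its injectivity via integrality of $R$, and the preservation of minimality; these are exactly the points the paper is implicitly leaving to the reader, so your write-up is a faithful and slightly more detailed version of the same argument.
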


\begin{proof}
    Since $I_{\mathbb{W}}$ is just a multiple of $I_{{\mathbb{X}_{\mathbb{W}}}}$, the minimal resolution of
    the two ideals will be exactly the same just with a shift in grading corresponding to the degree of $\left(\prod_{i=1}^{t}H_{i}\right)
        \left(\prod_{j=1}^{p}V_{j}\right)$.
\end{proof}

\begin{theorem}\label{subsetisACM}
        Let $\mathbb{W}$
        be a union of lines and ACM points in $\mathbb{P}^{1}\times\mathbb{P}^{1}$,
        and let
        $\mathcal{G}(I_{\mathbb{W}})=\{g_{1},\ldots,g_{r}\}$ be a minimal set of generators for $I_{\mathbb{W}}$. Then for any non-empty subset $S \subseteq \mathcal{G}(I_{\mathbb{W}})$,
        the ideal $\langle S\rangle$ is the defining ideal
        of a union of lines and ACM points 
        (we allow the case of all lines or all points).
\end{theorem}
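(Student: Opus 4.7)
The plan is to reduce the claim to Lemma \ref{gens=>acm} via the decomposition provided by Theorem \ref{product}. By Theorem \ref{product} we can write $I_{\mathbb{W}}=F\cdot I_{\mathbb{X}_{\mathbb{W}}}$ where $F=\prod_{i=1}^{t}H_{i}\prod_{j=1}^{p}V_{j}$, and Corollary \ref{isACM} tells us that the minimal generators of $I_{\mathbb{W}}$ are exactly $F$ times the minimal generators of $I_{\mathbb{X}_{\mathbb{W}}}$, which are listed explicitly in Theorem \ref{gens}. Given a non-empty $S\subseteq\mathcal{G}(I_{\mathbb{W}})$, I would let $S'\subseteq\mathcal{G}(I_{\mathbb{X}_{\mathbb{W}}})$ denote the corresponding subset, so that $\langle S\rangle=F\langle S'\rangle$.

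The main step is to extract an ACM structure from $\langle S'\rangle$. Order the elements of $S'$ as $g_{1},\ldots,g_{m}$ by increasing number of horizontal factors; by the staircase structure of Theorem \ref{gens}, this is equivalent to ordering by strictly decreasing number of vertical factors. Writing $g_{i}=H_{A_{1}}\cdots H_{A_{a_{i}}}V_{B_{1}}\cdots V_{B_{b_{i}}}$ with $a_{1}<\cdots<a_{m}$ and $b_{1}>\cdots>b_{m}$, the common divisor $G:=H_{A_{1}}\cdots H_{A_{a_{1}}}V_{B_{1}}\cdots V_{B_{b_{m}}}$ divides every $g_{i}$. After dividing out $G$, the generator $g_{i}/G$ has $a_{i}-a_{1}$ horizontal factors and $b_{i}-b_{m}$ vertical factors, with these counts strictly increasing from $0$ up to $a_{m}-a_{1}$ and strictly decreasing from $b_{1}-b_{m}$ down to $0$ respectively. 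This is precisely the shape required by Lemma \ref{gens=>acm} (applied with $r=m-1$), so $\langle S'\rangle=G\cdot I_{\mathbb{Y}}$ for some ACM set of points $\mathbb{Y}$. When $m=1$, the ideal $\langle S\rangle$ is instead principal and generated by a product of distinct linear forms of degree $(1,0)$ and $(0,1)$, and so defines a union of lines with no points, which is permitted by the statement.

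To conclude, let $\mathbb{W}'$ be the union of $\mathbb{Y}$ with the lines cut out by the linear factors of $FG$. The lines in $F$ are by hypothesis disjoint from $\mathbb{X}_{\mathbb{W}}$ and hence distinct from the $H_{A_{i}}$ and $V_{B_{j}}$ (since $H_{A_{i}}$ passes through points of $\mathbb{X}_{\mathbb{W}}$), while the lines in $G$ are $H_{A_{1}},\ldots,H_{A_{a_{1}}}$ and $V_{B_{1}},\ldots,V_{B_{b_{m}}}$, disjoint from the $H_{A_{i}},V_{B_{j}}$ with $i>a_{1}$ and $j>b_{m}$ which appear as the coordinates of $\mathbb{Y}$. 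Thus the disjointness hypothesis of Theorem \ref{product} holds for $\mathbb{W}'$, and applying that theorem in reverse gives $\langle S\rangle=FG\cdot I_{\mathbb{Y}}=I_{\mathbb{W}'}$, as desired.

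The principal obstacle I anticipate is the bookkeeping: correctly encoding the staircase indexing of Theorem \ref{gens} into the ordering of $S'$, handling the degenerate case $m=1$ separately, and verifying the disjointness hypotheses needed to invoke Theorem \ref{product} in the reverse direction. Beyond this, the argument is essentially a direct combination of Theorem \ref{product}, Theorem \ref{gens}, Corollary \ref{isACM}, and Lemma \ref{gens=>acm}.
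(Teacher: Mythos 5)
Your proposal takes essentially the same route as the paper's proof: reduce to the ACM set of points $\mathbb{X}_{\mathbb{W}}$ via Theorem \ref{product}, pull out the greatest common divisor of the chosen generators, and recognize the quotients as having the shape required by Lemma \ref{gens=>acm}. Your write-up is in fact somewhat more careful than the paper's in two respects: you handle the case where $S$ contains both extreme generators (so the gcd is $1$) uniformly rather than as a separate case, and you explicitly address $|S|=1$, where Lemma \ref{gens=>acm} does not apply and $\langle S\rangle$ defines only a union of lines.
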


\begin{proof}
    Suppose $\mathbb{W}=\{P_{1},\dots ,P_{r}, H_1,\ldots,H_t,V_1,\ldots,V_p\} = \mathbb{X}_{\mathbb{W}} \cup \{H_1,\ldots,H_t,V_1,\ldots,V_p\}$ is a union of lines and  an ACM set of points in $\mathbb{P}^{1}\times\mathbb{P}^{1}$. Then by Theorem \ref{product} we have $I_{\mathbb{W}}=
        I_{\mathbb{X}_{\mathbb{W}}}
        \left(\prod_{i=1}^{t}H_{i}\right)
        \left(\prod_{j=1}^{p}V_{j}\right)
        $. So it suffices to consider the case where 
    $\mathbb{W}=\mathbb{X}_{\mathbb{W}}$ since we can just factor out the product of the lines. Therefore we consider the case where 
    $\mathbb{W}=\mathbb{X}_{\mathbb{W}}$ is an ACM set of points.

    By Theorem \ref{gens} we have a minimal homogeneous set of generators of $I_{\mathbb{W}}$ given by  
    \[\mathcal{G}(I_{\mathbb{W}})=\{H_{A_{1}}\cdots H_{A_{h}},
    V_{B_{1}}\cdots V_{B_{v}}\}\cup 
    \{H_{A_{1}}\cdots H_{A_{i}}\cdot V_{B_{1}}\cdots V_{B_{\alpha_{i+1}}}:\alpha_{i+1}<\alpha_{i}\}\] where $H_{A_{1}},\dots,H_{A_{h}},V_{B_{1}},\dots,V_{B_{v}}$ are the horizontal and vertical rulings associated with $\mathbb{W}=\mathbb{X}_{\mathbb{W}}$ and $\alpha_{\mathbb{X}_{\mathbb{W}}} = (\alpha_1,\ldots,\alpha_h)$ is
  the tuple associated to $\mathbb{X}_{\mathbb{W}}$. Suppose that $S$ is a subset of these generators. If $S$ contains both $H_{A_{1}}\cdots H_{A_{h}}$ and $V_{B_{1}}\cdots V_{B_{v}}$, then we are in the situation of Theorem \ref{pointsplittheorem} and $S$ generates an ACM set of points.

    Suppose that $S$ contains at most one
    of $H_{A_{1}}\cdots H_{A_{h}}$ or $V_{B_{1}}\cdots V_{B_{v}}$.
    Factor out the greatest common divisor $D$ of 
    the elements of $S$.  Note that this element will have
    the form $H_{A_1}\cdots H_{A_a}V_{B_1}\cdots V_{B_b}$ 
    for some $a$ and $b$. (It might be the case that $a=0$ or $b=0$,
    that is, no $H_{A_i}$ or $V_{B_j}$ divides all the elements of
    $S$.)  The set of polynomials 
    in the set $S'=\{F/D : F \in S\}$ will then have
    the form of the polynomials that appear in Theorem \ref{gens=>acm}.
    Thus, the elements in this set generates an
    ACM sets of points in $\mathbb{P}^1 \times \mathbb{P}^1$, say
    $\mathbb{Y}$.  Thus 
    $$\langle S \rangle = D\langle S' \rangle = D I_{\mathbb{Y}}.$$
    In other words, $S$ generates the ideal
    of a union of lines and ACM set of points.
\end{proof}

\begin{example} Continuing with the example from Example \ref{runex}, we have the following ACM configuration of points in $\mathbb{P}^{1}\times\mathbb{P}^{1}$:

{\small\[
\begin{tikzpicture}[scale =.5]

\draw[fill=black](0,0) circle (5 pt);
\draw[fill=black](1,0) circle (5 pt);
\draw[fill=black](2,0) circle (5 pt);
\draw[fill=black](3,0) circle (5 pt);
\draw[fill=black](4,0) circle (5 pt);
\draw[densely dotted] (-.5,0) -- (4.5,0);

\draw[fill=black](0,-1) circle (5 pt);
\draw[fill=black](1,-1) circle (5 pt);
\draw[fill=black](2,-1) circle (5 pt);
\draw[fill=black](3,-1) circle (5 pt);
\draw[densely dotted] (-.5,-1) -- (4.5,-1);

\draw[fill=black](0,-2) circle (5 pt);
\draw[fill=black](1,-2) circle (5 pt);
\draw[fill=black](2,-2) circle (5 pt);
\draw[densely dotted] (-.5,-2) -- (4.5,-2);

\draw[fill=black](0,-3) circle (5 pt);
\draw[fill=black](1,-3) circle (5 pt);
\draw[fill=black](2,-3) circle (5 pt);
\draw[densely dotted] (-.5,-3) -- (4.5,-3);

\draw[fill=black](0,-4) circle (5 pt);
\draw[fill=black](1,-4) circle (5 pt);
\draw[densely dotted] (-.5,-4) -- (4.5,-4);

\draw[fill=black](0,-5) circle (5 pt);
\draw[fill=black](1,-5) circle (5 pt);
\draw[densely dotted] (-.5,-5) -- (4.5,-5);

\draw[fill=black](0,-6) circle (5 pt);
\draw[densely dotted] (-.5,-6) -- (4.5,-6);

\draw[densely dotted] (0,.5) -- (0,-6.5);
\draw[densely dotted] (1,.5) -- (1,-6.5);
\draw[densely dotted] (2,.5) -- (2,-6.5);
\draw[densely dotted] (3,.5) -- (3,-6.5);
\draw[densely dotted] (4,.5) -- (4,-6.5);

\node at (0,.5) [above]{$V_1$};
\node at (1,.5) [above]{$V_2$};
\node at (2,.5) [above]{$V_3$};
\node at (3,.5) [above]{$V_4$};
\node at (4,.5) [above]{$V_5$};

\node at (-.5,0) [left]{$H_1$};
\node at (-.5,-1) [left]{$H_2$};
\node at (-.5,-2) [left]{$H_3$};
\node at (-.5,-3) [left]{$H_4$};
\node at (-.5,-4) [left]{$H_5$};
\node at (-.5,-5) [left]{$H_6$};
\node at (-.5,-6) [left]{$H_7$};
\end{tikzpicture}
\]}

with minimal generators given by 
\begin{align*}
    \mathcal{G}(I_{\mathbb{X}}) =\{&V_{1}V_{2}V_{3}V_{4}V_{5}, H_{1}V_{1}V_{2}V_{3}V_{4}, H_{1}H_{2}V_{1}V_{2}V_{3}, H_{1}H_{2}H_{3}H_{4}V_{1}V_{2}, \\ 
    &H_{1}H_{2}H_{3}H_{4}H_{5}H_{6}V_{1}, H_{1}H_{2}H_{3}H_{4}H_{5}H_{6}H_{7}\}.
\end{align*}

We will take the following subset $S\subseteq\mathcal{G}(I_{\mathbb{X}})$: \[S=\{V_{1}V_{2}V_{3}V_{4}V_{5},H_{1}H_{2}V_{1}V_{2}V_{3},H_{1}H_{2}H_{3}H_{4}H_{5}H_{6}V_{1}\}.\] Following the previous proof we have that $D=V_{1}$, and so \[S'=\{F/D:F\in S\}=\{V_{2}V_{3}V_{4}V_{5},H_{1}H_{2}V_{2}V_{3},H_{1}H_{2}H_{3}H_{4}H_{5}H_{6}\}.\] By Lemma \ref{gens=>acm}, this corresponds to the following configuration of ACM points $\mathbb{Y}$:

{\small\[
\begin{tikzpicture}[scale =.5]

\draw[fill=black](1,0) circle (5 pt);
\draw[fill=black](2,0) circle (5 pt);
\draw[fill=black](3,0) circle (5 pt);
\draw[fill=black](4,0) circle (5 pt);

\draw[fill=black](1,-1) circle (5 pt);
\draw[fill=black](2,-1) circle (5 pt);
\draw[fill=black](3,-1) circle (5 pt);
\draw[fill=black](4,-1) circle (5 pt);

\draw[fill=black](1,-2) circle (5 pt);
\draw[fill=black](2,-2) circle (5 pt);

\draw[fill=black](1,-3) circle (5 pt);
\draw[fill=black](2,-3) circle (5 pt);

\draw[fill=black](1,-4) circle (5 pt);
\draw[fill=black](2,-4) circle (5 pt);

\draw[fill=black](1,-5) circle (5 pt);
\draw[fill=black](2,-5) circle (5 pt);

\draw[densely dotted] (0.5,0) -- (4.5,0);
\draw[densely dotted] (0.5,-1) -- (4.5,-1);
\draw[densely dotted] (0.5,-2) -- (4.5,-2);
\draw[densely dotted] (0.5,-3) -- (4.5,-3);
\draw[densely dotted] (0.5,-4) -- (4.5,-4);
\draw[densely dotted] (0.5,-5) -- (4.5,-5);

\draw[densely dotted] (1,0.5) -- (1,-5.5);
\draw[densely dotted] (2,0.5) -- (2,-5.5);
\draw[densely dotted] (3,0.5) -- (3,-5.5);
\draw[densely dotted] (4,0.5) -- (4,-5.5);

\node at (1,.5) [above]{$V_2$};
\node at (2,.5) [above]{$V_3$};
\node at (3,.5) [above]{$V_4$};
\node at (4,.5) [above]{$V_5$};

\node at (0.5,0) [left]{$H_1$};
\node at (0.5,-1) [left]{$H_2$};
\node at (0.5,-2) [left]{$H_3$};
\node at (0.5,-3) [left]{$H_4$};
\node at (0.5,-4) [left]{$H_5$};
\node at (0.5,-5) [left]{$H_6$};

\end{tikzpicture}
\]}
Then multiplying $I_{\mathbb{Y}}$ by $D$ yields: 
\begin{align*}
DI_{\mathbb{Y}}&=V_{1}\langle\{V_{2}V_{3}V_{4}V_{5},H_{1}H_{2}V_{2}V_{3},H_{1}H_{2}H_{3}H_{4}H_{5}H_{6}\}\rangle \\
&=\langle\{V_{1}V_{2}V_{3}V_{4}V_{5},H_{1}H_{2}V_{1}V_{2}V_{3},H_{1}H_{2}H_{3}H_{4}H_{5}H_{6}V_{1}\}\rangle \\
&=\langle S\rangle
\end{align*}
The ideal $\left<S\right>$ corresponds to the following union of lines and ACM points 
{\small\[
\begin{tikzpicture}[scale =.5]

\draw[fill=black](1,0) circle (5 pt);
\draw[fill=black](2,0) circle (5 pt);
\draw[fill=black](3,0) circle (5 pt);
\draw[fill=black](4,0) circle (5 pt);

\draw[fill=black](1,-1) circle (5 pt);
\draw[fill=black](2,-1) circle (5 pt);
\draw[fill=black](3,-1) circle (5 pt);
\draw[fill=black](4,-1) circle (5 pt);

\draw[fill=black](1,-2) circle (5 pt);
\draw[fill=black](2,-2) circle (5 pt);

\draw[fill=black](1,-3) circle (5 pt);
\draw[fill=black](2,-3) circle (5 pt);

\draw[fill=black](1,-4) circle (5 pt);
\draw[fill=black](2,-4) circle (5 pt);

\draw[fill=black](1,-5) circle (5 pt);
\draw[fill=black](2,-5) circle (5 pt);

\draw[densely dotted] (-.5,0) -- (4.5,0);
\draw[densely dotted] (-.5,-1) -- (4.5,-1);
\draw[densely dotted] (-.5,-2) -- (4.5,-2);
\draw[densely dotted] (-.5,-3) -- (4.5,-3);
\draw[densely dotted] (-.5,-4) -- (4.5,-4);
\draw[densely dotted] (-.5,-5) -- (4.5,-5);

\draw[very thick] (0,0.5) -- (0,-5.5);
\draw[densely dotted] (1,0.5) -- (1,-5.5);
\draw[densely dotted] (2,0.5) -- (2,-5.5);
\draw[densely dotted] (3,0.5) -- (3,-5.5);
\draw[densely dotted] (4,0.5) -- (4,-5.5);

\node at (0,.5) [above]{$V_1$};
\node at (1,.5) [above]{$V_2$};
\node at (2,.5) [above]{$V_3$};
\node at (3,.5) [above]{$V_4$};
\node at (4,.5) [above]{$V_5$};

\node at (-.5,0) [left]{$H_1$};
\node at (-.5,-1) [left]{$H_2$};
\node at (-.5,-2) [left]{$H_3$};
\node at (-.5,-3) [left]{$H_4$};
\node at (-.5,-4) [left]{$H_5$};
\node at (-.5,-5) [left]{$H_6$};

\end{tikzpicture}
\]}
\end{example}

\begin{corollary}\label{cor:idealK}
    The ideal $K$ of Theorem \ref{pointsplittheorem} is an ideal of a union of lines and ACM points in $\mathbb{P}^{1}\times\mathbb{P}^{1}$. 
\end{corollary}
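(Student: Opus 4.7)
The plan is to observe that Corollary \ref{cor:idealK} is essentially an immediate consequence of Theorem \ref{subsetisACM} once we realize that an ACM set of points fits into the framework of a union of lines and ACM points.

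First, I would remark that any ACM set of points $\mathbb{X}$ may be regarded as a (degenerate) union of lines and ACM points in the sense of Definition \ref{PandL}, namely the one with no horizontal lines, no vertical lines, and $\mathbb{X}_\mathbb{W} = \mathbb{X}$. Hence $I_\mathbb{X}$ is the defining ideal of a union of lines and ACM points, and by Theorem \ref{gens} its minimal generating set $\mathcal{G}(I_\mathbb{X}) = \{f_1,f_2,g_1,\ldots,g_r\}$ is exactly the set described in Theorem \ref{pointsplittheorem}.

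Next, in the notation of Theorem \ref{pointsplittheorem}, the set $B = \mathcal{G}(I_\mathbb{X}) \setminus A$ is a subset of the minimal generating set $\mathcal{G}(I_\mathbb{X})$. I would then apply Theorem \ref{subsetisACM} to the subset $S = B$ of $\mathcal{G}(I_\mathbb{X})$: that theorem yields directly that $\langle B \rangle = K$ is the defining ideal of a union of lines and ACM points in $\mathbb{P}^1 \times \mathbb{P}^1$. Concretely, this uses the recipe in the proof of Theorem \ref{subsetisACM}: one factors out the greatest common divisor $D = H_{A_1}\cdots H_{A_a} V_{B_1}\cdots V_{B_b}$ of the elements of $B$, and identifies the quotients $\{F/D : F \in B\}$ as the generators (in the sense of Lemma \ref{gens=>acm}) of an ACM set of points $\mathbb{Y}$; then $K = D \cdot I_\mathbb{Y}$, which by Theorem \ref{product} is the ideal of the union of the horizontal rulings $H_{A_1},\ldots,H_{A_a}$, the vertical rulings $V_{B_1},\ldots,V_{B_b}$, and the ACM set of points $\mathbb{Y}$.

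I do not expect any real obstacle: the corollary is a direct specialization of Theorem \ref{subsetisACM} to the subset $B$ arising from a point splitting, and Theorem \ref{gens} guarantees that the generating set to which we apply Theorem \ref{subsetisACM} has the requisite product-of-rulings structure. The only cosmetic point is the edge case $B = \emptyset$, which is either tacitly excluded by the convention that a splitting uses two non-empty pieces of the partition, or trivially handled since then $K = (0)$.
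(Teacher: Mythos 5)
Your proposal is correct and takes essentially the same route as the paper: the paper's proof also observes that a set of points is a degenerate union of lines and ACM points and then invokes Theorem \ref{subsetisACM} (being the result immediately preceding the corollary) applied to the subset $B$ of $\mathcal{G}(I_{\mathbb{X}})$. Your version just spells out the factorization-by-GCD step that the paper leaves implicit.
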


\begin{proof}
    Since we can consider sets of points as a special case of unions of lines and points, the result follows immediately.
\end{proof}

\begin{theorem}
    Let $I$ be an ideal of a union of lines and ACM points in $\mathbb{P}^{1}\times\mathbb{P}^{1}$. Let $J$ and $K$ be ideals obtained by partitioning the minimal generators of $I$. Then $J\cap K$ is an ideal of a union of lines and ACM points.
\end{theorem}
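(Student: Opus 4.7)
The plan is to reduce to the case where $I$ is the ideal of an ACM set of points, then compute $J \cap K$ using colon ideals and the Ferrers diagram structure.

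Write $I = I_\mathbb{W}$ for a union of lines and ACM points $\mathbb{W}$. By Theorem \ref{product}, $I = L \cdot I_{\mathbb{X}_{\mathbb{W}}}$ where $L$ is the product of the line forms, and by Corollary \ref{isACM} the minimal generators of $I$ are precisely $L$ times those of $I_{\mathbb{X}_{\mathbb{W}}}$. The partition of $\mathcal{G}(I)$ thus descends to a partition of $\mathcal{G}(I_{\mathbb{X}_\mathbb{W}})$, and in the UFD $R$ we have $J \cap K = L \cdot (J' \cap K')$ where $J', K'$ are the corresponding partial ideals. Hence I may assume $I = I_\mathbb{X}$ for an ACM set $\mathbb{X}$. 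By Theorem \ref{subsetisACM}, $J = D_J \cdot I_{\mathbb{Y}_J}$ and $K = D_K \cdot I_{\mathbb{Y}_K}$ where $D_J, D_K$ are products of some $H_{A_i}$'s and $V_{B_j}$'s and $\mathbb{Y}_J, \mathbb{Y}_K$ are ACM. Setting $D = \gcd(D_J, D_K)$, $E = D_J/D$, $F = D_K/D$ gives $\gcd(E, F) = 1$, and a direct UFD argument (if $h = D_J a = D_K b \in J \cap K$ then $Ea = Fb$, and coprimality forces $F \mid a$ and $E \mid b$) yields
\[J \cap K = DEF \cdot \bigl[(I_{\mathbb{Y}_J} : F) \cap (I_{\mathbb{Y}_K} : E)\bigr].\]
Iterating Lemma \ref{lem:intersectpts+line}, both colons are defining ideals of ACM sets $\mathbb{Y}_J \setminus F$ and $\mathbb{Y}_K \setminus E$, so $J \cap K = DEF \cdot I_\mathbb{Z}$ where $\mathbb{Z} := (\mathbb{Y}_J \setminus F) \cup (\mathbb{Y}_K \setminus E)$.

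The crux is showing $\mathbb{Z}$ is ACM. Write $DEF = H_{A_1}\cdots H_{A_{M^H}} V_{B_1}\cdots V_{B_{M^V}}$; note that $DEF$ is the lcm of $D_J$ and $D_K$, i.e., the product of every linear form appearing in either. I claim both $\mathbb{Y}_J \setminus F$ and $\mathbb{Y}_K \setminus E$ are Ferrers-closed in the common sub-grid $\{(A_k, B_l) : k > M^H,\, l > M^V\}$: indeed, $\mathbb{Y}_J$ is Ferrers-closed among its own rulings (which start just after $D_J$), and removing the lines of $F$ (the $H$'s and $V$'s in $D_K$ but not in $D_J$) only eliminates points with index $\le M^H$ or $\le M^V$, so the Ferrers-closure property persists above $(M^H, M^V)$; the same holds for $\mathbb{Y}_K \setminus E$ by symmetry. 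The union of two Ferrers-closed subsets of the same sub-grid is again Ferrers-closed (the union of two leftmost prefixes is the longer prefix), so $\mathbb{Z}$ is a Ferrers diagram and thus ACM by Remark \ref{ferrersdiagram}. Combining, $J \cap K = L \cdot DEF \cdot I_\mathbb{Z}$ is the defining ideal of the union of the lines in $L \cdot DEF$ and the ACM set of points $\mathbb{Z}$, since the rulings used by $\mathbb{Z}$ are disjoint from those appearing in $L \cdot DEF$.

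The main obstacle is the Ferrers-closure verification: the precise interaction between the colon operation and the Ferrers structure requires careful case-analysis depending on which of $D_J$ or $D_K$ dominates on the $H$-side and on the $V$-side, together with handling degenerate cases where $\mathbb{Y}_J \setminus F$ or $\mathbb{Y}_K \setminus E$ becomes empty (in which case $\mathbb{Z}$ degenerates to the other piece or to the empty set, both of which are still ACM).
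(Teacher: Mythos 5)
Your proof is correct and follows essentially the same skeleton as the paper's: reduce to the case of an ACM set of points, pull the shared line factors out of $J$ and $K$, and reduce the problem to showing that a union of two ACM (Ferrers) sets on the same sub-grid is again ACM. The route through the middle step is packaged differently: you compute $J\cap K$ via the coprimality of $E=D_J/\gcd$ and $F=D_K/\gcd$ in the UFD $R$ and arrive at the colon formula $J\cap K = DEF\cdot\bigl[(I_{\mathbb{Y}_J}:F)\cap(I_{\mathbb{Y}_K}:E)\bigr]$, whereas the paper expresses the line factors as intersections of the corresponding principal ideals and peels them off one at a time with Lemma \ref{lem:intersectpts+line}. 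These are equivalent, and both deliver the same intermediate object $\mathbb{Z}=(\mathbb{Y}_J\setminus F)\cup(\mathbb{Y}_K\setminus E)$. For the final step the paper appeals to Lemma \ref{lem:intersect}, which literally requires the two intermediate ideals to share the same pure-$H$ and pure-$V$ generators, a hypothesis that can fail (e.g.\ for $\mathcal{G}(I)=\{V_1V_2V_3,\,H_1V_1V_2,\,H_1H_2V_1,\,H_1H_2H_3\}$ with $A=\{g_1,g_3\}$, $B=\{g_2,g_4\}$); your direct down-closure argument (a union of two down-closed sets on a common grid is down-closed, hence the union of two Ferrers diagrams in the same ordering is Ferrers) sidesteps that and is the cleaner way to close the gap. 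Two small clarifications worth making if you write this up: (i) both $\mathbb{Y}_J\setminus F$ and $\mathbb{Y}_K\setminus E$ land in the sub-grid indexed by $k>M^H$, $l>M^V$ because the rulings of $F$ precisely excise the rows/columns of $\mathbb{Y}_J$ with index at most $M^H$ or $M^V$ (your phrasing ``among its own rulings, which start just after $D_J$'' is a bit loose since $D_J$ and $DEF$ need not coincide), and (ii) when $I_{\mathbb{Y}_J}:F$ or $I_{\mathbb{Y}_K}:E$ equals $R$ the formula degenerates gracefully to $J\cap K = DEF\cdot I_{\mathbb{Y}_K\setminus E}$ (or $DEF\cdot R$ if both do), which are still ideals of unions of lines and ACM points.
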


\begin{proof}
As in Theorem \ref{subsetisACM}, we can assume that $I$ is
an ACM set of points $\mathbb{X}$ 
in $\mathbb{P}^1 \times \mathbb{P}^1$, i.e., $I=I_{\mathbb{X}}$ .  Consequently,
we can assume that there are horizontal and vertical
rulings $H_1,\ldots,H_a$ and $V_1 \ldots,V_b$ that
minimally contain $\mathbb{X}$.  Also, by Theorem
\ref{gens}, all the generators of $I_{\mathbb{X}}$ are 
constructed from products of the $H_i$'s and $V_j$'s.

If $J$ and $K$ are ideals obtained by partitioning the 
minimal generators of $I_{\mathbb{X}}$, then both $J$ and 
$K$ are ideals of unions of lines and ACM sets of points
in $\mathbb{P}^1 \times \mathbb{P}^1$, by Theorem \ref{subsetisACM}.
In particular, there are $i,i',j$ and $j'$ and ACM set of points
$\mathbb{W}$ and $\mathbb{W}'$ such that
\begin{eqnarray*}
J & =& H_1\cdots H_iV_1\cdots V_j I_{\mathbb{W}} ~~\mbox{and}~~ \\
K & =& H_1\cdots H_{i'}V_1 \cdots V_{j'} I_{\mathbb{W'}}.
\end{eqnarray*}
Observe that $\mathbb{W}$ is contained in the rulings
defined by $H_{i+1},\ldots,H_a$ and $V_j,\ldots,V_b$,
while $\mathbb{W}'$ is contained in the rulings defined by
$H_{i'+1},\ldots,H_a$ and $V_{j'+1},\ldots,V_b$.
Now, since 
\begin{eqnarray*}
J & =& \langle H_1 \rangle \cap \cdots \cap \langle H_i \rangle \cap \langle V_1 \rangle \cap \cdots \cap \langle V_j \rangle \cap I_{\mathbb{W}} ~~\mbox{and}~~\\
K & =& \langle H_1 \rangle \cap \cdots \cap
\langle H_{i'}\rangle \cap \langle V_1 \rangle \cap  \cdots \cap
\langle V_{j'}\rangle \cap  I_{\mathbb{W'}},
\end{eqnarray*}
we have
$$J \cap K = \langle H_1 \rangle \cap \cdots \cap
\langle H_{\max\{i,i'\}} \rangle \cap \langle V_1 \rangle
\cap \cdots \cap \langle V_{\max\{j,j'\}} \rangle \cap I_{\mathbb{W}} \cap I_{\mathbb{W}'}.$$
By repeatedly applying Lemma \ref{lem:intersectpts+line}, we get
$$J \cap K = H_1\cdots H_{\max\{i,i'\}}V_1\cdots V_{\max\{j,j'\}}
(I_{\mathbb{W}\setminus Q} \cap I_{\mathbb{W'} \setminus Q})$$
where 
$$Q = H_1 \cup \cdots \cup H_{\max\{i,i'\}} \cup 
V_1 \cup \cdots \cup V_{\max\{j,j'\}}.$$
(We recall that we are abusing notation and letting $H_i$ and
$V_j$ denote both the linear form and the ruling).

By Theorem \ref{lem:intersectpts+line}, both the sets 
$\mathbb{W} \setminus Q$ and $\mathbb{W}' \setminus Q$ are 
ACM sets of points in $\mathbb{P}^1 \times \mathbb{P}^1$.  
Moreover, by construction, both sets of points are contained in
the rulings defined by $H_{\max\{i,i'\}+1},\ldots,H_a$
and $V_{\max\{j,j'\}+1},\ldots,V_b$.

The final step of the proof is to note that $(I_{\mathbb{W}\setminus Q} \cap I_{\mathbb{W'} \setminus Q})$ is the ideal
of an ACM set of points in $\mathbb{P}^1 \times \mathbb{P}^1$
by Lemma \ref{lem:intersect}.  To verify this, one needs to
note that when constructing the ideals
$I_{\mathbb{W}\setminus Q}$ and $ I_{\mathbb{W'} \setminus Q}$,
we will have two ideals which are generated by 
products of $H_{\max\{i,i'\}+1},\ldots,H_a$
and $V_{\max\{j,j'\}+1},\ldots,V_b$ that satisfy the hypotheses
of Lemma \ref{gens=>acm}, i.e., there are four sequences of increasing
integers alongs with these rulings that allow one to construct
$I_{\mathbb{W}\setminus Q}$ and $ I_{\mathbb{W'} \setminus Q}$. \end{proof}

%%%%%%%%%%%%%%%%%%%%%%%%%%%%%%%%%%%%%%%%%%%%%%%%%%%%%%%%%%%%%%%%%%%%
\section{Betti Splittings}\label{sec:Betti}

In this section we discuss the main result of the paper namely, that Betti splittings for ideals of unions of lines and ACM points exist. We begin by reminding the reader of the definition of Betti splitting as introduced by Francisco, H\`a, and Van Tuyl \cite{FHVT2009}.

\begin{definition}
Let $I$ be a (multi-) homogenoues ideal over a (multi-) graded ring $R$, with a set of minimal generators $\mathcal{G}(I)$.
Let $\mathcal{G}(I)=A\sqcup B$ be a partition of the generators of $I$ into non-empty sets $A$ and $B$. Let $J=\langle A\rangle$ and $K=\langle B\rangle$. We call $I = J + K$ a {\it Betti splitting} of $I$ if 
\[\beta_{i,\delta}(I)=\beta_{i,\delta}(J)+\beta_{i,\delta}(K)+\beta_{i-1,\delta}(J\cap K)\] for all non-negative integers $i$ and all (multi-) degrees $\delta$, where we take $\beta_{-1,\delta}(J\cap K)=0$
\end{definition}

From Section \ref{sec:ACM}, we know that when $I$ is an ideal of unions of lines and ACM points and $I=J+K$ is a splitting then $J$, $K$ and $J\cap K$ are all ideals of unions of lines and ACM points. This is enough to characterize the situations where $I=J+K$ is a Betti splitting:   

\begin{theorem}\label{splitting}
    Let $I$ be an ideal of unions of lines and ACM points in $\mathbb{P}^{1}\times\mathbb{P}^{1}$. Let $J$ and $K$ be two non-zero ideals obtained by partitioning the minimal generators of $I$. Then $I=J+K$ is a Betti splitting if and only if $J\cap K$ is principal, that is, $|\mathcal{G}(J\cap K)|=1$.
\end{theorem}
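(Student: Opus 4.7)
The plan is to combine the structural results of Section~\ref{sec:ACM} with the long exact sequence in $\mathrm{Tor}(-,\mathbb{C})$ induced by the Mayer--Vietoris short exact sequence
$$0 \longrightarrow J \cap K \longrightarrow J \oplus K \longrightarrow I \longrightarrow 0.$$
By Theorem~\ref{subsetisACM} together with the preceding theorem, each of $I$, $J$, $K$, and $J \cap K$ is an ideal of a union of lines and ACM points. Hence, by Theorem~\ref{resolution} and Corollary~\ref{isACM}, each has projective dimension at most one, and Corollary~\ref{b1b2} (extended to lines via Corollary~\ref{isACM}, with the principal case trivial) yields the uniform identity $\beta_0 - \beta_1 = 1$ for each of the four ideals.

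For the forward direction, assume $I = J+K$ is a Betti splitting. The splitting identity at homological degree $i=2$ reads
$$\beta_{2,\delta}(I) = \beta_{2,\delta}(J) + \beta_{2,\delta}(K) + \beta_{1,\delta}(J\cap K).$$
The left side and the first two terms on the right vanish by the projective dimension bound, forcing $\beta_{1}(J\cap K) = 0$; combined with $\beta_0 - \beta_1 = 1$, this gives $\beta_0(J\cap K) = 1$, so $J\cap K$ is principal.

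For the reverse direction, suppose $J \cap K$ is principal, so $\mathrm{Tor}_1(J\cap K, \mathbb{C}) = 0$. Using also that $\mathrm{Tor}_2(I, \mathbb{C}) = 0$ from the projective dimension bound, the long exact sequence collapses in each bidegree $\delta$ to
\begin{align*}
0 \to \mathrm{Tor}_1(J)_\delta \oplus \mathrm{Tor}_1(K)_\delta &\to \mathrm{Tor}_1(I)_\delta \to \mathrm{Tor}_0(J\cap K)_\delta \\
&\to \mathrm{Tor}_0(J)_\delta \oplus \mathrm{Tor}_0(K)_\delta \to \mathrm{Tor}_0(I)_\delta \to 0.
\end{align*}
Because $\mathcal{G}(I) = A \sqcup B$ is a disjoint partition, the set $A$ minimally generates $J$ and $B$ minimally generates $K$ (otherwise some $a \in A$ would lie in $\mathfrak{m}J \subseteq \mathfrak{m}I$, contradicting its minimality in $I$). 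Therefore $\beta_{0,\delta}(J) + \beta_{0,\delta}(K) = \beta_{0,\delta}(I)$ in each bidegree, and since the rightmost map is surjective, it is an isomorphism. Exactness then forces the connecting $\mathrm{Tor}_0$ map to vanish, leaving a short exact sequence whose dimensions yield
$$\beta_{1,\delta}(I) = \beta_{1,\delta}(J) + \beta_{1,\delta}(K) + \beta_{0,\delta}(J\cap K).$$
Together with the automatic identity at $i=0$ and the vanishing at $i \geq 2$, this gives the Betti splitting.

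The main subtlety lies in tracking the long exact sequence bidegreewise and verifying that the connecting $\mathrm{Tor}_0$ map vanishes. Once the Section~\ref{sec:ACM} structural results are in place, this reduces to the short dimension count above that exploits the disjointness of the partition of minimal generators, and no genuinely new algebraic input is required.
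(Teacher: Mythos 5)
Your proof is correct and rests on the same structural facts that the paper uses (all four ideals $I$, $J$, $K$, $J\cap K$ are ideals of unions of lines and ACM points, hence have projective dimension at most one and satisfy $\beta_0 - \beta_1 = 1$), but the execution is different. The paper's proof is a short total-count argument: it cites the standard inequality $\beta_{1,\delta}(I) \le \beta_{1,\delta}(J) + \beta_{1,\delta}(K) + \beta_{0,\delta}(J\cap K)$ (which comes from the Mayer--Vietoris sequence but is not re-derived), sums over all $\delta$, and substitutes $\beta_1 = \beta_0 - 1$ for $I$, $J$, $K$ together with $\beta_0(I) = \beta_0(J) + \beta_0(K)$ to reduce to the numerical statement $1 \le \beta_0(J\cap K)$ with equality iff $J\cap K$ is principal; since the inequality holds bidegree-wise, total equality forces bidegree-wise equality. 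You instead run the long exact sequence in $\mathrm{Tor}$ explicitly, extract the forward direction from the $i=2$ level, and prove the converse by showing the connecting $\mathrm{Tor}_0$ map vanishes via the dimension count $\beta_{0,\delta}(J) + \beta_{0,\delta}(K) = \beta_{0,\delta}(I)$. Your route is somewhat longer but arguably more self-contained: it rederives the needed inequality rather than quoting it, and it explicitly verifies the $i \ge 2$ equations of the Betti splitting, which the paper leaves implicit. You also correctly flag the need to extend Corollary~\ref{b1b2} from ACM points to unions of lines and ACM points (and to principal ideals), a point the paper glosses over. Both proofs are valid.
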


\begin{proof}
    Since the ideals $I$, $J$, $K$ and $J\cap K$ are all ideals of unions of lines and ACM points we know that they have total Betti numbers as follows: Let $n$ be the number of generators of $I$ and let $m$ be the number of generators of $J$. Clearly $\beta_{0,j}(I)=\beta_{0,j}(J)+\beta_{0,j}(K)$ by definition.
    Recall that we have $\beta_{1,j}(I)\leq \beta_{1,j}(J)+\beta_{1,j}(K)+\beta_{0,j}(J\cap K)$. However for ACM ideals we have total Betti numbers $\beta_{1}(I)=\beta_{0}(I)-1$ by Corollary \ref{b1b2}. Thus \[\beta_{0}(I)-1\leq \beta_{0}(J)-1+\beta_{0}(K)-1+\beta_{0}(J\cap K).\] Thus we have equality if and only if $\beta_{0}(J\cap K)=1$.
\end{proof}

This characterization turns out to be quite useful when paired with some additional constructions which will make it clear exactly when it is the case that $|\mathcal{G}(J\cap K)|=1$. We start by providing an extension of Definition \ref{CXVX} to ideals of unions of lines and ACM points. After that we will show how a particular partition of the generators in $\mathcal{G}(I)$ will allow us to determine the cardinality of $\mathcal{G}(J\cap K)$.

\begin{definition}\label{standard_order}
    Using the minimal set of homogeneous generators given in Proposition \ref{gens} we define the {\it standard order} of generators to be the order according to the leading term of each generator given by the lexicographic order $y_{0}>y_{1}>x_{0}>x_{1}$ (i.e. we order the generators according to the order of their leading terms under lexicographic order). We then also define the tuple $\tau$ which has as its entries the generators in the standard order.
\end{definition}

\begin{remark}
    This order is essentially ordering the elements by the number of $V_{B_{i}}$ present in the product which defines the generator. The elements which have more $V_{B_{i}}$'s are first and the element which is entirely a product of the horizontal rulings $H_{A_{1}}\cdots H_{A_{h}}$ is last. 
\end{remark}

\begin{definition}\label{cut_def}
Let $X = \{\tau_1,\ldots,\tau_n\}$ be a set of $n$-distinct elements
and let $\tau = (\tau_1,\ldots,\tau_n)$ be an $n$-tuple of the
elements of $X$ (so $\tau$ can viewed as a fixed ordering of the
elements of $X$).  Let $X = A \sqcup B$ be any partition of $X$
(where $A$ or $B$ is allowed to be empty).  We say $X = A \sqcup B$
is a $0$-partition of $X$ with respect to $\tau$ if either $A$ or
$B$ is empty. 
Otherwise, we say $X = A \sqcup B$ is an 
{\it $r$-cut
partition of $X$ with respect to $\tau$} if there are 
$r$ distinct integers $j_1,\ldots,j_r \in \{1,\ldots,n\}$ with
$j_1 < \cdots < j_r$ such that 
\begin{eqnarray*}
    A &=& \{\tau_1,\tau_2,\ldots,\tau_{j_1}\} \cup \{\tau_{j_2+1},
    \ldots \tau_{j_3}\} \cup \cdots \\
    B &= & \{\tau_{j_1+1},\ldots,\tau_{j_2}\} \cup \{\tau_{j_3+1},
    \ldots,\tau_{j_4}\} \cup \cdots
\end{eqnarray*}
In particular, we call a partition $X=A\sqcup B$, a {\it 1-cut partition of $X$ with respect to $\tau$}, if there is some $j \in \{1,\ldots,n\}$
such that
$A=\{\tau_{i}:i\le j\}$ and $B=\{\tau_{i}:i>j\}$.  
\end{definition}

Informally, we are writing the elements of $X$ in the order
given by $\tau$.  We then place a divider $|$ or ``cut'' between
$\tau_i$ and $\tau_{i+1}$ if $\tau_i \in A$ and $\tau_{i+1} \in B$, or
$\tau_i \in B$ and $\tau_{i+1} \in A$.  Then $X = A \sqcup B$
is an $r$-cut partition if we need $r$ dividers.

\begin{example}
    Consider the distinct elements $X=\{a,b,c,d,e,f,g,h\}$ ordered in
    that natural way, i.e., $\tau=(a,b,c,d,e,f,g,h)$.
    \begin{itemize}
        \item  Consider the partition $X = A \sqcup B = \{a,b,c,f\}
        \sqcup \{d,e,g,h\}$.   We list out the elements of $X$  in the order given by $\tau$, but we put a divider between
        elements if the two elements do not both belong to $A$ or $B$.  In
        this case, we have
        $$a~~b~~c~~|~~d~~e~~|~~f~~|~~g~~h,$$
        since $c \in A$ and $d \in B$, $e \in B$ and $f \in A$,
        and $f \in A$ and $g \in B$.  So our partition is
        a $3$-cut partition of $X$ with respect to $\tau$.
        \item  The partition $X = 
        \{a,b,c,d\} \sqcup \{e,f,g,h\}$ is a 1-cut partition since we only use one divider, that is,
        $a~~b~~c~~d~~|~~e~~f~~g~~h.$
    \end{itemize}
\end{example}

\begin{theorem}\label{cuttheorem}
    Let $I_{\mathbb{W}}$ be an ideal of union of lines and ACM points in $\mathbb{P}^{1}\times\mathbb{P}^{1}$ whose generators have been ordered using the standard order defined in Definition \ref{standard_order} and $\tau$ being the corresponding sequence of generators. 
       Suppose that 
    $\mathcal{G}(I_{\mathbb{W}}) = A\sqcup B$ is an $r$-cut partition
    of the minimal generators of $I_{\mathbb{W}}$ with respect
    to $\tau$. If $J=\langle A\rangle$ and $K=\langle B\rangle$, then $J\cap K$ is minimally generated by $r$ elements.
    \end{theorem}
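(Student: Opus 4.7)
The plan is to identify the $r$ minimal generators of $J\cap K$ explicitly, one for each cut, as the least common multiples $\lcm(g_{j_k},g_{j_k+1})$ across each cut position.

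First I would reduce to the case where $\mathbb{W}$ is an ACM set of points. By Theorem~\ref{product} and Corollary~\ref{isACM}, write $I_{\mathbb{W}} = D\cdot I_{\mathbb{X}}$, where $D$ is the product of the horizontal and vertical line components of $\mathbb{W}$ and $\mathbb{X}$ is the underlying ACM set of points. Because $D$ is a nonzerodivisor, the partition $A\sqcup B$ of $\mathcal{G}(I_{\mathbb{W}})$ induces a corresponding partition $A_0\sqcup B_0$ of $\mathcal{G}(I_{\mathbb{X}})$ satisfying $J = DJ_0$, $K = DK_0$, and $J\cap K = D(J_0\cap K_0)$, with equal numbers of minimal generators. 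Since multiplication by $D$ augments every generator by the same factors, it preserves both the standard order and the cut positions, so we may assume $\mathbb{W}=\mathbb{X}$ is ACM.

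By Theorem~\ref{gens} and Remark~\ref{drops}, in the standard order the generators take the form $g_i = H_{A_1}\cdots H_{A_{t_i}}V_{B_1}\cdots V_{B_{s_i}}$ with $0 = t_1 < t_2 < \cdots < t_n = h$ and $v = s_1 > s_2 > \cdots > s_n = 0$ both strictly monotone. For each cut index $j_k$ ($k=1,\ldots,r$), set
\[
m_k := H_{A_1}\cdots H_{A_{t_{j_k+1}}}V_{B_1}\cdots V_{B_{s_{j_k}}} = \lcm(g_{j_k},g_{j_k+1}).
\]
By Definition~\ref{cut_def}, exactly one of $g_{j_k}, g_{j_k+1}$ lies in $A$ and the other in $B$, so $m_k \in J\cap K$. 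Pairwise incomparability under divisibility is immediate: for $k<k'$, strict monotonicity gives $t_{j_k+1}<t_{j_{k'}+1}$ and $s_{j_k}>s_{j_{k'}}$, so neither $m_k$ nor $m_{k'}$ divides the other.

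The final and main step is to show that $\{m_1,\ldots,m_r\}$ is actually a minimal generating set of $J\cap K$. By the theorem just established, $J\cap K$ is itself an ideal of a union of lines and ACM points, so Theorem~\ref{gens} and Remark~\ref{unique_gens} uniquely determine the form of its minimal generators as products in the $H_{A_i}$'s and $V_{B_j}$'s with specified bi-degrees. The combinatorial heart of the argument is that for any $(a,b)\in A\times B$ with $a<b$, we have $\lcm(g_a,g_b) = H_{A_1}\cdots H_{A_{t_b}}V_{B_1}\cdots V_{B_{s_a}}$, and the walk from index $a\in A$ to index $b\in B$ must traverse a cut, so there exists $j_k$ with $a\le j_k<b$; monotonicity then yields $t_{j_k+1}\le t_b$ and $s_{j_k}\le s_a$, so $m_k\mid \lcm(g_a,g_b)$. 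A symmetric argument handles $a>b$. The main obstacle here is tying the pairwise LCM calculation to the structural classification of $J\cap K$'s generators, since the $H_{A_i},V_{B_j}$ are linear forms rather than literal monomial indeterminates. Once that link is forged via the preceding theorem, the combinatorial analysis above identifies $\{(t_{j_k+1}, s_{j_k})\}_{k=1}^{r}$ as precisely the corner set of Theorem~\ref{gens} for $J\cap K$, giving $|\mathcal{G}(J\cap K)| = r$.
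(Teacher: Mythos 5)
Your approach is genuinely different from the paper's. The paper proves the theorem by induction on the number of generators $m$: it removes the last generator $\omega = H_{A_1}\cdots H_{A_h}$, applies the inductive hypothesis to the truncated ideal $\hat{I}$, and then argues by careful case analysis (depending on whether $\omega$ joins the same side as the adjacent generator $\sigma$) that the intersection is unchanged or gains exactly one new generator $V_{B_1}\cdots V_{B_{v_k}}\omega$. The paper's argument stays entirely inside the UFD-and-factorization framework and never needs to name all $r$ generators of $J\cap K$ at once. You instead identify the candidate generators all at once as the pairwise LCMs $m_k=\lcm(g_{j_k},g_{j_k+1})$ across the cuts, and try to show that this set both generates and is irredundant. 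The explicit formula $m_k = H_{A_1}\cdots H_{A_{t_{j_k+1}}}V_{B_1}\cdots V_{B_{s_{j_k}}}$ is correct and matches the paper's worked example (indeed, the paper's picture of $K$ there is missing the $H_4$ row, so the $\alpha$-tuple $(3,2,1,1,1)$ should read $(3,2,2,1,1)$, but the generator bidegrees and count agree with your formula).

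The gap, which you partially flag yourself, is real and is where the hard work lies: you never actually establish that $J\cap K = \langle m_1,\ldots,m_r\rangle$. You show (i) each $m_k\in J\cap K$, (ii) the $m_k$ are pairwise incomparable under divisibility, and (iii) every LCM $\lcm(g_a,g_b)$ with $g_a\in A$, $g_b\in B$ is divisible by some $m_k$. But $J$ and $K$ are \emph{not} monomial ideals, so the monomial identity $J\cap K = \langle \lcm(g_a,g_b):g_a\in\mathcal{G}(J),\,g_b\in\mathcal{G}(K)\rangle$ is not available and (iii) by itself does not close the inclusion $J\cap K\subseteq\langle m_1,\ldots,m_r\rangle$. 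Invoking the unnumbered structure theorem that $J\cap K$ is an ideal of a union of lines and ACM points, plus Theorem~\ref{gens} and Remark~\ref{unique_gens}, gives the \emph{form} of the generators of $J\cap K$ but still not their \emph{bidegrees} or their \emph{number}, so the phrase ``once that link is forged'' is covering for the actual content of the theorem. To complete your route you would need either to reprove the monomial-style intersection formula in this UFD-of-two-coprime-families setting (a plausible but nontrivial lemma), or to explicitly compute the $\alpha$-tuple of $J\cap K$ by tracking the factorizations through Lemma~\ref{lem:intersectpts+line} and Lemma~\ref{lem:intersect} as in the proof of the structure theorem, and then read off the drop count. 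Either of those would be a legitimate alternative to the paper's induction, but as written the proof has a hole at its central step.
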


\begin{proof}
    We proceed by induction on $m$, the number of generators of $I_{\mathbb{W}}$. Suppose $m=1$. In such a case $I_{\mathbb{W}}$ is simply the intersection of lines and only a 0-cut partition is possible, that is, we must have either $K=0$ or $J=0$ and so $J\cap K=0$.  

    Now suppose $m =k+2 > 1$.  As in Theorem \ref{subsetisACM}, we can assume that $I_{\mathbb{W}}$ is an ideal of an ACM set of points $\mathbb{X}$ in $\mathbb{P}^1 \times \mathbb{P}^1$, so we will write $I_{\mathbb{W}}=I_{\mathbb{X}}$. Using the notation of Theorem \ref{gens} we have a minimal generating set given by: \[\mathcal{G}(I_{\mathbb{X}})=\{H_{A_{1}}\cdots H_{A_{h}},V_{B_{1}}\cdots V_{B_{v}}\}\cup \{H_{A_{1}}\cdots H_{A_{i}}\cdot V_{B_{1}}\cdots V_{B_{\alpha_{i+1}}}:\alpha_{i+1}<\alpha_{i}\}.\] We order the generators according to the order given in Definition \ref{standard_order}. We then construct $\tau$ as in Definition \ref{cut_def}, and so have 
    \[\tau=\left(V_{B_{1}}\cdots V_{B_{v}},H_{A_{1}}\cdots H_{A_{h_{1}}}\cdot V_{B_{1}}\cdots V_{B_{v_{1}}},H_{A_{1}}\cdots H_{A_{h_{2}}}\cdot V_{B_{1}}\cdots V_{B_{v_{2}}},\dots ,H_{A_{1}}\cdots H_{A_{h}} \right)\]
    where $\{h_1,\ldots,h_k\}=\{i:\alpha_{i+1}<\alpha_{i}\}$ such that $h_{1}<h_{2}<\cdots <h_{k} < h$ and where $\{v_1,\ldots,v_k\}=\{\alpha_{i+1}:\alpha_{i+1}<\alpha_{i}\}$ such that $v> v_{1}>v_{2}>\cdots >v_{k}$ (note these are defined by the $k$ ``drops'' in $\alpha$ and that $m=k+2$).  Note that when $m=2$, this forces $k=0$, in which
    we only have two generators.
   
    If we remove the final generator of $\tau$, which we will call $\omega=H_{A_{1}}\cdots H_{A_{h}}$, we have that the remaining generators generate an ideal of unions of lines and ACM points by Theorem \ref{subsetisACM}. This ideal, which we denote by $\hat{I}$, is generated by $m-1$ elements, and so by the induction hypothesis if $\mathcal{G}(\hat{I})=A\sqcup B$ is a partition  of a minimal set of generators $\mathcal{G}(\hat{I})$ of $\hat{I}$ with $J=\langle A\rangle$ and $K=\langle B\rangle$, then the number of generators of $J\cap K$ is equal to the cut number of $\mathcal{G}(\hat{I})=A\sqcup B$ with respect to $\hat{\tau}$
    where $\hat{\tau}$ is the first $m-1$ entries of $\tau$.

    There are two possibilities for partitions of the larger set which includes $\omega$. Either $\omega\in A$ or $\omega\in B$. The cut number will be determined by which set the second last element in $\tau$, which we will denote by $\sigma=H_{A_{1}}\cdots H_{A_{h_{k}}}\cdot V_{B_{1}}\cdots V_{B_{v_{k}}}$, belongs to. If $\omega$ and $\sigma$ belong
    to the same set, then the cut number remains the same.  Otherwise,
    the cut number increases by one.
    Without loss of generality assume that $\sigma\in A$.

    If we have the partition $\mathcal{G}(I_{\mathbb{X}})=\left(A\cup\{\omega\}\right)\sqcup B$, then the cut number with respect to $\tau$ is unchanged since $\sigma\in A$. Let $J'=\langle A\cup\{\omega\}\rangle$. We show that $J\cap K=J'\cap K$ and that in particular the minimal number of generators is equal. That $J\cap K\subseteq J'\cap K$ is clear.
    
    So suppose that $L\in J'\cap K$. Then \[L=Q_{1}+c\cdot\omega=Q_{2}\] where $Q_{1}\in J'$ and $c\in R$ and $Q_{2}\in K$. Since $\omega$ has the most $H_{A_{i}}$ in its product it follows that all other elements are divided by $V_{B_{1}}\cdots V_{B_{v_{k}}}$. Further since $\sigma\in A$ by assumption it follows that $V_{B_{1}}\cdots V_{B_{v_{k}}}$ is the greatest common divisor of the generators of $J'$ and hence we may write $Q_{1}=V_{B_{1}}\cdots V_{B_{v_{k}}}\cdot Q'_{1}$ where $V_{j}\nmid Q'_{1}$ for all $j$. Similarly since $\omega\notin B$ we can factor out at least $V_{B_{1}}\cdots V_{B_{v_{k}}}$ from any element of $K$ and so $Q_{2}=V_{B_{1}}\cdots V_{B_{v_{k}}}Q'_{2}$. Hence we have \[L=V_{B_{1}}\cdots V_{B_{v_{k}}}\cdot Q'_{1}+c\cdot\omega=V_{B_{1}}\cdots V_{B_{v_{k}}}Q'_{2}.\] Hence $c\cdot\omega=c'\cdot V_{B_{1}}\cdots V_{B_{v_{k}}}\omega$. We note that $V_{B_{1}}\cdots V_{B_{v_{k}}}\omega= H_{A_{h_{k}+1}}\cdots H_{A_{h}}\sigma\in J$  and so $L\in J\cap K$ and $J\cap K=J'\cap K$ as required. 

    Suppose instead we are in the case where $\omega$ is added to $B$. Here the cut number is increased by one since $\sigma\in A$ by assumption. We need to show that $J\cap K'$ 
    where $K' = \langle B \cup \{ \omega\}\rangle$ has one more generator than $J\cap K$. We will achieve this by showing that \[J\cap K'=J\cap K +\left<V_{B_{1}}\cdots V_{B_{v_{k}}}\omega\right>.\] Again we clearly have that $J\cap K\subseteq J\cap K'$. We recall that $V_{B_{1}}\cdots V_{B_{v_{k}}}\omega=H_{A_{h_{k}+1}}\cdots H_{A_{h}}\sigma$  
    and since $\sigma\in J$ and $\omega\in K'$ we have that $V_{B_{1}}\cdots V_{B_{v_{k}}}\omega\in J\cap K'$ and hence  \[J\cap K +\left<V_{B_{1}}\cdots V_{B_{v_{k}}}\omega\right>\subseteq J\cap K'.\] To prove the reverse inclusion again suppose that $L\in J\cap K'$. Then we may write \[L=Q_{1}=Q_{2}+c\cdot\omega\] where $Q_{1}\in J$, $Q_{2}\in K$ and $c\in R$.

Since $\omega\notin J$ and $\omega\notin K$ by the same argument used in the first case we may write $Q_{1}=V_{B_{1}}\cdots V_{B_{v_{k}}}Q'_{1}$ and $Q_{2}=V_{B_{1}}\cdots V_{B_{v_{k}}}Q'_{2}$ and so have \[L=V_{B_{1}}\cdots V_{B_{v_{k}}}\cdot Q'_{1}=V_{B_{1}}\cdots V_{B_{v_{k}}}Q'_{2}+c\cdot\omega.\] Once again we conclude that $c\cdot\omega=c'\cdot V_{B_{1}}\cdots V_{B_{v_{k}}}\omega$. 
Since $V_{B_{1}}\cdots V_{B_{v_{k}}}\omega=H_{A_{h_{k}+1}}\cdots H_{A_{h}}\sigma\in J$ 
we can subtract $c\cdot\omega$ from $Q_{2}+c\cdot\omega$ and remain in $J$ thus $Q_{2}\in J$ and so $L\in J\cap K +\left<V_{B_{1}}\cdots V_{B_{v_{k}}}\omega\right>$, and so \[J\cap K'=J\cap K +\left<V_{B_{1}}\cdots V_{B_{v_{k}}}\omega\right>\] as required. 

Note that the the generator $V_{B_{1}}\cdots V_{B_{v_{k}}}\omega$ 
is not in $J \cap K$ since $\omega = H_{A_1}\cdots H_{A_h}$, and
no generator in $J\cap K$ is divided by this term by bidegree 
considerations.  Thus we have shown that the number of generators of the intersection corresponds to the cut number and complete the induction. 
\end{proof}

In the following example we will illustrate a more geometric way of viewing the above result relying on the arrangement of unions of lines and ACM sets of points in $\mathbb{P}^{1}\times\mathbb{P}^{1}$.

\begin{example}
Suppose that $\mathbb{X}$ is the ACM set of 20 points
of Example \ref{runex}.  As noted earlier: \small\begin{align*}
    I_{\mathbb{X}} =\langle&V_{1}V_{2}V_{3}V_{4}V_{5}, H_{1}V_{1}V_{2}V_{3}V_{4}, H_{1}H_{2}V_{1}V_{2}V_{3}, H_{1}H_{2}H_{3}H_{4}V_{1}V_{2}, \\
    & H_{1}H_{2}H_{3}H_{4}H_{5}H_{6}V_{1}, H_{1}H_{2}H_{3}H_{4}H_{5}H_{6}H_{7}\rangle
\end{align*}
\normalsize
For this set of generators $\tau$ is given by: \small\[\left( V_{1}V_{2}V_{3}V_{4}V_{5}, H_{1}V_{1}V_{2}V_{3}V_{4}, H_{1}H_{2}V_{1}V_{2}V_{3}, H_{1}H_{2}H_{3}H_{4}V_{1}V_{2}, H_{1}H_{2}H_{3}H_{4}H_{5}H_{6}V_{1}, H_{1}H_{2}H_{3}H_{4}H_{5}H_{6}H_{7}\right).\] 
\normalsize
We will partition the first 5 generators like we describe in the  induction step in the proof of Theorem \ref{cuttheorem}.
In particular, let \small\begin{eqnarray*}
A &=&\{V_{1}V_{2}V_{3}V_{4}V_{5},H_{1}H_{2}V_{1}V_{2}V_{3},H_{1}H_{2}H_{3}H_{4}H_{5}H_{6}V_{1}\}, \\ 
B&=&\{H_{1}V_{1}V_{2}V_{3}V_{4},H_{1}H_{2}H_{3}H_{4}V_{1}V_{2}\}, 
~~\text{and}~~\\
\omega &=& H_{1}H_{2}H_{3}H_{4}H_{5}H_{6}H_{7}.
\end{eqnarray*}
\normalsize
Note that this partition has a cut number of 4. Let $J=\langle A\rangle$ and $K=\langle B\rangle$. From Theorem \ref{subsetisACM} and Lemma \ref{gens=>acm} and the fact that for algebraic sets $X_{1}$ and $X_{2}$ we have $I(X_{1})\cap I(X_{2})=I(X_{1}\cup X_{2})$ we can represent the unions of lines and points as follows:
\small
\[
\begin{tikzpicture}[scale =.5]

\draw[fill=black](1,0) circle (5 pt);
\draw[fill=black](2,0) circle (5 pt);
\draw[fill=black](3,0) circle (5 pt);
\draw[fill=black](4,0) circle (5 pt);

\draw[fill=black](1,-1) circle (5 pt);
\draw[fill=black](2,-1) circle (5 pt);
\draw[fill=black](3,-1) circle (5 pt);
\draw[fill=black](4,-1) circle (5 pt);

\draw[fill=black](1,-2) circle (5 pt);
\draw[fill=black](2,-2) circle (5 pt);

\draw[fill=black](1,-3) circle (5 pt);
\draw[fill=black](2,-3) circle (5 pt);

\draw[fill=black](1,-4) circle (5 pt);
\draw[fill=black](2,-4) circle (5 pt);

\draw[fill=black](1,-5) circle (5 pt);
\draw[fill=black](2,-5) circle (5 pt);

\draw[densely dotted] (-.5,0) -- (4.5,0);
\draw[densely dotted] (-.5,-1) -- (4.5,-1);
\draw[densely dotted] (-.5,-2) -- (4.5,-2);
\draw[densely dotted] (-.5,-3) -- (4.5,-3);
\draw[densely dotted] (-.5,-4) -- (4.5,-4);
\draw[densely dotted] (-.5,-5) -- (4.5,-5);

\draw[very thick] (0,0.5) -- (0,-5.5);
\draw[densely dotted] (1,0.5) -- (1,-5.5);
\draw[densely dotted] (2,0.5) -- (2,-5.5);
\draw[densely dotted] (3,0.5) -- (3,-5.5);
\draw[densely dotted] (4,0.5) -- (4,-5.5);

\node at (0,.5) [above]{$V_1$};
\node at (1,.5) [above]{$V_2$};
\node at (2,.5) [above]{$V_3$};
\node at (3,.5) [above]{$V_4$};
\node at (4,.5) [above]{$V_5$};

\node at (-.5,0) [left]{$H_1$};
\node at (-.5,-1) [left]{$H_2$};
\node at (-.5,-2) [left]{$H_3$};
\node at (-.5,-3) [left]{$H_4$};
\node at (-.5,-4) [left]{$H_5$};
\node at (-.5,-5) [left]{$H_6$};

%%%%%%%%%%%%%%%%%%%%%%%%%%%%%%%%%%%%%%%%%%

\draw[fill=black](2+10,-1) circle (5 pt);
\draw[fill=black](3+10,-1) circle (5 pt);

\draw[fill=black](2+10,-2) circle (5 pt);
\draw[fill=black](3+10,-2) circle (5 pt);

\draw[very thick] (-.5+10,0) -- (3.5+10,0);
\draw[densely dotted] (-.5+10,-1) -- (3.5+10,-1);
\draw[densely dotted] (-.5+10,-2) -- (3.5+10,-2);

\draw[very thick] (0+10,0.5) -- (0+10,-2.5);
\draw[very thick] (1+10,0.5) -- (1+10,-2.5);
\draw[densely dotted] (2+10,0.5) -- (2+10,-2.5);
\draw[densely dotted] (3+10,0.5) -- (3+10,-2.5);

\node at (0+10,.5) [above]{$V_1$};
\node at (1+10,.5) [above]{$V_2$};
\node at (2+10,.5) [above]{$V_3$};
\node at (3+10,.5) [above]{$V_4$};

\node at (-.5+10,0) [left]{$H_1$};
\node at (-.5+10,-1) [left]{$H_2$};
\node at (-.5+10,-2) [left]{$H_3$};

%%%%%%%%%%%%%%%%%%%%%%%%%%%%%%%%%%%%%%%%%%%%%%%%%%%%%%%

\draw[fill=black](2+20,-1) circle (5 pt);
\draw[fill=black](3+20,-1) circle (5 pt);

\draw[fill=black](2+20,-2) circle (5 pt);
\draw[fill=black](3+20,-2) circle (5 pt);

\draw[fill=black](1+20,0) circle (5 pt);
\draw[fill=black](2+20,0) circle (5 pt);
\draw[fill=black](3+20,0) circle (5 pt);
\draw[fill=black](4+20,0) circle (5 pt);

\draw[fill=black](1+20,-1) circle (5 pt);
\draw[fill=black](2+20,-1) circle (5 pt);
\draw[fill=black](3+20,-1) circle (5 pt);
\draw[fill=black](4+20,-1) circle (5 pt);

\draw[fill=black](1+20,-2) circle (5 pt);
\draw[fill=black](2+20,-2) circle (5 pt);

\draw[fill=black](1+20,-3) circle (5 pt);
\draw[fill=black](2+20,-3) circle (5 pt);

\draw[fill=black](1+20,-4) circle (5 pt);
\draw[fill=black](2+20,-4) circle (5 pt);

\draw[fill=black](1+20,-5) circle (5 pt);
\draw[fill=black](2+20,-5) circle (5 pt);

\draw[very thick] (-.5+20,0) -- (4.5+20,0);
\draw[densely dotted] (-.5+20,-1) -- (4.5+20,-1);
\draw[densely dotted] (-.5+20,-2) -- (4.5+20,-2);
\draw[densely dotted] (-.5+20,-3) -- (4.5+20,-3);
\draw[densely dotted] (-.5+20,-4) -- (4.5+20,-4);
\draw[densely dotted] (-.5+20,-5) -- (4.5+20,-5);

\draw[very thick] (0+20,0.5) -- (0+20,-5.5);
\draw[very thick] (1+20,0.5) -- (1+20,-5.5);
\draw[densely dotted] (2+20,0.5) -- (2+20,-5.5);
\draw[densely dotted] (3+20,0.5) -- (3+20,-5.5);
\draw[densely dotted] (4+20,0.5) -- (4+20,-5.5);

\node at (0+20,.5) [above]{$V_1$};
\node at (1+20,.5) [above]{$V_2$};
\node at (2+20,.5) [above]{$V_3$};
\node at (3+20,.5) [above]{$V_4$};
\node at (4+20,.5) [above]{$V_5$};

\node at (-.5+20,0) [left]{$H_1$};
\node at (-.5+20,-1) [left]{$H_2$};
\node at (-.5+20,-2) [left]{$H_3$};
\node at (-.5+20,-3) [left]{$H_4$};
\node at (-.5+20,-4) [left]{$H_5$};
\node at (-.5+20,-5) [left]{$H_6$};

\node at (2,2.7) {$J$};
\node at (2+10,2.7) {$K$};
\node at (2+20,2.7) {$J\cap K$};

\end{tikzpicture}
\]
\normalsize
The arrangement of unions of lines and points corresponding to $J\cap K$ is: \small
\[\mathbb{W}_{1}=\{H_{2}\times V_{3},H_{2}\times V_{4},H_{2}\times V_{5},H_{3}\times V_{3},H_{3}\times V_{4},H_{4}\times V_{3},H_{5}\times V_{3},H_{6}\times V_{3},H_{1},V_{1},V_{2}\}.\] 
\normalsize
Then $\alpha_{\mathbb{X}_{\mathbb{W}_{1}}}=(3,2,1,1,1)$
and hence there will be 4 minimal generators 
of $J \cap K$ which agrees with our cut number. Let $\Lambda_{1}=H_{1}V_{1}V_{2}$ denote the products of lines in $\mathbb{W}_{1}$. According to Theorem \ref{gens}, Remark \ref{drops} and Theorem \ref{product} these generators will be: 

\begin{itemize}
    \item[i)] $g_{1}=\Lambda_{1}V_{3}V_{4}V_{5}$, which corresponds to the product of the vertical rulings of $\mathbb{X}_{\mathbb{W}_{1}}$ multiplied by the product of the lines in $\mathbb{W}_{1}$,
    \item[ii)] $g_{2}=\Lambda_{1}H_{2}V_{3}V_{4}$, which corresponds to the generator arising from $(\alpha_{\mathbb{X}_{\mathbb{W}_{1}}})_{1}>(\alpha_{\mathbb{X}_{\mathbb{W}_{1}}})_{2}$ multiplied by the product of the lines in $\mathbb{W}_{1}$,
    \item[iii)] $g_{3}=\Lambda_{1}H_{2}H_{3}V_{3}$, which corresponds to the generator arising from $(\alpha_{\mathbb{X}_{\mathbb{W}_{1}}})_{2}>(\alpha_{\mathbb{X}_{\mathbb{W}_{1}}})_{3}$ multiplied by the product of the lines in $\mathbb{W}_{1}$, and
    \item[iv)] $g_{4}=\Lambda_{1}H_{2}H_{3}H_{4}H_{5}H_{6}$ which corresponds to the product of the horizontal rulings of ${\mathbb{X}_{\mathbb{W}_{1}}}$ multiplied by the product of the lines in $\mathbb{W}_{1}$.
\end{itemize}

The representations of $J'=\langle A\cup \{\omega\}\rangle$, $K=\langle B\rangle$ and $J'\cap K$ are as follows: 
{\small
\[
\begin{tikzpicture}[scale =.5]

\draw[fill=black](0,0) circle (5 pt);
\draw[fill=black](1,0) circle (5 pt);
\draw[fill=black](2,0) circle (5 pt);
\draw[fill=black](3,0) circle (5 pt);
\draw[fill=black](4,0) circle (5 pt);

\draw[fill=black](0,-1) circle (5 pt);
\draw[fill=black](1,-1) circle (5 pt);
\draw[fill=black](2,-1) circle (5 pt);
\draw[fill=black](3,-1) circle (5 pt);
\draw[fill=black](4,-1) circle (5 pt);

\draw[fill=black](0,-2) circle (5 pt);
\draw[fill=black](1,-2) circle (5 pt);
\draw[fill=black](2,-2) circle (5 pt);

\draw[fill=black](0,-3) circle (5 pt);
\draw[fill=black](1,-3) circle (5 pt);
\draw[fill=black](2,-3) circle (5 pt);

\draw[fill=black](0,-4) circle (5 pt);
\draw[fill=black](1,-4) circle (5 pt);
\draw[fill=black](2,-4) circle (5 pt);

\draw[fill=black](0,-5) circle (5 pt);
\draw[fill=black](1,-5) circle (5 pt);
\draw[fill=black](2,-5) circle (5 pt);

\draw[fill=black](0,-6) circle (5 pt);

\draw[densely dotted] (0,.5) -- (0,-6.5);
\draw[densely dotted] (1,.5) -- (1,-6.5);
\draw[densely dotted] (2,.5) -- (2,-6.5);
\draw[densely dotted] (3,.5) -- (3,-6.5);
\draw[densely dotted] (4,.5) -- (4,-6.5);

\draw[densely dotted] (-.5,0) -- (4.5,0);
\draw[densely dotted] (-.5,-1) -- (4.5,-1);
\draw[densely dotted] (-.5,-2) -- (4.5,-2);
\draw[densely dotted] (-.5,-3) -- (4.5,-3);
\draw[densely dotted] (-.5,-4) -- (4.5,-4);
\draw[densely dotted] (-.5,-5) -- (4.5,-5);
\draw[densely dotted] (-.5,-6) -- (4.5,-6);

\node at (0,.5) [above]{$V_1$};
\node at (1,.5) [above]{$V_2$};
\node at (2,.5) [above]{$V_3$};
\node at (3,.5) [above]{$V_4$};
\node at (4,.5) [above]{$V_5$};

\node at (-.5,0) [left]{$H_1$};
\node at (-.5,-1) [left]{$H_2$};
\node at (-.5,-2) [left]{$H_3$};
\node at (-.5,-3) [left]{$H_4$};
\node at (-.5,-4) [left]{$H_5$};
\node at (-.5,-5) [left]{$H_6$};
\node at (-.5,-6) [left]{$H_7$};

%%%%%%%%%%%%%%%%%%%%%%%%%%%%%%%%%%%%%%%%%%%%%%%%%%%%%%%%%%%%

\draw[fill=black](2+10,-1) circle (5 pt);
\draw[fill=black](3+10,-1) circle (5 pt);

\draw[fill=black](2+10,-2) circle (5 pt);
\draw[fill=black](3+10,-2) circle (5 pt);

\draw[very thick] (-.5+10,0) -- (3.5+10,0);
\draw[densely dotted] (-.5+10,-1) -- (3.5+10,-1);
\draw[densely dotted] (-.5+10,-2) -- (3.5+10,-2);

\draw[very thick] (0+10,0.5) -- (0+10,-2.5);
\draw[very thick] (1+10,0.5) -- (1+10,-2.5);
\draw[densely dotted] (2+10,0.5) -- (2+10,-2.5);
\draw[densely dotted] (3+10,0.5) -- (3+10,-2.5);

\node at (0+10,.5) [above]{$V_1$};
\node at (1+10,.5) [above]{$V_2$};
\node at (2+10,.5) [above]{$V_3$};
\node at (3+10,.5) [above]{$V_4$};

\node at (-.5+10,0) [left]{$H_1$};
\node at (-.5+10,-1) [left]{$H_2$};
\node at (-.5+10,-2) [left]{$H_3$};

%%%%%%%%%%%%%%%%%%%%%%%%%%%%%%%%%%%%%%%%%%%%%%%%%%%%%

\draw[fill=black](0+20,0) circle (5 pt);
\draw[fill=black](1+20,0) circle (5 pt);
\draw[fill=black](2+20,0) circle (5 pt);
\draw[fill=black](3+20,0) circle (5 pt);
\draw[fill=black](4+20,0) circle (5 pt);

\draw[fill=black](0+20,-1) circle (5 pt);
\draw[fill=black](1+20,-1) circle (5 pt);
\draw[fill=black](2+20,-1) circle (5 pt);
\draw[fill=black](3+20,-1) circle (5 pt);
\draw[fill=black](4+20,-1) circle (5 pt);

\draw[fill=black](0+20,-2) circle (5 pt);
\draw[fill=black](1+20,-2) circle (5 pt);
\draw[fill=black](2+20,-2) circle (5 pt);

\draw[fill=black](0+20,-3) circle (5 pt);
\draw[fill=black](1+20,-3) circle (5 pt);
\draw[fill=black](2+20,-3) circle (5 pt);

\draw[fill=black](0+20,-4) circle (5 pt);
\draw[fill=black](1+20,-4) circle (5 pt);
\draw[fill=black](2+20,-4) circle (5 pt);

\draw[fill=black](0+20,-5) circle (5 pt);
\draw[fill=black](1+20,-5) circle (5 pt);
\draw[fill=black](2+20,-5) circle (5 pt);

\draw[fill=black](0+20,-6) circle (5 pt);

\draw[fill=black](2+20,-1) circle (5 pt);
\draw[fill=black](3+20,-1) circle (5 pt);

\draw[fill=black](2+20,-2) circle (5 pt);
\draw[fill=black](3+20,-2) circle (5 pt);

\draw[very thick] (-.5+20,0) -- (4.5+20,0);
\draw[densely dotted] (-.5+20,-1) -- (4.5+20,-1);
\draw[densely dotted] (-.5+20,-2) -- (4.5+20,-2);
\draw[densely dotted] (-.5+20,-3) -- (4.5+20,-3);
\draw[densely dotted] (-.5+20,-4) -- (4.5+20,-4);
\draw[densely dotted] (-.5+20,-5) -- (4.5+20,-5);
\draw[densely dotted] (-.5+20,-6) -- (4.5+20,-6);

\draw[very thick] (0+20,.5) -- (0+20,-6.5);
\draw[very thick] (1+20,.5) -- (1+20,-6.5);
\draw[densely dotted] (2+20,.5) -- (2+20,-6.5);
\draw[densely dotted] (3+20,.5) -- (3+20,-6.5);
\draw[densely dotted] (4+20,.5) -- (4+20,-6.5);

\node at (0+20,.5) [above]{$V_1$};
\node at (1+20,.5) [above]{$V_2$};
\node at (2+20,.5) [above]{$V_3$};
\node at (3+20,.5) [above]{$V_4$};
\node at (4+20,.5) [above]{$V_5$};

\node at (-.5+20,0) [left]{$H_1$};
\node at (-.5+20,-1) [left]{$H_2$};
\node at (-.5+20,-2) [left]{$H_3$};
\node at (-.5+20,-3) [left]{$H_4$};
\node at (-.5+20,-4) [left]{$H_5$};
\node at (-.5+20,-5) [left]{$H_6$};
\node at (-.5+20,-6) [left]{$H_7$};

\node at (2,2.7) {$J'$};
\node at (2+10,2.7) {$K$};
\node at (2+20,2.7) {$J'\cap K$};

\end{tikzpicture}
\]}
Denote the of unions of lines and points representing $J'\cap K$ by \small
\[\mathbb{W}_{2}=\{H_{2}\times V_{3},H_{2}\times V_{4},H_{2}\times V_{5},H_{3}\times V_{3},H_{3}\times V_{4},H_{4}\times V_{3},H_{5}\times V_{3},H_{6}\times V_{3},H_{1},V_{1},V_{2}\}.\]
\normalsize
Then $\alpha_{\mathbb{X}_{\mathbb{W}_{2}}}=(3,2,1,1,1)$ and hence by Remark \ref{drops} there will be 4 minimal generators which agrees with our cut number. In fact one can see the generators are identical to those of $J\cap K$, a fact which was made use of in the proof of Theorem \ref{cuttheorem}.

Finally we look at the representations of $J=\langle A\rangle$,   and $K'=\langle B\cup \{\omega\}\rangle$ and $J\cap K'$: 
\small
\[
\begin{tikzpicture}[scale =.5]

\draw[fill=black](1,0) circle (5 pt);
\draw[fill=black](2,0) circle (5 pt);
\draw[fill=black](3,0) circle (5 pt);
\draw[fill=black](4,0) circle (5 pt);

\draw[fill=black](1,-1) circle (5 pt);
\draw[fill=black](2,-1) circle (5 pt);
\draw[fill=black](3,-1) circle (5 pt);
\draw[fill=black](4,-1) circle (5 pt);

\draw[fill=black](1,-2) circle (5 pt);
\draw[fill=black](2,-2) circle (5 pt);

\draw[fill=black](1,-3) circle (5 pt);
\draw[fill=black](2,-3) circle (5 pt);

\draw[fill=black](1,-4) circle (5 pt);
\draw[fill=black](2,-4) circle (5 pt);

\draw[fill=black](1,-5) circle (5 pt);
\draw[fill=black](2,-5) circle (5 pt);

\draw[densely dotted] (-.5,0) -- (4.5,0);
\draw[densely dotted] (-.5,-1) -- (4.5,-1);
\draw[densely dotted] (-.5,-2) -- (4.5,-2);
\draw[densely dotted] (-.5,-3) -- (4.5,-3);
\draw[densely dotted] (-.5,-4) -- (4.5,-4);
\draw[densely dotted] (-.5,-5) -- (4.5,-5);

\draw[very thick] (0,0.5) -- (0,-5.5);
\draw[densely dotted] (1,0.5) -- (1,-5.5);
\draw[densely dotted] (2,0.5) -- (2,-5.5);
\draw[densely dotted] (3,0.5) -- (3,-5.5);
\draw[densely dotted] (4,0.5) -- (4,-5.5);

\node at (0,.5) [above]{$V_1$};
\node at (1,.5) [above]{$V_2$};
\node at (2,.5) [above]{$V_3$};
\node at (3,.5) [above]{$V_4$};
\node at (4,.5) [above]{$V_5$};

\node at (-.5,0) [left]{$H_1$};
\node at (-.5,-1) [left]{$H_2$};
\node at (-.5,-2) [left]{$H_3$};
\node at (-.5,-3) [left]{$H_4$};
\node at (-.5,-4) [left]{$H_5$};
\node at (-.5,-5) [left]{$H_6$};

%%%%%%%%%%%%%%%%%%%%%%%%%%%%%%%%%%%%%%%%%%%%%%%%%%%%

\draw[fill=black](0+10,-1) circle (5 pt);
\draw[fill=black](1+10,-1) circle (5 pt);
\draw[fill=black](2+10,-1) circle (5 pt);
\draw[fill=black](3+10,-1) circle (5 pt);

\draw[fill=black](0+10,-2) circle (5 pt);
\draw[fill=black](1+10,-2) circle (5 pt);
\draw[fill=black](2+10,-2) circle (5 pt);
\draw[fill=black](3+10,-2) circle (5 pt);

\draw[fill=black](0+10,-3) circle (5 pt);
\draw[fill=black](1+10,-3) circle (5 pt);

\draw[fill=black](0+10,-4) circle (5 pt);
\draw[fill=black](1+10,-4) circle (5 pt);

\draw[fill=black](0+10,-5) circle (5 pt);
\draw[fill=black](1+10,-5) circle (5 pt);

\draw[fill=black](0+10,-6) circle (5 pt);
\draw[fill=black](1+10,-6) circle (5 pt);

\draw[densely dotted] (0+10,.5) -- (0+10,-6.5);
\draw[densely dotted] (1+10,.5) -- (1+10,-6.5);
\draw[densely dotted] (2+10,.5) -- (2+10,-6.5);
\draw[densely dotted] (3+10,.5) -- (3+10,-6.5);

\draw[very thick] (-.5+10,0) -- (3.5+10,0);
\draw[densely dotted] (-.5+10,-1) -- (3.5+10,-1);
\draw[densely dotted] (-.5+10,-2) -- (3.5+10,-2);
\draw[densely dotted] (-.5+10,-3) -- (3.5+10,-3);
\draw[densely dotted] (-.5+10,-4) -- (3.5+10,-4);
\draw[densely dotted] (-.5+10,-5) -- (3.5+10,-5);
\draw[densely dotted] (-.5+10,-6) -- (3.5+10,-6);

\node at (0+10,.5) [above]{$V_1$};
\node at (1+10,.5) [above]{$V_2$};
\node at (2+10,.5) [above]{$V_3$};
\node at (3+10,.5) [above]{$V_4$};

\node at (-.5+10,0) [left]{$H_1$};
\node at (-.5+10,-1) [left]{$H_2$};
\node at (-.5+10,-2) [left]{$H_3$};
\node at (-.5+10,-3) [left]{$H_4$};
\node at (-.5+10,-4) [left]{$H_5$};
\node at (-.5+10,-5) [left]{$H_6$};
\node at (-.5+10,-6) [left]{$H_7$};

%%%%%%%%%%%%%%%%%%%%%%%%%%%%%%%%%%%%%%%%%%%%%%%%%%%%%%%%%%%%%%%%%%

\draw[fill=black](0+20,-1) circle (5 pt);
\draw[fill=black](1+20,-1) circle (5 pt);
\draw[fill=black](2+20,-1) circle (5 pt);
\draw[fill=black](3+20,-1) circle (5 pt);

\draw[fill=black](0+20,-2) circle (5 pt);
\draw[fill=black](1+20,-2) circle (5 pt);
\draw[fill=black](2+20,-2) circle (5 pt);
\draw[fill=black](3+20,-2) circle (5 pt);

\draw[fill=black](0+20,-3) circle (5 pt);
\draw[fill=black](1+20,-3) circle (5 pt);

\draw[fill=black](0+20,-4) circle (5 pt);
\draw[fill=black](1+20,-4) circle (5 pt);

\draw[fill=black](0+20,-5) circle (5 pt);
\draw[fill=black](1+20,-5) circle (5 pt);

\draw[fill=black](0+20,-6) circle (5 pt);
\draw[fill=black](1+20,-6) circle (5 pt);

\draw[fill=black](1+20,0) circle (5 pt);
\draw[fill=black](2+20,0) circle (5 pt);
\draw[fill=black](3+20,0) circle (5 pt);
\draw[fill=black](4+20,0) circle (5 pt);

\draw[fill=black](1+20,-1) circle (5 pt);
\draw[fill=black](2+20,-1) circle (5 pt);
\draw[fill=black](3+20,-1) circle (5 pt);
\draw[fill=black](4+20,-1) circle (5 pt);

\draw[fill=black](1+20,-2) circle (5 pt);
\draw[fill=black](2+20,-2) circle (5 pt);

\draw[fill=black](1+20,-3) circle (5 pt);
\draw[fill=black](2+20,-3) circle (5 pt);

\draw[fill=black](1+20,-4) circle (5 pt);
\draw[fill=black](2+20,-4) circle (5 pt);

\draw[fill=black](1+20,-5) circle (5 pt);
\draw[fill=black](2+20,-5) circle (5 pt);

\draw[very thick] (0+20,.5) -- (0+20,-6.5);
\draw[densely dotted] (1+20,.5) -- (1+20,-6.5);
\draw[densely dotted] (2+20,.5) -- (2+20,-6.5);
\draw[densely dotted] (3+20,.5) -- (3+20,-6.5);
\draw[densely dotted] (4+20,.5) -- (4+20,-6.5);

\draw[very thick] (-.5+20,0) -- (4.5+20,0);
\draw[densely dotted] (-.5+20,-1) -- (4.5+20,-1);
\draw[densely dotted] (-.5+20,-2) -- (4.5+20,-2);
\draw[densely dotted] (-.5+20,-3) -- (4.5+20,-3);
\draw[densely dotted] (-.5+20,-4) -- (4.5+20,-4);
\draw[densely dotted] (-.5+20,-5) -- (4.5+20,-5);
\draw[densely dotted] (-.5+20,-6) -- (4.5+20,-6);

\node at (0+20,.5) [above]{$V_1$};
\node at (1+20,.5) [above]{$V_2$};
\node at (2+20,.5) [above]{$V_3$};
\node at (3+20,.5) [above]{$V_4$};
\node at (4+20,.5) [above]{$V_5$};

\node at (-.5+20,0) [left]{$H_1$};
\node at (-.5+20,-1) [left]{$H_2$};
\node at (-.5+20,-2) [left]{$H_3$};
\node at (-.5+20,-3) [left]{$H_4$};
\node at (-.5+20,-4) [left]{$H_5$};
\node at (-.5+20,-5) [left]{$H_6$};
\node at (-.5+20,-6) [left]{$H_7$};

\node at (2,2.7) {$J$};
\node at (2+10,2.7) {$K'$};
\node at (2+20,2.7) {$J\cap K'$};

\end{tikzpicture}
\]
\normalsize

We can see from arrangement of unions of lines and points representing $J\cap K'$ is: \small
\begin{align*}
\mathbb{W}_{3} = \{&H_{2}\times V_{2},H_{2}\times V_{3},H_{2}\times V_{4},H_{2}\times V_{5},H_{3}\times V_{2},H_{3}\times V_{3},H_{3}\times V_{4},H_{4}\times V_{2}, \\
&H_{4}\times V_{3},H_{5}\times V_{2},H_{5}\times V_{3},H_{6}\times V_{2},H_{6}\times V_{3},H_{7}\times V_{2},H_{1},V_{1}\}.
\end{align*}
\normalsize
Then $\alpha_{\mathbb{X}_{\mathbb{W}_{3}}}=(4,3,2,2,2,1)$ and hence there will be 5 minimal generators which agrees with the cut number of this partition.  Let $\Lambda_{3}=H_{1}V_{1}$ denote the products of lines in $\mathbb{W}_{3}$. Then proceeding as before, according to Theorem \ref{gens}, Remark \ref{drops} and Theorem \ref{product} these generators will be: 
\begin{eqnarray*}
    h_{1}&=&\Lambda_{3}V_{2}V_{3}V_{4}V_{5}, ~~h_{2}=\Lambda_{3}H_{2}V_{2}V_{3}V_{4},~~ h_{3}=\Lambda_{3}H_{2}H_{3}V_{2}V_{3},\\
    h_{4}&=&\Lambda_{3}H_{2}H_{3}H_{4}H_{5}H_{6}V_{2}, ~~~h_{5}=\Lambda_{3}H_{2}H_{3}H_{4}H_{5}H_{6}H_{7}.
    \end{eqnarray*}
    Note that for $J\cap K'$ the ``new" fifth generator $h_{5}$ is exactly what we saw in the proof of Theorem \ref{cuttheorem}, namely that $h_{5}=V_{1}\omega$.

So in summary we note that the partition corresponding to $J'\cap K$ has 4 cuts while the partition corresponding to $J\cap K'$ has 5 and that the number of minimal generators correspond to the number of cuts as described in the theorem.
\end{example}

\begin{corollary}\label{characterization_cor}
    Let $\mathcal{G}(I)$ be the standard set of generators of $I$ in the standard order. Then a partition of generators is a Betti splitting if and only if the partition is a 1-cut partition of $\mathcal{G}(I)$ with respect to the standard order.
\end{corollary}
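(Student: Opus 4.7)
The plan is to derive this corollary as an almost immediate consequence of the two preceding major results, namely Theorem \ref{splitting} and Theorem \ref{cuttheorem}, which together pin down the condition for a Betti splitting in two different but compatible ways. Since the corollary is a biconditional, I will handle both directions simultaneously by chaining equivalences.

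First, I would invoke Theorem \ref{splitting}, which tells us that for an ideal $I$ of a union of lines and ACM points in $\mathbb{P}^{1}\times\mathbb{P}^{1}$ partitioned as $I = J + K$ via its minimal generators, the splitting is a Betti splitting if and only if $J \cap K$ is principal, i.e., $|\mathcal{G}(J \cap K)| = 1$. This reduces the problem of characterizing Betti splittings to the problem of counting minimal generators of $J \cap K$.

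Next, I would apply Theorem \ref{cuttheorem}, which gives a combinatorial formula for this count: when the generators are ordered according to the standard order of Definition \ref{standard_order} and $\mathcal{G}(I) = A \sqcup B$ is an $r$-cut partition with respect to $\tau$, the intersection $J \cap K$ is minimally generated by exactly $r$ elements. Combining the two, the condition $|\mathcal{G}(J \cap K)| = 1$ from Theorem \ref{splitting} becomes exactly $r = 1$, i.e., the partition is a 1-cut partition of $\mathcal{G}(I)$ with respect to $\tau$, which is what the corollary asserts.

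I do not anticipate any genuine obstacles here, since the heavy lifting has already been done in the two cited theorems; the proof is essentially bookkeeping. The only small thing to be careful about is to note that the trivial $0$-cut partitions (where $A$ or $B$ is empty) are excluded from the discussion of splittings by the requirement that both $J$ and $K$ be nonzero in the definition of a Betti splitting, so the equivalence is between honest nontrivial partitions and 1-cut partitions. I would make this explicit in a short remark at the end of the proof so that the edge case is not missed.
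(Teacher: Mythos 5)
Your proposal is correct and matches the paper's own (very terse) proof, which simply cites Theorem \ref{splitting} and Theorem \ref{cuttheorem} and chains them exactly as you do. The extra remark about $0$-cut partitions being excluded by the nonzero requirement on $J$ and $K$ is a harmless and accurate bit of bookkeeping that the paper leaves implicit.
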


\begin{proof}
    This follows immediately from Theorem \ref{splitting} and Theorem \ref{cuttheorem}.
\end{proof}

\begin{corollary}\label{no_betti_point_splittings}
    A point splitting is never a Betti splitting.
\end{corollary}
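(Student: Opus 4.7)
The plan is to derive the corollary as a direct consequence of Corollary \ref{characterization_cor} by showing that the structural requirement of a point splitting is incompatible with being a 1-cut partition with respect to the standard order.

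First I would unpack what a point splitting forces on the partition. Suppose $I_{\mathbb{W}}=J+K$ is a point splitting coming from $\mathcal{G}(I_{\mathbb{W}})=A\sqcup B$, and without loss of generality take $J=\langle A\rangle$ to be the ideal of points. By \cite[Theorem 1.2]{GMR1992} (invoked inside the proof of Lemma \ref{no_split}), any ideal of points in $\mathbb{P}^{1}\times\mathbb{P}^{1}$ has exactly one minimal generator of bidegree $(a,0)$ and exactly one of bidegree $(0,b)$. Since the elements of $A$ are minimal generators of $\langle A\rangle$ (as they are part of a minimal generating set of $I_{\mathbb{W}}$), the set $A$ must contain a generator of bidegree $(\ast,0)$ and a generator of bidegree $(0,\ast)$.

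Next I would identify these generators inside the standard order $\tau$. By Theorem \ref{gens} (or equivalently the factored form in Theorem \ref{product} combined with Theorem \ref{gens=>acm}), the only generator of $I_{\mathbb{W}}$ of bidegree $(\ast,0)$ is the pure-horizontal product $H_{A_{1}}\cdots H_{A_{h}}$, and the only generator of bidegree $(0,\ast)$ is the pure-vertical product $V_{B_{1}}\cdots V_{B_{v}}$. Looking at Definition \ref{standard_order}, under the lexicographic order with $y_{0}>y_{1}>x_{0}>x_{1}$ the pure-vertical product is the first entry $\tau_{1}$ of the tuple $\tau$ (its leading term has the most $y$'s), while the pure-horizontal product is the last entry $\tau_{n}$. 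Hence $A$ must contain both $\tau_{1}$ and $\tau_{n}$.

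Now I would invoke Corollary \ref{characterization_cor}: a partition is a Betti splitting if and only if it is a 1-cut partition with respect to $\tau$. In a 1-cut partition one part consists of $\{\tau_{1},\ldots,\tau_{j}\}$ and the other of $\{\tau_{j+1},\ldots,\tau_{n}\}$ for some $1\leq j<n$, so $\tau_{1}$ and $\tau_{n}$ necessarily lie in different parts. This contradicts the conclusion above that both $\tau_{1}$ and $\tau_{n}$ belong to $A$. Therefore the given point splitting is not a 1-cut partition and hence, by Corollary \ref{characterization_cor}, not a Betti splitting.

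There is no real technical obstacle here; the whole argument is essentially a bookkeeping check that the two extreme generators in the standard order are forced to stay together in a point splitting but are forced apart in a 1-cut partition. The only subtle point worth flagging in the writeup is the (clean) observation that $\tau_{1}$ and $\tau_{n}$ are the unique generators of their respective bidegree types, so that the appeal to \cite[Theorem 1.2]{GMR1992} pins them down individually rather than just up to bidegree.
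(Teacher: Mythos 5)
Your proof is correct and takes essentially the same approach as the paper; the paper compresses it into a single sentence ("In the standard order the required generators will never be in the same 1-cut partition"), and your writeup spells out exactly that: a point splitting forces the two extreme generators $\tau_1 = V_{B_1}\cdots V_{B_v}$ and $\tau_n = H_{A_1}\cdots H_{A_h}$ to land in the same part (via the GMR bidegree criterion), while a 1-cut partition forces them apart, contradicting Corollary~\ref{characterization_cor}.
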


\begin{proof}
    In the standard order the required generators will never be in the same 1-cut partition.
\end{proof}

%%%%%%%%%%%%%%%%%%%%%%%%%%%%%%%%%%%%%%%%%
\subsection*{Acknowledgments}
%%%%%%%%%%%%%%%%%%%%%%%%%%%%%%%%%%%%%%%%%

Part of the work on this project was
carried out at the Fields Institute in Toronto, Canada
as part of the ``Thematic Program on Commutative Algebra
and Applications''.   All of the authors thank
Fields for funding and for 
providing a wonderful environment in which to work. 
Guardo's research is partially supported by Gnsaga of Indam. Guardo and Keiper's research is partially supported by Progetto Piaceri 2024-26, Università di Catania and by PRIN 2022, “$0$-dimensional schemes, Tensor Theory and applications” – funded by the European Union Next Generation EU, Mission 4, Component 2 -- CUP: E53D23005670006.
Van Tuyl’s research is supported by NSERC Discovery Grant 2024-05299.
%%%%%%%%%%%%%%%%%%%%%%%%%%%%%%%%%%%%%%%%%%%%%%%%%%%%%%%%%

\bibliographystyle{abbrv}
\bibliography{references}

\end{document}